\documentclass{amsart}

\usepackage[utf8]{inputenc}
\usepackage{url}
\usepackage{hyperref}
\usepackage[utf8]{inputenc}
\usepackage[margin=1in]{geometry} 
\usepackage{amsmath,amsthm,amssymb, graphicx, multicol, array}
 \usepackage{mathtools}
 \usepackage{mathrsfs}
\usepackage{tikz-cd}
\usepackage{yfonts}
\usepackage{qtree}
\usepackage{ dsfont }
\usepackage{ upgreek }
\usepackage{hyperref}
\usepackage{enumitem}
\usepackage{tikz}
\usepackage{float}
\usepackage{graphicx}
\usepackage[backend=biber,
style=alphabetic,]{biblatex}
\addbibresource{biblio.bib}

\newcommand{\Z}{\mathbb{Z}}

\newcommand{\Q}{\mathbb{Q}}
\newcommand{\C}{\mathbb{C}}

\newcommand{\G}{\mathbb{G}}
\newcommand{\Pj}{\mathbb{P}}

\newcommand{\mc}{\mathcal}

\newcommand{\sslash}{\mathbin{/\mkern-6mu/}}

\newcommand{\proj}{\text{Proj}}

\newcommand{\diag}{\text{diag}}

\numberwithin{equation}{section}
\newtheorem{prop}{Proposition}[section]
\newtheorem{thm}[prop]{Theorem}
\newtheorem{lemma}[prop]{Lemma}
\newtheorem{cor}[prop]{Corollary}

\usepackage[utf8]{inputenc}

\theoremstyle{definition}
\newtheorem{defn}[prop]{Definition}

\newtheorem{rmk}[prop]{Remark}

\title{
Compact Moduli Spaces of Marked Cubic Plane Curves
}
\author{Aaron Goodwin}
\address{
{\small Department of Mathematics,
University of California, Riverside,
900 University Ave.
Riverside, CA 92521
Skye Hall}
}
\email{agood032@ucr.edu }

\begin{document}

\begin{abstract}
We study compactifications of the moduli space of plane cubic curves marked by \(n\) labeled points up to projective equivalence via Geometric Invariant Theory (GIT). Specifically, we provide a complete description of the GIT walls and show that the moduli-theoretic wall-crossing can be understood through analysis of the singularities of the plane curves and the position of the points.
\end{abstract}

\maketitle

\section{Introduction}\label{sec:intro}
Compactifications of the moduli space of smooth curves of genus $g \geq 0$ with $n$ distinct marked points are a central theme in algebraic geometry. We know nowadays that there are many such compactifications whose boundary are controlled by a variety of combinatorial structures 
\cite{hassett2002modulispacesweightedpointed},\cite{smyth2009classificationmodularcompactificationsmoduli}, \cite{Bozlee_2023}, \cite{pandharipande1995geometricinvarianttheorycompactification},  \cite{Giansiracusa_2010}, 
\cite{Jen13}, \cite{giansiracusa2016gitcompactificationsm0nflips}. For example, the boundary of the Deligne-Mumford compactifications $\overline{M}_{g,n}$ are understood via dual graphs and tropical geometry \cite{caporaso2018recursivecombinatorialaspectscompactified} whereas the Hassett compactifications $\overline{M}_{g, \mathbf{w}}$ use weights to determine which points can collide in the boundary \cite{hassett2002modulispacesweightedpointed}. There have been recent efforts to understand all modular compactifications, in the sense of  \cite{smyth2009classificationmodularcompactificationsmoduli}, and to describe the combinatorial gadgets that govern their boundary \cite{Bozlee_2023}.

In this work, we focus on plane curves of degree $d \geq 0$ marked by $n$ weighted points. Our work follows the previous work of Giansiracusa and Simpson for genus zero \cite{Giansiracusa_2010} and Jensen for the case of one point \cite{Jen13}. 
We describe our set up next: For simplicity we work over $\C$, although we expect essentially all of the results to hold over any algebraically closed field. Let  $\mc{C}_{n,d}$ be the parameter space whose points correspond to tuples $(C, p_1, \dots , p_n)$ of a degree $d$ curve $C \subset \Pj^2$ and $n$ closed points of $C$. When we restrict to the locus $\mc{C}_{n,d}^{gen}$ of smooth curves marked with generic points, then there is a quasi-projective variety $\mc{C}_{n,d}^{gen} \slash SL(3)$ whose points correspond to isomorphism classes of pairs $(C, p_1, \ldots p_n)$ (Lemma \ref{lemma:semistable_generic}). We use variation of GIT (VGIT) quotients to construct compactifications of $\mc{C}_{n,d}^{gen} \slash SL(3)$, which we denote by $\overline{M}_{g(d), \mathbf{w}}^{git},$ where $g(d) = \frac{(d-1)(d-2)}{2}$ is the genus of a degree $d$ plane curve. The construction depends on a line bundle $L$ obtained from the Hilbert scheme of curves and the $n$ points, as described in Section \ref{subsec:computing}. 
We construct such line bundles $L$ from a tuple of non-negative integers $(\gamma, w_1, \dots, w_n) \in \mathbb{Z}^{n+1}_{\geq 0}$. As in similar settings, we will see that $\gamma$ can be interpreted as a weight on the curve and the $w_i$ as weights on each point $p_i$. Our first theorem describes the line bundles that give rise to a well-defined coarse moduli space of marked plane curves. The \textit{$SL(3)-$ample cone} is the set of classes of $SL(3)$-linearized line bundles $\mathbf{w} := [L]$ in $NS^{SL(3)}(\mc{C}_{n,d})_\Q$ such that the locus of $L$ semi-stable marked curves is non-empty, $\mc{C}_{n,d}^{ss}(L) \neq \emptyset$  (See Definition \ref{def:stability}). We study $\Lambda(\mc{C}_{n,d})$, the subset of the $SL(3)$-ample cone which lies in the $\gamma w_1 \dots  w_n$ hyperplane, consisting of line bundles generated by $\gamma, w_1, \dots , w_n$ as in Section \ref{subsec:computing}. In Lemma \ref{lemma:Lambda} we show that $\Lambda(\mc{C}_{n,d})$ can be identified with the $SL(3)$-ample cone.

\newtheorem*{thm:ample_cone}{Theorem \ref{thm:ample_cone}}
\begin{thm:ample_cone}
  The cone $\Lambda(\mc{C}_{n,d})$ is 

\begin{align*}
 \bigg\{
(\gamma, w_1, \ldots, w_n) \in \Q^{n+1}
\; \bigg| \;
   w_i \leq \frac{W+\gamma(2d-3)}{3} , \ w_i \leq \frac{W+ \gamma(d-2)}{2}, \ w_i+w_j \leq \frac{2W+ \gamma d}{3}, \ 
   0 < w_i, \ 0 < \gamma
 \bigg\},
\end{align*}
where $W = \sum_1^n w_i$ denotes the total weight of the points.
\end{thm:ample_cone}

Our results generalize \cite[Proposition 4.2]{Jen13}, which solves the problem for a point and arbitrary degree, and \cite[Theorem 1.1]{Giansiracusa_2010}, which solves the case of $d=2$ and arbitrary $n$.
We prove Theorem \ref{thm:ample_cone} in Section \ref{subsec:outerwalls}.

For our second result, we focus on degree $d=3$ and classify all GIT quotients for marked cubic curves. This is possible because we can list all degenerations of a plane cubic, whereas a similar result for arbitrary degree is out of reach due to the lack of such a classification. 
To describe our theorem,  we recall the wall and chamber decomposition of the $SL(3)$-ample cone (\cite{DH98}, \cite{Tha96}): The $SL(3)-$ample cone is divided  by codimension $1$, locally polyhedral \textit{GIT walls} into a finite set of convex \textit{chambers} such that the semistable loci $\mc{C}_{n,d}^{ss}(L)$ and $\mc{C}_{n,d}^{ss}(L')$ are equal if and only if $L$ and $L'$ lie in the same chamber \cite[Theorem 2.3]{Tha96}. To each wall we associate a closed $SL(3)$ orbit which is strictly semi-stable on the wall. The plane curves with closed orbit associated to each wall have positive dimensional stabilizer \cite[Definition 3.3.1]{DH98}. In the cubic case $d=3,$ such curves belong to a relatively small list; see Lemma \ref{lemma:CubicPositStbz}. 
The proof of Theorem \ref{thm:GIT_walls} is given in Section \ref{sec:innerwalls}. We illustrate the chamber decomposition for $n=2$ in Figure \ref{fig:C_{2,3}}.

\newtheorem*{thm:GIT_walls}{Theorem \ref{thm:GIT_walls}}
\begin{thm:GIT_walls}
    For degree $d=3$, there are four types of inner walls of $\Lambda(\mc{C}_{n,3})$, described below. The walls are segments of hyperplanes defined in Table \ref{table:GITwalls}. If the hyperplane segment intersects the interior of $\Lambda(\mc{C}_{n,3})$ then it is a GIT wall and all inner GIT walls are of this form.
    \begin{itemize}
        \item[(i)] For each nonempty, proper subset $I \subset [n]$ there is a hyperplane segment $W(3A_1, I)$ associated to the union of three non-concurrent lines $C(3A_1)$ with points $p_i, i \in I $ supported at a node.  
        \item[(ii)] For each proper subset $I \subset [n]$ there is a hyperplane segment $W(A_2,I)$ associated to the cuspidal curve $C(A_2)$ with points $p_i, i \in I $ supported at the cusp.
        \item[(iii)]  For each ordered pair of disjoint subsets $I,J \subset [n]$ there is a hyperplane segment $W(A_3, I, J)$ associated to  the union of a conic with a tangent line $C(A_3)$ with points $p_i, i \in I $ supported at the tacnode, and $p_j, j \in J $ supported at the unique linear component of the curve.
        \item[(iv)] For each subset $I \subset [n]$ with $|I| \leq n-3$ there is a hyperplane segment $W(D_4, I)$ associated to the cone over three points $C(D_4)$ with points $p_i, i \in I $ supported at the singularity. 
    \end{itemize}
\end{thm:GIT_walls}

\begin{table}[!htbp]
\renewcommand{\arraystretch}{1.5}
 \begin{tabular}{|p{2cm}| p{5cm}|p{8cm}|}
\hline 
Hyperplane Segment & Hyperplane &  Boundary Conditions \\
\hline 

$W(3A_1, I)$ & $ \sum_{i \in I} w_i - \frac{1}{2} \sum_{j \notin I} w_j = 0$ & $w_m \leq  \frac{1}{2} \sum_{j \notin I} w_j + \gamma, \ \text{for each} \ m \notin I.$ \\

\hline
$W(A_2, I)$ &  $  \sum_{i \in I} w_i  - \frac{4}{5} \sum_{j \notin I} w_j  + \frac{3}{5}\gamma = 0$  & $\sum_{i \in I} w_i \leq \frac{1}{2} \sum_{j \notin I} w_j$ . \\

\hline
$W(A_3, I, J)$ &  $ \sum_{i \in I} w_i -  \sum_{k \notin I \cup J} w_k + \gamma = 0$  & $ \sum_{i \in I} w_i - \gamma \leq \sum_{j \in J} w_j \leq \sum_{i \in I} w_i + 2 \gamma$ . \\

\hline
$W(D_4, I)$ &  $ \sum_{i \in I} w_i - \frac{1}{2} \sum_{j \notin I} w_j + \frac{3}{2} \gamma = 0$ &  There exists a partition $B_1 \sqcup B_2 \sqcup B_3$ of $\{1, \dots , n\} \ \backslash \  I$ such that $  \sum_{j \in B_k} w_j  \leq \sum_{i \in I} w_i + 2 \gamma $ for each $ k \in \{1,2,3\}$ .   \\

\hline
\end{tabular}
\vspace{0.5cm}
\caption{
The walls in Theorem \ref{thm:GIT_walls} are given by intersecting the vanishing locus of a hyperplane with a set of half-planes that cut out the boundary of the wall. The above hyperplane segments are GIT walls if they intersect $\Lambda(\mc{C}_{n,3})$.}
\label{table:GITwalls}
\end{table}

We illustrate Theorem \ref{thm:GIT_walls} for plane cubics with two marked points in Corollary \ref{cor:2pts} and Figure \ref{fig:C_{2,3}}.

\begin{figure}[htbp]
    \centering
    \includegraphics[scale = 0.55]{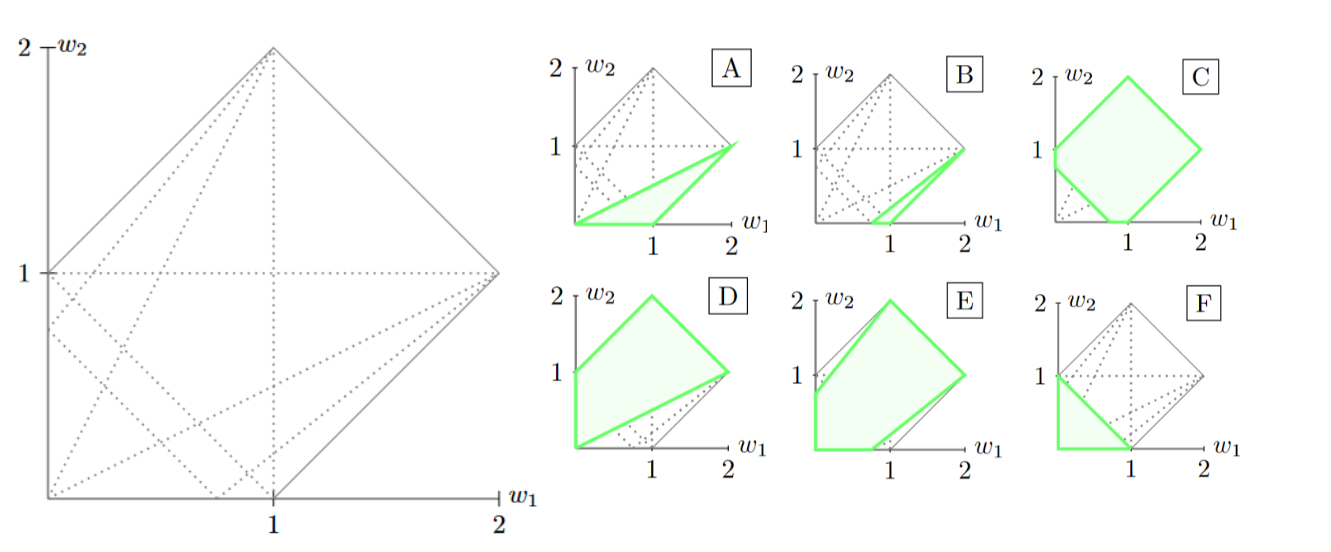} 
    \caption{(Left) The linearization polytope $\Delta(\mc{C}_{2,3})$, which is an intersection of $\Lambda(\mc{C}_{2,3})$ with the hyperplane $\{\gamma=1\},$ as in Section \ref{sec:2pts}. (Right) The locus of line bundles at which marked curves $(C,p_1,p_2)$ with the following pathologies are stable: A) $p_2$ supported at a node. B) $p_2$ supported at a cusp. C) $C$ is cuspidal. D) $p_1$ is supported on a linear component of $C$. E) $p_1$ and $p_2$ are both supported at inflection points. F) $p_1$ and $p_2$ collide. }
    \label{fig:C_{2,3}}
\end{figure}


Next, we discuss the wall-crossing phenomenon for our problem. We recall that given an inner wall, there exists a linearization $L_0$ at the wall, linearizations $L_{+}$ and $L_{-}$ at each chambers adjacent to the wall, and configurations of marked curves that are stable for $L_{+}$, strictly semistable for $L_0$, and unstable for $L_{-}$.  We describe the marked curves that transition from stable to unstable when crossing each GIT wall from Theorem  \ref{thm:GIT_walls}. For this purpose, we recall that each of our walls is associated to a curve $C(T)$ and one (or two) subsets of $[n] := \{1, \dots , n\}$. Let $S(T, I, +)$ be the general curve that satisfies two conditions: it is stable when the value of the linear function defining $W(T,I)$ (Table \ref{table:GITwalls}) is $0< \epsilon \ll 1$ and the boundary conditions are satisfied, and it is unstable whenever the value of the linear function is negative. Conversely, $S(T, I, -)$ denotes the general curve that is stable when the value of the linear function defining $W(T,I)$ is $-1 \ll \epsilon < 0$ and the boundary conditions are satisfied, and is unstable whenever the value of the linear function is positive. For line bundles $L_0$ at the wall, the GIT quotient $\overline{M}^{git}_{g(d),[L_0]}$ identifies the $SL(3)$ orbits of strictly semi-stable marked curves whose closures intersect. Each of these strictly semi-stable curves contains a unique closed orbit in its orbit closure. These closed orbits correspond to the curves $C(T)$ described above, with points $p_i$ such that the marked curve $S(T,I,0) := (C(T), p_1, \dots, p_n)$ has positive dimensional stabilizer. $S(T,I,0)$, is a common degeneration of $S(T,I,-)$ and $S(T,I,+)$.


\newtheorem*{thm:wallcrossing}{Theorem \ref{thm:wallcrossing}}
\begin{thm:wallcrossing}
With notation as above, the changes of stability and semi-stability for marked curves when crossing the walls from Theorem  \ref{thm:GIT_walls} are described as follow: 

\begin{itemize}
\item[(Case i)] $S(3A_1,I,0)$ is a union of three non-concurrent lines. The marked points indexed by $I$ coincide at the nodal intersection of two lines and the remaining marked points lie on the third line.
\begin{itemize}
    \item[$\bullet$] $S(3A_1,I,-)$ is an irreducible nodal curve with the marked points indexed by $I$ coinciding at the  $A_1$ singularity and the remaining marked points in general position.
    \item[$\bullet$] $S(3A_1,I,+)$ is the union of a conic and a transverse line, with marked points $p_j$ lying in general position on the linear component for $j \notin I.$
\end{itemize}
\item[(Case ii)] $S(T,I,0)$ is a plane cubic with a cuspidal singularity. The marked points indexed by $I$ coincide at the $A_2$ singularity and the others coincide at the curves unique inflection point.
\begin{itemize}
    \item[$\bullet$] $S(A_2,I, -)$ is an irreducible cuspidal cubic curve with marked points indexed by $I$ coinciding at the $A_2$ singularity.
    \item[$\bullet$] $S(A_2,I,+)$ is a smooth cubic curve with marked points $p_j$ coinciding at an inflection point for $j \notin I$. 
\end{itemize}
\item[(Case iii)] $S(A_3,I,J,0)$ is the union of a conic and a line which intersect at a tacnode. The marked points indexed by $I$ coincide at the $A_3$ singularity, those indexed by $J$ lie on the linear component of the curve, and the rest lie on a single line in the plane which is also tangent to the conic.
    \begin{itemize}
      \item[$\bullet$] $S(A_3, I, J,-)$ is the union of a conic with a tangent line with marked points indexed by $I$ coinciding at the $A_3$ singularity and those indexed by $J$ lying on the tangent line.
     \item[$\bullet$] $S(A_3,I,J,+)$ is a smooth cubic curve $C$ with marked points $p_k$ coinciding for $k \notin I \cup J$ and the marked points indexed by $J$ coinciding at the transversal intersection of $C$ with the line tangent to $C$ at $p_k$.
    \end{itemize}
    \item[(Case iv)] $S(D_4,I,0)$ is a plane cubic with a $D_4$ singularity. The marked points indexed by $I$ coincide at the $D_4$ singularity. The marked points indexed by $B_1$, $B_2$, and $B_3$ coincide at $3$ points on the $3$ linear component of the curve, respectively, and these $3$ points are collinear.
   \begin{itemize}
    \item[$\bullet$] $S(D_4,I, -)$ is three concurrent lines with marked points indexed by $I$ coinciding at the $D_4$ singularity and the points indexed by $B_1$, $B_2,$ and $B_3$ lying on the three lines, respectively.
    \item[$\bullet$] $S(D_4,I, +)$ is a smooth cubic curve with the points indexed by $B_1$ coinciding at a point $q_1$, the points indexed by $B_2$ coinciding at a different point $q_2$, and the points indexed by $B_3$ coinciding at a third point on the line $\overline{q_1q_2}$.
    \end{itemize}
\end{itemize}
See Figure \ref{fig:WallCrossing} for an illustration of the walls.
\end{thm:wallcrossing}

\begin{figure}
    \centering
    \includegraphics[scale = 0.45]{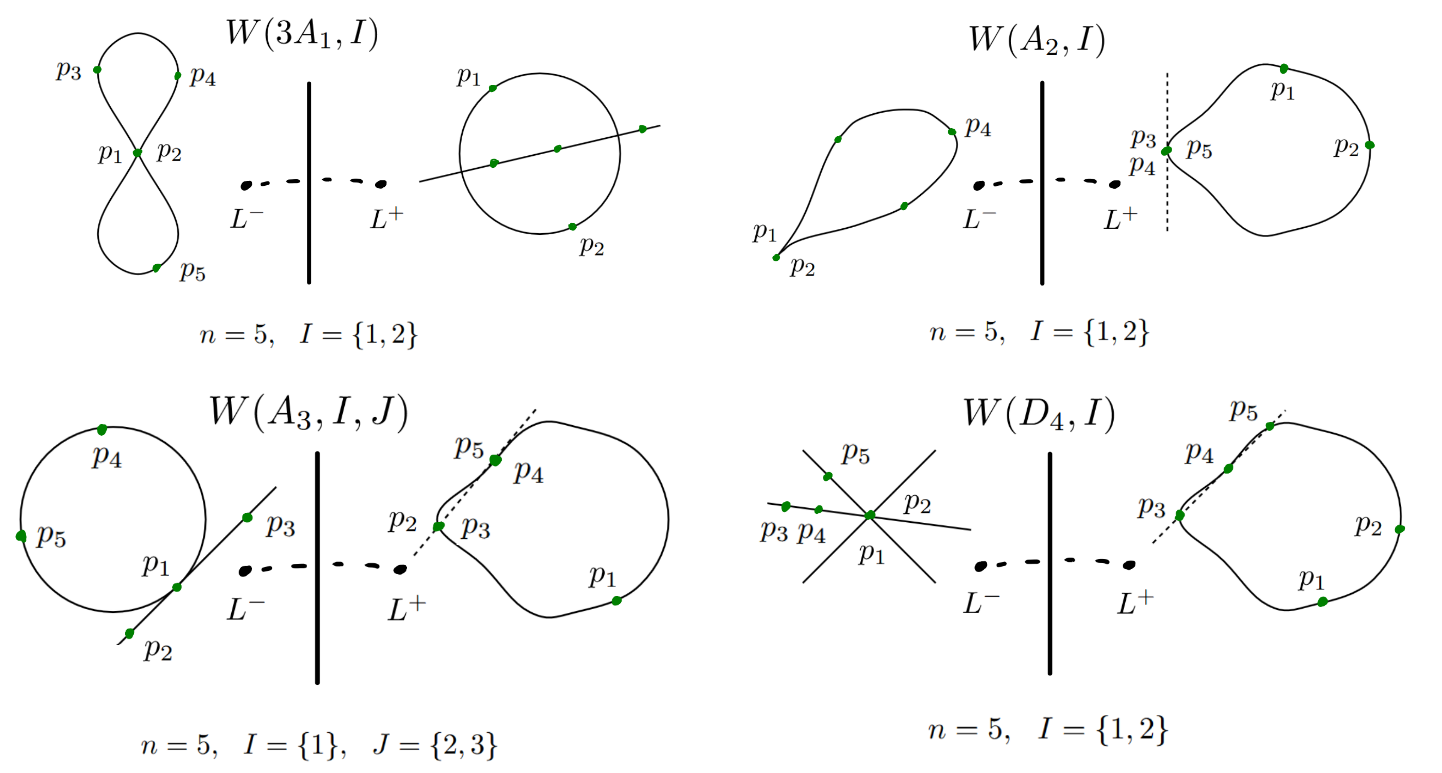}
    \caption{Illustration of the wall crossing behavior found in Theorem \ref{thm:wallcrossing}. The curves $S(T,I, \pm)$ are stable in the chamber containing $L^{\pm}$ and unstable in the chamber containing $L^{\mp}$. }
    \label{fig:WallCrossing}
\end{figure}
In Section \ref{sec:app} we consider applications of our results in how our compactifications relate to other moduli spaces.  In his thesis \cite{Laz06}, Radu Laza constructs a series of compactifications of the moduli space of \textit{degree $3$ pairs} consisting of a cubic curve and a line in $\Pj^2$  by taking the VGIT quotients of $\Pj(\Gamma(\Pj^2, \mc{O}(3))) \times \Pj(\Gamma(\Pj^2, \mc{O}(1))) $ by $SL(3)$, denoted $M^{1,3}_{pairs}(t)$ . He finds a VGIT chamber corresponding to the line bundle parameter $t= \frac{3}{2}- \epsilon$ such that $M^{1,3}_{pairs}(\frac{3}{2}- \epsilon)$ is the moduli space of pairs $(C,L)$ where $C$ has at worst isolated singularities of type $A_k$ and $L$ is a line intersecting $C$ transversely. Laza then considers the moduli spaces $M^{(1,3) lab}_{pairs}(t)$ of pairs of a plane cubic and a line, with labeled intersection. He uses a classical construction to prove that $M^{(1,3) lab}_{pairs}(\frac{3}{2}- \epsilon)$ is the coarse moduli space for cubic surfaces containing a marked Eckardt point and at worst $A_k$ singularities.
In our case, there is a chamber represented by $\mathbf{w}$ giving rise to a compact moduli space of plane cubics with two marked points $\overline{M}^{git}_{1, \mathbf{w}}$ and a map from $\overline{M}^{git}_{1, \mathbf{w}}$ to $M^{(1,3) lab}_{pairs}(\frac{3}{2}- \epsilon)$  given by taking $(C,p_1, p_2)$ to $(C, \overline{p_1p_2})$ and marking the intersection points of $C \cap \overline{p_1p_2}$ with respect to the ordering of $p_1$ and $p_2$.

\newtheorem*{cor:FromM2toRadu}{Corollary \ref{cor:FromM2toRadu}}
\begin{cor:FromM2toRadu}
    There exists an isomorphism
    \[
       \phi^{lab}:\mc{C}_{2,3}^s \slash_{L} SL(3) \rightarrow M^{(1,3)lab}_{pairs}\left(
       \frac{3}{2}- \epsilon
       \right) 
       \cong 
       \mathbb{P}(1,2,2,3)
    \]
    where $L$ is a line bundle corresponding to a vector $\mathbf{w}$ in the GIT chamber $\{ w_1 > 1, w_2 > 1, w_1 + w_2 < 3\} \subset \Delta(\mc{C}_{2,3})$.
\end{cor:FromM2toRadu}

Next, we discuss the relation between our compactifications of marked plane cubics and the moduli space of marked elliptic curves.  Recall that an elliptic curve is a curve of genus $1$ with a marked point \cite{Dol12}. If $(E,p)$ is an elliptic curve then the linear series $|3p|$ embeds $E$ in $\Pj^2$ as a cubic curve such that $p$ is an inflection point. For this reason, we are interested in marked cubics for which one of the marked points is specifically an inflection point. 
In Section \ref{sec:planecubicsM1n},  we construct a space $\mc{C}_{n,3}'$ which parametrizes tuples consisting of a plane cubic, $n$ marked points, and an $(n+1)^{th}$ marked inflection point. Forgetting the last $(n+1)^{th}$ point gives a $9:1$ cover $\mc{C}_{n,3}' \rightarrow \mc{C}_{n,3}$. We construct GIT quotients $\overline{M}^{git}_{1,\mathbf{w}+} := \mc{C}_{n,3}' \sslash_L SL(3)$ and show that they compactify $M_{1,n+1}$.
The use of the single slash for the quotient indicates that the stable and semi-stable loci coincide for $L$, and the GIT quotient is therefore a geometric quotient.

\newtheorem*{thm:M_1n}{Corollary \ref{thm:M_1n}}
\begin{thm:M_1n}
    $M_{1,n+1}$ is isomorphic to an open subset of $\mc{C}_{n,3}^{'s} \slash_L SL(3)$ for some $SL(3)-$linearized line bundle $L$.
\end{thm:M_1n}
To study the birational map from $\overline{M}_{1,n}$ to 
$\overline{M}^{git}_{1,\mathbf{w}+}$ 
is the focus of the author's ongoing work.

\subsection*{Acknowledgements}
The author would like to thank their advisor Patricio Gallardo for suggesting this problem and for his thoughtful guidance and support throughout. The author would also like to thank Jos\'e Gonz\'alez for his advice and his enthusiastic teaching. This work is partially supported by the National Science Foundation under Grant No. DMS-2316749 (PI: P Gallardo).

\tableofcontents

\section{Preliminaries}

\subsection{Geometric Invariant Theory}

Throughout this section $G$ will be reductive group acting algebraically on a projective variety $X$.

\begin{defn} A \textit{categorical quotient} is a variety $Y$ with a morphism $p:X \rightarrow Y$ such that $p$ is $G-$invariant and for any other $G-$invariant map $q:X \rightarrow Z$ there exists a unique map $r: Y \rightarrow Z$ such that $q = r \circ p$. A categorical quotient is called a \textit{geometric quotient} if furthermore:
\begin{enumerate}[label=\arabic*)]
    \item For any open subset $U \subset Y$, the homomorphism $p^{\#}: \mc{O}(U) \rightarrow \mc{O}(p^{-1}(U))$ is an isomorphism onto the subring $\mc{O}(p^{-1}(U))^G$ of $G-$invariant sections.
    \item $p$ is a surjective, open map.
    \item the fibers of $p$ are precisely the $G-$orbits of $X$.
\end{enumerate}
\end{defn}

Let $L$ be a $G-$linearized ample line bundle on $X$. We recall the semi-stable, stable, and unstable loci of $X$ with respect to $L$. Theorem \ref{theorem:GIT_quotients} describes a construction of geometric quotients from the stable locus and compactifications of these quotients from the semi-stable locus.

\begin{defn}\label{def:stability}
    \begin{enumerate}[label=\roman*)]
        \item $x \in X$ is called \textit{semi-stable} with respect to $L$ if there exists an $m>0$ and a $G-$invariant section $s \in \Gamma(X, L^{\otimes m})^G$ such that $s(x) \neq 0$. The set of semi-stable points, denoted $X^{ss}(L)$, is called the \textit{semi-stable locus}.
        \item $x \in X$ is called \textit{stable} with respect to $L$ if it is semi-stable, its $G$-orbit $G \cdot x$ is closed in $X^{ss}(L)$, and it has a finite stabilizer $G_x$. The set of stable points, denoted $X^s (L)$ is called the \textit{stable locus}.
        \item The \textit{unstable locus} $X^{us}(L)$ is the complement $X \ \backslash \  X^{ss}(L)$.
        \item The \textit{strictly semi-stable} locus $X^{sss}(L)$ is the complement $X^{ss}(L) \ \backslash \ X^s(L)$.
    \end{enumerate}
\end{defn}

\begin{thm}\label{theorem:GIT_quotients}
    For a given $G-$linearized line bundle $L$ on $X$, the semi-stable locus $X^{ss}$ is an open subset of $X$. The stable locus $X^s$ is an open subset of $X^{ss}$.
    
    \begin{enumerate}[label=\arabic*)]
        \item There exists a categorical quotient $\pi: X^{ss}(L) \rightarrow X^{ss}(L) \sslash G := \proj \bigoplus_{m\geq 0} \Gamma(X, L^{\otimes m})^G $. This quotient is a projective variety.
        \item $\pi$ restricts to a geometric quotient of the stable locus $X^s (L) \rightarrow X^s (L) \slash G := \pi(X^s(L)) $.
    \end{enumerate}
    
\end{thm}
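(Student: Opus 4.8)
The plan is to reduce the global statement to the affine case via the $\proj$ construction and then invoke the core affine GIT theorem for reductive groups. First I would set $R := \bigoplus_{m \geq 0} \Gamma(X, L^{\otimes m})$; since $L$ is ample and $X$ is projective, $R$ is a finitely generated graded $k$-algebra with $X \cong \proj R$. Because $G$ is reductive, the Hilbert--Nagata finiteness theorem guarantees that the invariant subring $R^G = \bigoplus_{m \geq 0} \Gamma(X, L^{\otimes m})^G$ is again finitely generated, so that $X \sslash G := \proj R^G$ is a genuine projective variety; as $R^G_0 = k$, it is projective over $\Spec k$.

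Next I would construct $\pi$ and verify openness and the quotient property chart by chart. The inclusion $R^G \hookrightarrow R$ induces a dominant rational map $\proj R \dashrightarrow \proj R^G$ whose locus of definition is exactly the set of points where some positive-degree invariant is nonzero, which is precisely the definition of $X^{ss}(L)$. For each homogeneous $s \in R^G_m$ with $m > 0$, the nonvanishing locus $X_s$ is open and, because $L$ is ample, affine; hence $X^{ss} = \bigcup_s X_s$ is open and $\pi \colon X^{ss} \to \proj R^G$ is a genuine morphism. Over the basic open affine $\Spec (R^G)_{(s)}$ the preimage is $X_s = \Spec R_{(s)}$, and the identity $(R_{(s)})^G = (R^G)_{(s)}$ (invariants commute with localization at an invariant element and with passage to degree zero) shows that $\pi$ restricts on each chart to the affine quotient $\Spec R_{(s)} \to \Spec (R_{(s)})^G$.

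The technical heart is the affine statement: for a reductive $G$ acting on an affine $\Spec A$, the morphism $\Spec A \to \Spec A^G$ is a good categorical quotient. Here reductivity enters through the Reynolds operator $A \to A^G$, an $A^G$-linear projection that allows me to separate any two disjoint $G$-invariant closed subsets by invariant functions; this in turn yields surjectivity, the fact that $\pi$ carries invariant closed sets to closed sets, and the universal property defining a categorical quotient. Gluing the affine charts then gives assertion (1). I expect this affine good-quotient result, together with the finite generation it rests on, to be the main obstacle, although in the present setting both are standard and may be cited from Mumford's GIT or Newstead's notes.

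Finally, for assertion (2) I would show that $X^s$ is open in $X^{ss}$ and that $\pi|_{X^s}$ is geometric. Openness follows from the characterization that $x$ is stable iff it lies in some $X_f$ with $f \in R^G_m$, $m > 0$, on which $G$ acts with all orbits closed and all stabilizers finite; equivalently $X_f \to X_f \sslash G$ is already geometric, so $X^s = \bigcup X_f$ is open and $\pi(X^s)$ is open in $Y$. Each fiber of $\pi$ contains a unique closed orbit, and on the stable locus every orbit is closed of dimension $\dim G$, so over $\pi(X^s)$ each fiber is a single $G$-orbit, giving condition (3) of a geometric quotient, while the local description $(R_{(f)})^G = (R^G)_{(f)}$ supplies conditions (1) and (2). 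The subtlety is verifying that the closed-orbit and finite-stabilizer conditions cut out an open set and that the fibers coincide exactly with orbits, which is where upper semicontinuity of stabilizer dimension and the uniqueness of the closed orbit in each fiber do the essential work.
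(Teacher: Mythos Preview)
Your sketch is correct and follows the standard route to the main GIT theorem, but the paper does not actually prove this result: its entire proof is a one-line citation to \cite{Dol03}, Chapter 9, Theorem 8.1 and Proposition 8.1. So there is nothing to compare at the level of argument; you have supplied a genuine proof outline where the paper only gives a reference.

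What you have written is essentially the argument one finds in the cited source (and in Mumford's GIT): finite generation of $R^G$ via Hilbert--Nagata, construction of $\pi$ by gluing affine good quotients over the opens $X_s$, and passage to the stable locus via semicontinuity of stabilizer dimension and closedness of orbits. If you wanted to match the paper exactly you would simply cite the reference, but your expanded sketch is sound and would serve if a self-contained account were required.
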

\begin{proof}
    \cite{Dol03} Chapter 9 Theorem 8.1 and Proposition 8.1.
\end{proof}

The reader will notice that the GIT quotients in Theorem \ref{theorem:GIT_quotients} depend on the choice of a $G$-linearization on $X$. If $G$ has a trivial character group and $\text{Pic}(G)$ is trivial, then each ample line bundle has at most one $G$-linearization, see \cite[Section 7]{Dol03}. However,  the quotients can change when $L$ varies in $\text{Pic}(X)$. Moreover, there is a wall-chamber decomposition associated to the different isomorphism classes of GIT quotients. We recall the relevant facts.

\begin{prop}
    If $L$ is an ample linearization, then the semi-stable locus $X^{ss}(L)$ and the quotient $X^{ss}(L) \sslash G$ depend only on the $G-$algebraic equivalence class of $L$ in $NS^G(X)$.
\end{prop}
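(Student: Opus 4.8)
The plan is to reduce everything to the Hilbert--Mumford numerical criterion and to show that the numerical invariant governing semistability depends only on the class $[L] \in NS^G(X)$. Fix two ample $G$-linearizations $L$ and $L'$ with $[L] = [L']$ in $NS^G(X)$; since algebraic equivalence implies numerical equivalence and ampleness is a numerical property (Kleiman's criterion), $L'$ is again ample, so both define genuine GIT quotients as in Theorem \ref{theorem:GIT_quotients}. For $X$ projective and the linearization ample, Mumford's criterion characterizes the semistable locus purely numerically: writing $x_0 = \lim_{t \to 0} \lambda(t)\cdot x$ for a one-parameter subgroup $\lambda \colon \G_m \to G$, and letting $\mu^L(x,\lambda)$ be minus the weight $r$ of the induced $\G_m$-action on the fiber $L_{x_0}$ (Mumford's normalization), one has $x \in X^{ss}(L)$ if and only if $\mu^L(x,\lambda) \ge 0$ for every $\lambda$ (and $x \in X^s(L)$ with strict inequality for all nontrivial $\lambda$). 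Thus it suffices to prove that $\mu^L(x,\lambda)$ is unchanged when $L$ is replaced by an algebraically equivalent linearization.

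First I would record that, for fixed $(x,\lambda)$, the assignment $L \mapsto \mu^L(x,\lambda)$ is additive in $L$, hence defines a group homomorphism $\mathrm{Pic}^G(X) \to \Z$; it therefore remains only to check that this homomorphism kills the subgroup of $G$-algebraically trivial linearizations. Suppose $M \in \mathrm{Pic}^G(X)$ is $G$-algebraically equivalent to $\mathcal{O}_X$, witnessed by a connected base $T$ with points $t_0,t_1$ and a $G$-linearized line bundle $\mathcal{M}$ on $X \times T$ (with $G$ acting trivially on $T$) restricting to $\mathcal{O}_X$ over $t_0$ and to $M$ over $t_1$. Because $\lambda$ and $G$ act trivially on $T$, the fixed point $x_0$ is constant in the family, and the weight of $\G_m$ on $(\mathcal{M}_t)_{x_0}$ defines an integer-valued function $T \to \Z$. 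Since the weights of a $\G_m$-action form the discrete group $\Z$ and vary algebraically in the family, this function is locally constant; as $T$ is connected it is constant, so $\mu^M(x,\lambda) = \mu^{\mathcal{O}_X}(x,\lambda) = 0$. I expect this locally-constant-weight step to be the crux of the argument and the point requiring the most care: one must ensure that the limit $x_0$, the induced $\G_m$-representation on its fiber, and the resulting weight all vary algebraically over $T$, so that integrality forces constancy.

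By additivity, $\mu^L(x,\lambda) - \mu^{L'}(x,\lambda) = \mu^{L \otimes (L')^{-1}}(x,\lambda) = 0$, since $L \otimes (L')^{-1}$ is $G$-algebraically trivial. As $(x,\lambda)$ were arbitrary, the numerical criterion yields $X^{ss}(L) = X^{ss}(L')$ (and likewise $X^s(L) = X^s(L')$), so the semistable locus depends only on $[L]$. Finally, for the quotients I would invoke uniqueness of categorical quotients: by Theorem \ref{theorem:GIT_quotients} the GIT quotient map $\pi \colon X^{ss}(L) \to X^{ss}(L)\sslash G$ is a good quotient, hence a categorical quotient of $X^{ss}(L)$ by $G$. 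Since $X^{ss}(L) = X^{ss}(L') =: X^{ss}$ as $G$-varieties, both $X^{ss}(L)\sslash G$ and $X^{ss}(L')\sslash G$ are categorical quotients of the single space $X^{ss}$, and the universal property produces a unique isomorphism between them commuting with the two quotient maps. Hence the quotient, too, depends only on $[L] \in NS^G(X)$.
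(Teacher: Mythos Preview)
The paper does not give its own proof of this proposition; it simply cites Thaddeus, Proposition 2.1. Your argument is correct and is essentially the standard one (and, indeed, is the line of reasoning in the cited reference): reduce to the Hilbert--Mumford invariant, use additivity of $\mu^{(-)}(x,\lambda)$ on $\mathrm{Pic}^G(X)$, and kill the $G$-algebraically trivial classes by the rigidity-of-characters argument you sketch (the weight of $\G_m$ on the fibers of a $G$-linearized line bundle over a connected base with trivial $G$-action is locally constant, hence constant). The deduction for the quotient via uniqueness of categorical quotients is also correct.

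Two small remarks. First, your sign convention for $\mu$ is Mumford's ($\mu \ge 0$ for semistable), which is opposite to the convention fixed in this paper (Theorem \ref{theorem:Mumf}); the argument is unaffected, but if this is to be spliced in it should be aligned. Second, the step you flag as the crux---that the fiberwise weight is locally constant on $T$---can be made precise exactly as you indicate: restricting $\mathcal{M}$ to $\{x_0\}\times T$ gives a $\G_m$-linearized line bundle on $T$ with trivial $\G_m$-action on the base, and the cocycle condition forces the weight function $T\to\Z$ to be given by a system of orthogonal idempotents in $\mathcal{O}_T$, hence constant on connected $T$.
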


\begin{proof}
   See \cite{Tha96} Proposition 2.1.
\end{proof}

For this reason we consider $G-$algebraic equivalence classes of linearized line bundles in the N\'eron Severi group $NS^G(X)$. Tensoring with $\Q$ gives a finite dimensional vector space $NS^G(X)_\Q:= NS^G(X) \otimes \Q$ \cite[Section 2]{Tha96}. Let $E^G$ be the set of equivalence classes of line bundles for which the semi-stable locus is non-empty. The \textit{$G$-ample cone} is defined to be $E^G \otimes \Q \subset NS^G (X)_{\Q}.$ The set of line bundles in $NS^G(X)_{\Q}$ at which a point $x \in X$ is strictly semi-stable is called a \textit{GIT wall.} The $G-$ample cone is divided by the GIT walls into finitely many convex chambers such that the semi-stable locus does not change within each chamber \cite{Tha96}, \cite{DH98}.

Let $L$ be an ample $G$-linearized line bundle on $X$. For some $m \gg 0$, the tensor power $L^{\otimes m}$ embeds $X$ in $\Pj^N$ such that $G$ acts on $X$ as a subgroup of $GL(N+1)$. A \textit{one-parameter subgroup} of an algebraic group $H$ is a morphism of algebraic groups $\G_m \rightarrow H$, where $\G_m$ is the multiplicative group scheme over $k$. Given any one-parameter subgroup $\lambda$ of $G \subset GL(N+1)$ we can find a basis of $\Pj^N$ such that the image of $\lambda$ is diagonal of the form
$$ \lambda: t \mapsto \begin{bmatrix}
    t^{r_1} & & & 0\\
    & t^{r_2} & & \\
    & & \ddots & \\
   0 & & & t^{r_{N+1}}
\end{bmatrix} $$
for $r_i \in \Z$. Let $x \in X$ have homogeneous coordinates $[X_1: \dots : X_{N+1}]$ with respect to the the diagonalizing basis for $\lambda$. Then the \textit{Hilbert-Mumford Numerical Function} identifies the minimal exponent $r_i$ such that $X_i$ is non-zero:
$$\mu^L (x, \lambda) := \min \{r_i \ | \ X_i \neq 0\}.$$
This number is independent of the diagonalizing basis.

\begin{thm}{(The Hilbert-Mumford Numerical Criterion)}\label{theorem:Mumf}
    Let $S$ be the set of all one-parameter subgroups of $G$. Then
    \begin{itemize}
        \item $X^{ss}(L) = \{ x \in X \ | \ \mu^L(x, \lambda) \leq 0 \ for \ all \ \lambda \in S  \}$.
            \item $X^{s}(L) = \{ x \in X \ | \ \mu^L(x, \lambda) < 0 \ for \ all \ \lambda \in S  \}$.
    \end{itemize}
\end{thm}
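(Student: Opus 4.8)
The plan is to reduce both stability conditions to orbit-closure statements on the affine cone and then invoke the structure theory of one-parameter subgroups. Using the embedding by $L^{\otimes m}$ so that $G$ acts on $X \subset \Pj^N$ through $SL(N+1)$, fix $x$ and choose a nonzero lift $\hat{x} \in \hat{X} \subset \A^{N+1}$ in the affine cone. The first step is to set up the dictionary: $x \in X^{ss}(L)$ if and only if $0 \notin \overline{G \cdot \hat{x}}$, and $x \in X^{s}(L)$ if and only if $G \cdot \hat{x}$ is closed in $\A^{N+1} \setminus \{0\}$ with finite stabilizer $G_{\hat{x}}$. The semistable equivalence holds because a $G$-invariant section of $L^{\otimes m}$ is precisely a $G$-invariant homogeneous polynomial of degree $m$ on $\A^{N+1}$; such a polynomial is constant along $\overline{G \cdot \hat{x}}$ and vanishes at the origin, so a nonvanishing invariant at $\hat{x}$ forces $0 \notin \overline{G \cdot \hat{x}}$, and the converse uses that disjoint closed $G$-invariant subsets of an affine $G$-variety are separated by invariants.

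Second, I would compute $\mu^L$ directly on the cone. In a basis diagonalizing $\lambda$ with weights $r_i$ one has $\lambda(t) \cdot \hat{x} = (t^{r_1} X_1, \dots, t^{r_{N+1}} X_{N+1})$, so $\mu^L(x,\lambda) = \min\{ r_i : X_i \neq 0\}$ exactly governs the limit as $t \to 0$: the limit $\lim_{t \to 0} \lambda(t)\cdot\hat{x}$ exists in $\A^{N+1}$ iff $\mu^L(x,\lambda) \geq 0$, and equals the origin iff $\mu^L(x,\lambda) > 0$. This yields the easy implications at once. If $\mu^L(x,\lambda) > 0$ for some $\lambda$, then $0 = \lim_{t\to 0}\lambda(t)\cdot\hat{x} \in \overline{G\cdot\hat{x}}$, so $x$ is not semistable; contrapositively, semistability forces $\mu^L(x,\lambda) \leq 0$ for every $\lambda$. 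For stability, if $\mu^L(x,\lambda) = 0$ for some $\lambda$, the nonzero limit either lies outside $G \cdot \hat{x}$ (a non-closed orbit) or equals $\hat{x}$ itself for a $\lambda$ fixing it (a positive-dimensional stabilizer); either way $x$ is not stable, so stability forces $\mu^L(x,\lambda) < 0$ for all $\lambda$.

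The main obstacle is the converse: given that $x$ is unstable, i.e. $0 \in \overline{G \cdot \hat{x}}$, one must produce a genuine one-parameter subgroup $\lambda$ realizing the degeneration, $\lim_{t \to 0}\lambda(t)\cdot\hat{x} = 0$, even though a priori the origin is only known to be approached along an arbitrary curve in $G$. This is the heart of the Hilbert--Mumford criterion. I would argue via the valuative criterion of properness applied to the orbit map: the containment $0 \in \overline{G \cdot \hat{x}}$ produces a $k((t))$-point $g \in G(k((t)))$ with $g \cdot \hat{x} \to 0$, and then the Iwahori--Cartan decomposition $G(k((t))) = G(k[[t]])\,\lambda(t)\,G(k[[t]])$ lets me absorb the two integral factors using that $G(k[[t]])$ acts without affecting the $t \to 0$ limit, reducing the arbitrary curve $g$ to an honest one-parameter subgroup $\lambda$ with $\mu^L(x,\lambda) > 0$.

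Finally, the same fundamental lemma closes the stable case. If $\mu^L(x,\lambda) < 0$ for all $\lambda$ then $x$ is semistable by the above, and the absence of any $\lambda$ with $\mu^L(x,\lambda) = 0$ rules out both a nonzero boundary point of $G \cdot \hat{x}$ (which the lemma would detect by a $\lambda$ of vanishing minimal weight) and a one-parameter subgroup sitting in the stabilizer; invoking reductivity of the stabilizer of a closed orbit to promote finiteness from the absence of such tori, one concludes the orbit is closed with finite stabilizer, hence $x \in X^{s}(L)$.
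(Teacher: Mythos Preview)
The paper does not prove this theorem at all: immediately after the statement it simply refers the reader to \cite{Mum82} and notes the sign convention. So there is no ``paper's own proof'' to compare against; this is a cited background result.

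Your outline is the standard argument one finds in Mumford's book (and in expositions such as Dolgachev \cite{Dol03}): pass to the affine cone, translate (semi)stability into orbit-closure statements for $\hat{x}$, read off the easy directions from the explicit action $\lambda(t)\cdot\hat{x}=(t^{r_i}X_i)$, and handle the hard direction by producing a destabilizing one-parameter subgroup via the Iwahori/Cartan decomposition of $G(k(\!(t)\!))$. The stable case is closed exactly as you say, using the fundamental lemma to detect a boundary point by some $\lambda$ with $\mu^L(x,\lambda)=0$, together with Matsushima's criterion to extract a torus from a positive-dimensional stabilizer of a closed orbit. In short, your sketch is correct and is precisely the content of the reference the paper cites; you have supplied what the paper deliberately omits.
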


For a proof of Theorem \ref{theorem:Mumf} and further details see \cite[Chapter 2.1]{Mum82}. Caution: Mumford uses the opposite $\pm$ signature for $\mu^L(x, \lambda);$ both conventions occur in the literature. 

\subsection{The Numerical Criterion for marked plane curves}
Our moduli spaces $\overline{M}^{git}_{g(d), \mathbf{w}}$ are formed as the GIT quotients of a parameter space $\mc{C}_{n,d}$ of degree $d$ plane curves with $n$ marked points by the natural action of $SL(3)$ acting as linear automorphisms of the projective plane. Our parameter spaces $\mc{C}_{n,d}$ are built from the product of  $n$ copies of $\Pj^2$ with $\Pj(\Gamma(\Pj^2, \mc{O}(d)))$, the Hilbert scheme of plane curves of degree $d$. The vector space $\Gamma(\Pj^2, \mc{O}(d))$ is generated by the $\binom{d+2}{2}$ degree $d$ monomials $x^iy^jz^k$, where $x,y,$ and $z$ are generators of $\Gamma(\Pj^2, \mc{O}(1))$. We specify these generators in the following paragraph. Using coordinates $a_{ijk}$ for $(\Gamma(\Pj^2, \mc{O}(d))$ and $(x_r:y_r:z_r)$ for the $r^{th}$ copy of $\Pj^2,$ we define $\mc{C}_{n,d}$ as the subvariety of $\Pj(\Gamma(\Pj^2, \mc{O}(d))) \times (\Pj^2)^n$ cut out by the $n$ \textit{incidence relations}: $$\sum_{i+j+k=d} a_{i,j,k}x_r^iy_r^jz_r^k = 0, \ \ r = 1, \dots , n.$$
The parameter space $\mc{C}_{n,d}$ is therefore the variety $\mc{U}^n$ studied by Caminata, Moon, and Schaffler, which is normal and Gorenstein \cite[Remark 4.14]{CamMoonSchaff}.

$SL(3)$ acts naturally on $\Pj^2$ as linear automorphisms. It acts by inverses on the space of functions $\Gamma(\Pj^2, \mc{O}(d)),$ that is $g \cdot f(p) := f(g^{-1} \cdot p)$. Then the product of these actions of $SL(3)$ on $\Pj(\Gamma(\Pj^2, \mc{O}(d))) \times (\Pj^2)^n$ restricts to an action on $\mc{C}_{n,d}$: if the points $p_i$ lie on a curve $C$ then $g \cdot p_i$ will lie on $g \cdot C.$ We say two marked curves, $(C, p_1, \dots p_n)$ and $(D, q_1, \dots , q_n)$ are \textit{linearly equivalent} if they belong to the same $SL(3)$ orbit, namely if there exists a $g \in SL(3)$ such that $g \cdot C=D$ and $g \cdot p_i=q_i$ for each $i \in \{1, \dots , n\}.$

We now describe a modification of the Hilbert-Mumford Numerical Criterion which gives an effective way of computing the GIT stability for marked plane curves. We fix a maximal torus $T$ of $SL(3)$ and choose coordinates $x, y, z$ on $\Pj^2$ such that $T$ is diagonal. A one-parameter subgroup $\lambda$ of $T$ has the form
$$\lambda: t \mapsto \begin{bmatrix}
    t^a & & 0 \\
    & t^b & \\
    0 & & t^c
\end{bmatrix}$$
and will be denoted by $\diag\{a,b,c\}$, with the exponents $a,b,$ and $c$ the \textit{weights} of $\lambda$. Let $N \cong \Z^2$ be the lattice of one-parameter subgroups of $T$. GIT stability is determined by the sign of the Hilbert-Mumford function, which is invariant under scaling the weights of $\lambda$ by a positive factor. We therefore work with the vector space of rational one-parameter subgroups $N \otimes \Q$. If $a=1 \geq b \geq c$ then $\lambda = \lambda_r := \diag\{1,r,-1-r\}$ for some $r \in [-1/2, 1]$. We call such one-parameter subgroups \textit{normalized}.

In Theorem \ref{theorem:NC} we reduce the Hilbert-Mumford Numerical Criterion to normalized one-parameter subgroups of our fixed maximal torus. This comes at the expense of having to consider the entire $SL(3)$ orbit of marked curves $x\in \mc{C}_{n,d}$, which can be seen as considering a marked curve under all coordinate systems on $\Pj^2$.
\begin{thm}{(Our Numerical Criterion)}\label{theorem:NC}
    Let $S'$ be the set of all normalized one-parameter subgroups of $T$. Then 
    \begin{itemize}
        \item $\mc{C}_{n,d}^{ss}(L) = \{ x \in \mc{C}_{n,d} \ | \ \mu^L(g \cdot x, \lambda_r ) \leq 0 \ for \ all \ \lambda_r \in S', g \in SL(3)  \}$.
            \item $\mc{C}_{n,d}^s(L) = \{ x \in \mc{C}_{n,d} \ | \ \mu^L(g \cdot x, \lambda_r ) < 0 \ for \ all \ \lambda_r \in S', g \in SL(3)  \}$.
    \end{itemize}
\end{thm}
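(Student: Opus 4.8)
The plan is to derive Theorem \ref{theorem:NC} from the Hilbert--Mumford criterion (Theorem \ref{theorem:Mumf}) applied to $X = \mc{C}_{n,d}$ and $G = SL(3)$, using two standard properties of the numerical function: its behavior under conjugation of the one-parameter subgroup, and the fact that every one-parameter subgroup of $SL(3)$ is conjugate to a normalized one lying in the fixed torus $T$. First I would record the conjugation identity $\mu^L(x, g^{-1}\lambda g) = \mu^L(g\cdot x, \lambda)$ for all $g \in SL(3)$. This follows directly from the definition of $\mu^L$ as a minimal exponent: if $\lambda$ is diagonalized with weights $(r_i)$ in a basis in which $x$ has coordinates $(X_i)$, then in the translated basis $g^{-1}\lambda g$ is diagonalized with the same weights while $g\cdot x$ acquires those same coordinates $(X_i)$, so the minimum of the $r_i$ over nonvanishing coordinates is unchanged.

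Next I would reduce the quantifier over all one-parameter subgroups $\lambda$ of $SL(3)$ to one over the fixed torus. The image of any one-parameter subgroup $\G_m \to SL(3)$ is a connected diagonalizable subgroup, hence contained in some maximal torus; since all maximal tori of $SL(3)$ are conjugate, there exist $h \in SL(3)$ and a one-parameter subgroup $\lambda'$ of $T$ with $\lambda = h\lambda' h^{-1}$. Setting $g = h^{-1}$ in the conjugation identity yields $\mu^L(x,\lambda) = \mu^L(g\cdot x, \lambda')$, and as $\lambda$ ranges over all one-parameter subgroups the pair $(g,\lambda')$ ranges over $SL(3)$ together with all one-parameter subgroups of $T$. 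Thus the conditions $\mu^L(x,\lambda)\le 0$ (resp. $<0$) for all $\lambda$ are equivalent to $\mu^L(g\cdot x, \lambda')\le 0$ (resp. $<0$) for all $g \in SL(3)$ and all one-parameter subgroups $\lambda'$ of $T$.

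It then remains to cut the one-parameter subgroups of $T$ down to the normalized family $\lambda_r = \diag\{1,r,-1-r\}$ with $r \in [-1/2,1]$. Such a subgroup has the form $\diag\{a,b,c\}$ with $a+b+c=0$; the trivial one gives $\mu^L \equiv 0$ and is irrelevant to the sign tests. Conjugation by a representative in $N_{SL(3)}(T)$ of the corresponding Weyl group element reorders the weights, and this conjugation can be absorbed into the free parameter $g$, so we may assume $a \ge b \ge c$ with $a>0$. Finally I would invoke positive homogeneity, $\mu^L(y, m\lambda') = m\,\mu^L(y,\lambda')$ for integers $m>0$, so that the sign of $\mu^L$ depends only on the ray of $\lambda'$; rescaling to $a=1$ sends $\diag\{a,b,c\}$ to $\diag\{1, b/a, c/a\}$, and $a\ge b\ge c$ with $a+b+c=0$ translates into $r:=b/a \in [-1/2,1]$. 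Interpreting $\mu^L(\,\cdot\,,\lambda_r)$ for rational $r$ via this homogeneity, the sign tests over all of $T$ and over $S'$ agree, giving the two displayed descriptions.

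The main obstacle is the bookkeeping in this last step: one must verify that passing to rational normalized subgroups is compatible with the integrality of genuine one-parameter subgroups, i.e. that the sign of $\mu^L$ for each integral $\lambda' \in T$ is detected by some $\lambda_r$ with rational $r$ and, conversely, that every rational $r \in [-1/2,1]$ is realized (after clearing denominators) by an integral subgroup. One should also keep careful track of the sign convention flagged after Theorem \ref{theorem:Mumf}, so that the directions of the inequalities $\le 0$ and $<0$ match the minimal-exponent definition used here. Once these points are checked, the equalities for $\mc{C}_{n,d}^{ss}(L)$ and $\mc{C}_{n,d}^{s}(L)$ follow immediately from Theorem \ref{theorem:Mumf}.
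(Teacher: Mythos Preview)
Your proposal is correct and follows essentially the same route as the paper: both arguments reduce from arbitrary one-parameter subgroups of $SL(3)$ to the fixed torus $T$ via conjugacy of maximal tori and the conjugation identity $\mu^L(g\cdot x,\lambda)=\mu^L(x,g^{-1}\lambda g)$ (the paper cites this as a functorial property from \cite{Mum82}). Your treatment is in fact more complete than the paper's, which does not spell out the further reduction from arbitrary one-parameter subgroups of $T$ to the normalized family $\lambda_r$; your use of the Weyl group (absorbed into $g$) to order the weights and of positive homogeneity to rescale to $a=1$ is exactly what is needed to fill that gap.
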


\begin{proof}
    We prove that there exists a normalized one-parameter subgroup $\lambda_r$ of $T$ making $\mu^L(g \cdot x, \lambda_r) \geq 0$ for some $g \in SL(3)$ if and only if there exists a one-parameter subgroup $\lambda$ of $SL(3)$ making $\mu^L(x,\lambda) \geq 0$. The first direction is immediate, taking $g$ to be the identity matrix and $\lambda := \lambda_r$. Now suppose there is a $\lambda$ making $\mu^L(x,\lambda) \geq 0$. The image of $\lambda$ is contained in some maximal torus of $SL(3)$. Since all maximal tori of $SL(3)$ are conjugate, the image of $\lambda$ is conjugate to a subgroup of $T$. That is, there exists a $g \in SL(3)$ such that $g \lambda g^{-1}(\C^*) \subset T$. After a permutation of coordinates we may furthermore assume that the weights of $g \lambda g^{-1}$ satisfy $a \geq b \geq c$.
    Then, using the functorial properties of $\mu$ \cite[Definition 2.2]{Mum82},
    $$\mu^L(g \cdot x, g \lambda g^{-1}) = \mu^L(x, g^{-1}g \lambda g^{-1}g)= \mu^L(x, \lambda) \geq 0.  $$
    By scaling the weights $a,b,c$ of $g \lambda g^{-1}$ by $\frac{1}{a}$ we obtain a normalized one-parameter subgroup satisfying the same inequality. This shows that some $g \cdot x$ in the orbit of $x$ is stable with respect to a normalized one-parameter subgroup of $T$ if and only if $x$ is stable. The proof for semi-stability is identical, with the inequalities replaced by strict inequalities.
\end{proof}

We now introduce combinatorial objects that are useful for computations involving the numerical criterion. Let $\Xi_d$ be the set of degree $d$ monomials in $x,y,$ and $z$. For a point $p \in \Pj^2$ we define the \textit{support} of $p$ to be $\Xi(p) := \{ x_i \in \Xi_1 \ | \ \text{the $x_i$ coordinate of $p$ is nonzero} \}$. For a degree $d$ curve $C$ we define the \textit{support} of $C$ to be $\Xi(C) = \{ m \in \Xi_d \ | \ \text{the coefficient of} \ m  $ $  \text{in the polynomial function defining $C$ is non-zero} \}$. The standard bilinear pairing between characters and one-parameter subgroups of a torus restricts to pairings between elements of $\Xi_d$ and normalized one-parameter subgroups $\lambda_r$ defined by
$$\langle x^a y^b z^c , \lambda_r \rangle := a + rb + (-1-r)c.$$

\begin{defn}\label{defn:MukaiOrder}
The \textit{Mukai Order} on $\Xi_d$ is given by $m \geq m'$ if and only if $\langle m, \lambda_r \rangle \geq \langle m', \lambda_r \rangle$ for all normalized one-parameter subgroups $\lambda_r$.
\end{defn}

\begin{lemma}
 $x^ay^bz^c \geq x^{a'}y^{b'}z^{c'}$ if and only if $a \geq a'$ and $a+ b \geq a' + b'$.
\end{lemma}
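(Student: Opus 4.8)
The plan is to exploit the fact that, for a fixed pair of monomials, the difference of pairings $\langle\,\cdot\,,\lambda_r\rangle$ is \emph{affine} in the parameter $r$, so that the Mukai order condition---an inequality required for \emph{all} normalized $\lambda_r$, i.e.\ all $r \in [-1/2,1]$---reduces to checking only the two endpoints of the interval.

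First I would set $m = x^a y^b z^c$ and $m' = x^{a'}y^{b'}z^{c'}$, write $\Delta a := a - a'$, $\Delta b := b - b'$, $\Delta c := c - c'$, and record that both monomials lie in $\Xi_d$, so that $a+b+c = a'+b'+c' = d$ and hence $\Delta a + \Delta b + \Delta c = 0$. Using the pairing $\langle x^a y^b z^c, \lambda_r\rangle = a + rb + (-1-r)c$, I would form the difference
\[
g(r) := \langle m, \lambda_r\rangle - \langle m', \lambda_r\rangle = (\Delta a - \Delta c) + r(\Delta b - \Delta c),
\]
which is an affine function of $r$. By Definition \ref{defn:MukaiOrder}, $m \geq m'$ in the Mukai order precisely when $g(r) \geq 0$ for every $r \in [-1/2,1]$.

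Next I would use that an affine function on a closed interval is nonnegative throughout the interval if and only if it is nonnegative at both endpoints. Thus $m \geq m'$ if and only if $g(-1/2) \geq 0$ and $g(1) \geq 0$. Evaluating at the endpoints and eliminating $\Delta c = -\Delta a - \Delta b$ via the degree relation yields $g(-1/2) = \tfrac{3}{2}\Delta a$ and $g(1) = 3(\Delta a + \Delta b)$. Therefore $g(-1/2) \geq 0 \iff a \geq a'$ and $g(1) \geq 0 \iff a + b \geq a' + b'$, which is exactly the asserted criterion.

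There is no serious obstacle here: the whole statement is a short linear computation. The only point demanding care is the justification that it suffices to test the endpoints $r = -1/2$ and $r = 1$ rather than all $r$; this is precisely where the normalization (fixing the top exponent to $1$, so that the pairing is affine in $r$ rather than merely piecewise-linear in a general one-parameter subgroup) is used. I would also note that, after rescaling, these endpoints correspond to the extremal subgroups $\diag\{2,-1,-1\}$ and $\diag\{1,1,-2\}$, which explains why exactly these two inequalities capture the entire order.
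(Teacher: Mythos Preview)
Your proof is correct: the computation that $g(r)$ is affine in $r$ and that its values at the endpoints $r=-\tfrac12$ and $r=1$ reduce (via $\Delta a+\Delta b+\Delta c=0$) to $\tfrac32\Delta a$ and $3(\Delta a+\Delta b)$ is exactly right, and the endpoint reduction for an affine function is the appropriate justification.

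For comparison, the paper does not give its own argument at all; it simply cites Mukai's book (chapter~7.2). So your write-up is strictly more informative than what appears in the paper. The approach you take---reducing to the two extremal one-parameter subgroups $\diag\{2,-1,-1\}$ and $\diag\{1,1,-2\}$ by affineness---is the standard one and is presumably what Mukai does as well; your closing remark identifying these endpoints with those extremal subgroups is a nice touch that makes the geometry transparent.
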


\begin{proof}
See \cite{Muk03} chapter 7.2.
\end{proof}

\begin{defn}\label{defn:maxsup}
The \textit{maximal support} of a curve $C$, denoted $\Xi_{max}(C)$, is the set of maximal elements  of the support of $C$ under the Mukai Order. Similarly, we define $\Xi_{min}(p)$ to be the minimal element in the support of $p$. If $S, S' \subset \Xi_d$ are sets of degree $d$ monomials, we say $S \geq S'$ if for each $m' \in S'$ there exists an $m \in S$ with $m \geq m'.$
\end{defn}

\subsection{Computing the stability of a marked curve} \label{subsec:computing}

In this section we describe heuristics for computing the (semi-)stability of a marked curve with respect to a given line bundle. Theorem \ref{thm:generic} demonstrates such computations in detail.

We consider the line bundles on $\mc{C}_{n,d}$ obtained as restrictions of nef line bundles on $\Pj^{\binom{d+2}{d}-1} \times (\Pj^2)^n.$ These are restrictions of line bundles of the form $\pi_0^*(\mc{O}(G)) \times \pi_1^*(\mc{O}(W_1)) \times \dots \times \pi_n^*(\mc{O}(W_n))$ for non-negative integers $G, W_1, \dots , W_n \in \Z$. Since $SL(3)$ has trivial character group there is only one linearization for each line bundle of this form. We will prove that these line bundles span the N\'eron-Severi group $NS^{SL(3)}(\mc{C}_{n,d})_\Q$.

\begin{defn}
    Let $\Lambda(\mc{C}_{n,d})$ be the intersection of the $SL(3)-$ample cone of $\mc{C}_{n,d}$ with the subspace of classes of line bundles generated by the restriction of a line bundles on $\Pj^{\binom{d+2}{d}-1} \times (\Pj^2)^n.$ 
\end{defn}

We denote vectors in this cone by $\mathbf{w} = (\gamma, w_1, \dots , w_n) \in \Q^{n+1}$ and we denote line bundle representatives of the equivalence class $\mathbf{w}$ by $L_{\mathbf{w}}$ or simply $L$ if there is no ambiguity. 

\begin{lemma} \label{lemma:Lambda}
    Let $P:= \Pj (H^0(\Pj^2, \mc{O}(d))) \times (\Pj^2)^n$. There is an isomorphism of \textup{Pic}$(P)_\Q$ with $NS^{SL(3)}(\mc{C}_{n,d})_\Q$ which identifies
    $\Lambda(\mc{C}_{n,d})$ with the $SL(3)$-ample cone.
\end{lemma}

\begin{proof}
    Consider the effective ample divisors  $D_1, \dots , D_n$ on $P$ given by the $n$ incidence relations $D_i := \{ \sum_{i+j+k=d} a_{i,j,k}x_i^iy_i^jz_i^k = 0\}$. Since $\mc{C}_{n,d}$ is the complete intersection of these divisors in $P$, a version of the Lefschetz Hyperplane Theorem \cite[Remark 3.1.32]{LazarsPos} implies that $H^2(P, \Z) \cong H^2(\mc{C}_{n,d}, \Z)$. If we consider the exponential exact sequence of sheaves on $P$ and $\mc{C}_{n,d}$ we get long exact sequences of their cohomology groups. Consider the following piece of the long exact sequences:

\begin{center}

    \begin{tikzcd}
        H^1(P, \mc{O}_P) \ar[r] & H^1(P, \mc{O}_P^*) \ar[r] & H^2(P, \Z) \ar[r] \ar[d, "\simeq"] & H^2(P, \mc{O}_P) \\
        
         H^1(\mc{C}_{n,d}, \mc{O}) \ar[r] & H^1(\mc{C}_{n,d}, \mc{O^*}) \ar[r, "c_1"] & H^2(\mc{C}_{n,d}, \Z) \ar[r] & H^2(\mc{C}_{n,d}, \mc{O}). 
    \end{tikzcd}
    
\end{center}

By the K\"unneth Theorem, $H^i(P, \mc{O}_P)=0$ for $i=1$ and $2$. Therefore the map $H^1(P, \mc{O^*}_P) \rightarrow H^2(P, \Z)$ is an isomorphism $H^1(P, \mc{O^*}_P) \cong H^2(\mc{C}_{n,d}, \mc{O}).$ The map $c_1$ is the first Chern class of a line bundle. It maps algebraic equivalence classes of line bundles in $NS(\mc{C}_{n,d}) $ injectively into $H^2(\mc{C}_{n,d}, \Z) \cong \text{Pic}(P).$ After tensoring with $\Q$ we have an injective map of $\Q$ vector spaces $NS(\mc{C}_{n,d})_\Q \hookrightarrow \text{Pic}(P)_\Q $. On the other hand, the restriction of line bundles map $\text{Pic}(P)_\Q \rightarrow NS(\mc{C}_{n,d})_\Q$ is injective because its image has dimension $n+1$: each of the $n+1$ generators of Pic$(P)_\Q$, $\gamma, w_1, \dots, w_n$, has a distinct effect on the stability of marked curves when they become too large, so they are linearly independent in $NS(\mc{C}_{n,d})$.

Then Pic$(P)_\Q$ and $NS(\mc{C}_{n,d})_\Q$ are equidimensional and the injection $NS(\mc{C}_{n,d})_\Q \hookrightarrow \text{Pic}(P)_\Q $ is an isomorphism. Furthermore, since the character group of $SL(3)$ is trivial, each line bundle on $\mc{C}_{n,d}$ admits a unique $SL(3)$ linearization. So we obtain an isomorphism $NS^{SL(3)}(\mc{C}_{n,d})_\Q \cong \text{Pic}(P)_\Q$.
\end{proof}

Given a vector $\mathbf{w}:=(\gamma, w_1, \dots , w_n)$, let  $M$ be the least common multiple of the denominators of $\gamma, w_1, \dots , w_n$. For each marked curve $(C, p_1, \dots , p_n)$ we study the set of vectors $\mathbf{w}$ in $\Lambda(\mc{C}_{n,d})$ such that $(C, p_1, \dots , p_n)$ is $L_{M\mathbf{w}}$ (semi-)stable.  The line bundle $L_{M\mathbf{w}}$ embeds $\mc{C}_{n,d}$ into $\Pj^N$ by a composition of the Segre embedding with the Veronese embeddings of  respective degrees $M \gamma, M w_1, \dots ,$ and $ M w_n$ on $\mc{C}_{n,d} \subset \Pj^{\binom{d+2}{d}-1} \times (\Pj^2)^n $. Let  $\lambda_r$ be the normalized diagonal one-parameter subgroup $diag\{1,r,-1-r\}$. From the definition of $\mu$ we then compute
\begin{equation} 
    \mu^{L_{M\mathbf{w}}}((C,p_1, \ldots, p_n), \lambda_r)
=
M(\sum_{i=1}^n w_i \min \{ \langle x_i , \lambda_r \rangle \ | \ x_i \in \Xi(p_i) \} - \gamma \max \{ \langle m, \lambda_r \rangle \ | \ m \in \Xi(C) \}). 
\end{equation}

Since our Numerical Criterion (Theorem \ref{theorem:NC}) determines stability by the $\pm$ sign of $\mu^L(\mathbf{x}, \lambda_r)$, we are interested in the numerical function up to a positive multiple. We therefore expand its definition to include vectors with non-integer coordinates:

\begin{equation} \label{eq:mu}
    \mu^{\mathbf{w}}((C,p_1, \ldots, p_n), \lambda_r)
:=
\sum_{i=1}^n w_i \min \{ \langle x_i , \lambda_r \rangle \ | \ x_i \in \Xi(p_i) \} - \gamma \max \{ \langle m, \lambda_r \rangle \ | \ m \in \Xi(C) \}. 
\end{equation}

\begin{rmk}
    We will also write $\mu^L$ for the function $\mu^{\mathbf{w}}$ when $L$ is a line bundle representative of the equivalence class $\mathbf{w}$.
\end{rmk}

By Theorem \ref{theorem:NC}, the marked curve $(C, p_1, \dots , p_n)$ is $L_{\mathbf{w}}$ stable if $\mu^{\mathbf{w}}( g \cdot (C,p_1, \ldots, p_n), \lambda_r)$ is negative for all $g \in SL(3)$ and $r \in [\frac{-1}{2}, 1]$. Notice that the right side of equation \ref{eq:mu} is determined by the monomials in the support of $C$ and  $p_1, \dots , p_n$. We therefore are interested in the \textit{(maximal) support of $g \cdot (C,p_1, \dots, p_n)$}, defined as
$$ \Xi(g \cdot (C, p_1, \dots , p_n)) := (\Xi(g \cdot C), \Xi(g \cdot p_1), \dots , \Xi(g \cdot p_n)) \in \mc{P}(\Xi_d) \times (\mc{P}(\Xi_1))^n$$
$$\text{and} \ \ \Xi_{max}(g \cdot (C, p_1, \dots , p_n)) := (\Xi_{max}(g \cdot C), \Xi_{min}(g \cdot p_1), \dots , \Xi_{min}(g \cdot p_n)) \in \mc{P}(\Xi_d) \times (\mc{P}(\Xi_1))^n$$
where $\mc{P}$ denotes the power set. In fact, because stability is determined by the maximal values of $ \mu^L(g \cdot (C,p_1, \ldots, p_n), \lambda_r)$, we only need to keep track of the $g \in SL(3)$ such that $\mu^L(g \cdot (C, p_1, \dots , p_n), \lambda_r)$ is maximized as a function of $r$ and $L$. We define the following partial order
\begin{defn}\label{defn:prodsupporder}
    For $(X, P_1, \dots , P_n), (Y, Q_1, \dots Q_n) \in \mc{P}(\Xi_d) \times (\mc{P}(\Xi_1))^n$, we say $$(X, P_1, \dots , P_n) \geq (Y, Q_1, \dots Q_n)$$ if $X \leq Y$ and $P_i \geq Q_i$ (as in Definition \ref{defn:maxsup}) for all $i \leq n$.
\end{defn}

Notice the direction of the inequalities in the above definition. This is explained by the following Lemma.
\begin{lemma}\label{lemma:maxmuxi}
    Let $(C, p_1, \dots , p_n)$ and $(D, q_1, \dots , q_n)$ be marked curves. The following are equivalent:
    
    \begin{itemize}
        \item $\Xi((C, p_1, \dots, p_n)) \geq \Xi((D, q_1, \dots, q_n))$ 
        \item $\Xi_{max}((C, p_1, \dots, p_n)) \geq \Xi_{max}((D, q_1, \dots, q_n))$ 
        \item $\mu^L((C, p_1, \dots, p_n), \lambda_r) \geq \mu^L((D, q_1, \dots, q_n), \lambda_r)$ for all $r \in [\frac{-1}{2},1]$ and $L$ in $\Lambda(\mc{C}_{n,d})$.
         
    \end{itemize}
\end{lemma}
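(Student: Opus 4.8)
The plan is to pass from the full supports to their extremal elements by a convenient reparametrization, and then read off all three conditions from the geometry of the numerical function. Writing a degree-$d$ monomial $x^ay^bz^c$ as the lattice point $(u,v):=(a,\,a+b)\in\Z^2$, the description of the Mukai order recalled just before this lemma says precisely that $m\ge m'$ if and only if $(u,v)\ge(u',v')$ coordinatewise, so the Mukai order becomes the product order on $\Z^2$. Substituting $c=d-a-b$ into the pairing gives
\begin{equation*}
\langle x^ay^bz^c,\lambda_r\rangle=(1-r)\,u+(1+2r)\,v-(1+r)\,d,
\end{equation*}
which for every $r\in[-\tfrac12,1]$ is an affine function of $(u,v)$ with \emph{nonnegative} coefficients $(1-r,1+2r)$; moreover, as $r$ ranges over $[-\tfrac12,1]$ the normal direction $(1-r,1+2r)$ sweeps out every ray of the closed first quadrant. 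This single observation is the engine of the proof. From it I record two consequences. (i) Since the pairing is coordinatewise monotone in $(u,v)$ on the whole interval, $\max\{\langle m,\lambda_r\rangle:m\in\Xi(C)\}$ is attained on the product-order-maximal points, i.e.\ on $\Xi_{max}(C)$, and dually $\min\{\langle x_i,\lambda_r\rangle:x_i\in\Xi(p_i)\}=\langle\Xi_{min}(p_i),\lambda_r\rangle$; hence $\mu^L((C,p_1,\dots,p_n),\lambda_r)$ is unchanged when each support is replaced by its extremal set. (ii) Because the directions $(1-r,1+2r)$ exhaust the nonnegative quadrant, a supporting-hyperplane/LP-duality comparison holds: for finite $S,S'\subset\Xi_d$ one has $\max_S\langle\cdot,\lambda_r\rangle\le\max_{S'}\langle\cdot,\lambda_r\rangle$ for all $r$ iff every maximal element of $S$ is dominated by an element of $S'$, i.e.\ $S\le S'$ in the set order of Definition \ref{defn:maxsup}, with the dual statement for minima.

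With these in hand, $(1)\Leftrightarrow(2)$ is the combinatorial statement that the set order is detected by the extremal elements that actually enter $\mu$: every element of a finite poset lies below a maximal one and above a minimal one, so $\Xi(C)\le\Xi(D)$ iff $\Xi_{max}(C)\le\Xi_{max}(D)$, and on each point factor the comparison is detected by the minimal element; comparing componentwise through Definition \ref{defn:prodsupporder} identifies the two conditions. The implication $(2)\Rightarrow(3)$ is then immediate: assuming $\Xi_{max}(C)\le\Xi_{max}(D)$ and $\Xi_{min}(p_i)\ge\Xi_{min}(q_i)$, consequence (ii) gives $\max_{\Xi(C)}\langle\cdot,\lambda_r\rangle\le\max_{\Xi(D)}\langle\cdot,\lambda_r\rangle$ and $\langle\Xi_{min}(p_i),\lambda_r\rangle\ge\langle\Xi_{min}(q_i),\lambda_r\rangle$ for every $r$, and since every $L=(\gamma,w_1,\dots,w_n)\in\Lambda(\mc{C}_{n,d})$ has $\gamma\ge0$ and $w_i\ge0$, substituting into \eqref{eq:mu} yields $\mu^L((C,\dots),\lambda_r)\ge\mu^L((D,\dots),\lambda_r)$.

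The remaining implication $(3)\Rightarrow(2)$ is where the work lies, and I expect it to be the main obstacle. Fixing $r$, condition $(3)$ says the linear functional $\Phi_r(\gamma,\mathbf{w})=\sum_i w_i\,\delta_i(r)-\gamma\,\epsilon(r)$, with $\delta_i(r):=\langle\Xi_{min}(p_i),\lambda_r\rangle-\langle\Xi_{min}(q_i),\lambda_r\rangle$ and $\epsilon(r):=\max_{\Xi(C)}\langle\cdot,\lambda_r\rangle-\max_{\Xi(D)}\langle\cdot,\lambda_r\rangle$, is nonnegative on the cone $\Lambda(\mc{C}_{n,d})$. The curve comparison is easy to extract, since the ray $\mathbf{w}=0,\ \gamma>0$ lies in $\Lambda$ (all upper-bound inequalities of Theorem \ref{thm:ample_cone} hold when the point weights vanish); evaluating $\Phi_r$ there forces $\epsilon(r)\le0$ for all $r$, hence $\Xi_{max}(C)\le\Xi_{max}(D)$ by (ii). The point comparisons are delicate, because the $SL(3)$-ample cone forbids concentrating all the weight on a single $p_i$ — the constraint $w_i\le\tfrac{W+\gamma(2d-3)}{3}$ caps each $w_i$ in terms of the total — so no pure coordinate ray isolates $\delta_i(r)$. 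My plan is to extract $\delta_i(r)\ge0$ by a Farkas/dual-cone argument: nonnegativity of $\Phi_r$ on $\Lambda$ means $\Phi_r$ is a nonnegative combination of the facet functionals of Theorem \ref{thm:ample_cone}, and matching the $w_i$- and $\gamma$-coefficients — using the already-established sign $\epsilon(r)\le0$ to control the $\gamma$-contribution — pins down the sign of each $\delta_i(r)$. Equivalently, one argues by contrapositive: if some $\Xi_{min}(p_i)\not\ge\Xi_{min}(q_i)$, pick $r_0$ with $\delta_i(r_0)<0$ and build an explicit interior linearization of $\Lambda$ giving $p_i$ the largest weight the cone permits while keeping the curve weight small, so that $\Phi_{r_0}<0$, contradicting $(3)$. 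Checking that such a linearization actually stays inside $\Lambda(\mc{C}_{n,d})$ is the one genuinely cone-dependent step, and it is exactly where the explicit inequalities of Theorem \ref{thm:ample_cone} must be invoked.
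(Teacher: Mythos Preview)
Your treatment of the forward implications is considerably more careful than the paper's, which disposes of the entire lemma in a single sentence (``a direct application of Definition~\ref{defn:prodsupporder} to Equation~\ref{eq:mu}''). The $(u,v)$-reparametrization making the Mukai order into the coordinatewise order, and exhibiting $\langle\,\cdot\,,\lambda_r\rangle$ as an affine form with nonnegative coefficients whose direction sweeps the first quadrant, is a clean way to see why the extremal supports already compute $\mu$; your arguments for $(1)\Leftrightarrow(2)$ and $(2)\Rightarrow(3)$ are correct and more explicit than anything in the paper.

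You are right that $(3)\Rightarrow(2)$ is the nontrivial direction, and in fact the obstacle is worse than your outline suggests. First there is a logical-order issue: you propose to invoke the facet description of $\Lambda(\mc{C}_{n,d})$ from Theorem~\ref{thm:ample_cone}, but that theorem is proved \emph{after} this lemma and relies (via Lemma~\ref{lemma:semistable_generic}) on the forward direction of it. Second, even granting Theorem~\ref{thm:ample_cone}, the Farkas/contrapositive plan does not obviously close. The inequalities of $\Lambda$ force $w_i\le\gamma(d-2)$, so you can never let $\gamma\to 0$ while keeping a single $w_i$ positive; the coordinate ray that would isolate $\delta_i(r)$ is simply not in the cone. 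Along the ray with all $w_j=0$ for $j\neq i$, condition~$(3)$ gives only $(w_i/\gamma)\,\delta_i(r)\ge\epsilon(r)$ with $w_i/\gamma\le d-2$, and since you have already shown $\epsilon(r)\le 0$, a strictly negative $\delta_i(r)$ is compatible with this whenever $\epsilon(r)\le(d-2)\,\delta_i(r)$. Nothing in your sketch rules this out, and the incidence constraint $p_i\in C$---the only mechanism that could couple the point and curve supports enough to save the implication---is not invoked by you or by the paper.

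Two remarks put this in perspective. The paper only ever \emph{uses} the forward direction (in Lemma~\ref{lemma:semistable_generic} and in the wall computations of Section~\ref{sec:innerwalls}), so the full equivalence is stronger than what the arguments require. And if one reads~$(3)$ as quantifying over all $\mathbf{w}=(\gamma,w_1,\dots,w_n)$ with nonnegative entries---which is where $\mu^{\mathbf{w}}$ is defined in~\eqref{eq:mu}---rather than only over $\Lambda(\mc{C}_{n,d})$, then $(3)\Rightarrow(2)$ \emph{is} direct: evaluate at the coordinate rays to extract $\epsilon(r)\le 0$ and each $\delta_i(r)\ge 0$, then apply your consequence~(ii). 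That reading would make the paper's one-line proof honest; as literally stated, the reverse implication is not established.
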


\begin{proof}
    This is a direct application of Definition \ref{defn:prodsupporder} to Equation \ref{eq:mu}.
\end{proof}

  Since $\Xi_d$ are finite sets, for a fixed degree $d$ there are only finitely many values of $\Xi(g \cdot (C, p_1, \dots p_n)), g \in SL(3).$ It can be seen (Theorem \ref{thm:generic}, Lemma \ref{lemma:TriangleStbz}) that in practice, finding the maximal values of $\Xi(g \cdot (C, p_1, \dots p_n))$ involves an analysis of the geometry of the marked curve and its intersection with the flag $\{(1:0:0)\} \subset \{z=0\} \subset \Pj^2$. We obtain a finite list of maximal values of  $\mu^L(g \cdot (C, p_1, \dots p_n), \lambda_r),$ considered as functions of $r$ and $L$. These functions are piecewise linear in $r$. Evaluating them at the critical values of $r$ then gives a list $\{F_1, \dots , F_s\}$ of functions of $\mathbf{w} = (\gamma, w_1, \dots , w_n)$, such that $(C, p_1, \dots, p_n)$ is $L_{\mathbf{w}}$ (semi-)stable if and only if $F_j(\mathbf{w})$ is negative (non-positive) for all $j \in \{1, \dots ,s\}$.

In our first example we compute the (semi-)stable locus of a smooth irreducible marked curve with marked points in general position. We will later prove that these curves are semi-stable throughout $\Lambda(\mc{C}_{n,d})$ (Lemma \ref{lemma:semistable_generic}).

Let $(C, p_1 , \dots , p_n)$ be a marked plane curve of degree $d \geq 3$. Suppose $C$ is smooth and the points are general in the following sense:
    \begin{itemize}
        
    \item The $p_i$ are distinct 
    \item No point lies on a flex line of $C$
    \item No three points lie on a line
    \item and no two points lie on a line which is tangent to $C$.
    \end{itemize} 

    Recall that a \textit{flex line} of a plane curve of multiplicity $m$ is a line that intersects the curve at a \textit{flex point} with multiplicity $m>2$.

    \begin{rmk}
        
    The fact that these conditions are general can be seen by induction on $n$. Since a smooth plane curve has finitely many inflection points, the base case is immediate. For the inductive step we show that for a marked curve $(C, p_1, \dots, p_{n-1})$ satisfying these conditions, there is a Zariski open subset of $C$ on which we may pick an $n^{th}$ point $p_n$ such that the conditions are satisfied. There are clearly only finitely many points of $C$ which violate the first two conditions. Only finitely many points violate the third condition because for $p_i, p_j \in \{p_1, \dots , p_{n-1}\}$, the line through $p_i$ and $p_j$ intersects $C$ in at most $d$ points. To satisfy the fourth condition the point $p_n$ must not lie on any line tangent to $p_1, \dots , p_{n-1}$, which precludes finitely many points, and the tangent line to $p_n$ must not intersect any of the $p_1, \dots , p_{n-1}$. However, the number of tangent lines of $C$ which go through $p_i$ can be shown to be finite as a consequence of Bezout's Theorem. It is bounded by the \textit{class} of $C$, which is $d(d-1)$ \cite[Example 10.4.1]{FultonInt}.
    \end{rmk}

\begin{thm}\label{thm:generic}
The general marked curve $(C,p_1, \dots, p_n)$ is $L$ stable if and only if the weights $\gamma, w_1, \dots , w_n$ are all positive and for each distinct pair $i,j \in \{1, \dots , n\}$, $3w_i - \sum_1^k w_k + 3 \gamma -2 \gamma d < 0, \  $ $2w_i - \sum_{k=1}^n w_k - \gamma d + 2 \gamma < 0,  \ $ and $ \ 3w_i + 3w_j - 2 \sum_1^n w_k - \gamma d < 0$.
    
\end{thm}
\begin{proof}
 Our first step is to use the geometry of $(C,p_1, \dots , p_n)$ to determine the maximum possible supports of $g \cdot (C,p_1, \dots , p_n)$, in the sense of Definition \ref{defn:prodsupporder}. Since the points $p_1, \dots , p_n$ are distinct and no three of them lie on a line, at most two of the points $g \cdot p_i$ can lie on the line $\{z=0\}.$ The rest of the points are supported on $z$. Therefore, the term $w_i \min \{ \langle x_i , \lambda_r \rangle \ | \ x_i \in \Xi(g \cdot p_i) \}$ in equation \ref{eq:mu} will equal $w_i \langle z, \lambda_r \rangle = w_i(-1-r)$ for at least $n-2$ values of $i \in \{1, \dots, n\}$.
      
      The curve $C$ is smooth, so $g \cdot C$ cannot be singular at $(1:0:0)$ for any $g \in SL(3)$. By the Jacobian Criterion for smoothness this implies the support of $g \cdot C$ must contain $x^d, x^{d-1}y,$ or $x^{d-1}z$. If $g_0 \in SL(3)$ is a matrix such that $g_0 \cdot C$ is not incident with the point $(1:0:0)$ then $\Xi(g_0 \cdot C)$ will contain the monomial $x^d$ and the maximal support of $g_0 \cdot C$ will therefore be $\{x^d\}$ (Definition \ref{defn:maxsup}). In this case, the maximal support of $g_0 \cdot (C, p_1 \dots , p_n)$ is at most $(\{x^d\}, \{x\}, \{y\}, \{z\}, \dots , \{z\})$, up to a permutation of the points. However, this is not optimal because we can decrease $\Xi_{max}(g \cdot C)$ without decreasing any of the $\Xi_{min}(g \cdot p_i):$
      
      For any $i,j \in \{1, \dots ,n\},$ let $g_1 \in SL(3) $ be a matrix that takes $p_i$ to $(1:0:0)$ and the line $\overline{p_ip_j}$ to $\{z=0\}$. Then, since $(1:0:0) = g_1 \cdot p_i$ lies on $g_1 \cdot C$, the monomial $x^d$ is not in the support of $g_1 \cdot C.$ In fact, we know the maximal support of $g_1 \cdot C$ is $\{x^{d-1}y\}.$ Suppose, by contradiction, $\Xi(g_1 \cdot C)$ does not contain $x^d$ or $x^{d-1}y$. Then it must contain $x^{d-1}z$ by the Jacobian Criterion for smoothness at $(1:0:0).$ But then the tangent cone at $(1:0:0)$ would be the line $\{z=0\},$ however $g_1 \cdot p_i$ and $g_1 \cdot p_j$ lie on this tangent line, contradicting our assumption on the generality of the marked points. Therefore the maximal support of $g_1 \cdot (C, p_1, \dots , p_n)$ is $(\{x^{d-1}y\}, \{x\}, \{y\}, \{z\}, \dots , \{z\}),$ where by abuse of notation $\{x\}$ corresponds to the minimal support of $p_i$ and $\{y\}$ corresponds to the minimal support of $\{p_j\}$. Since this is greater than $\Xi_{max}(g_0 \cdot (C, p_0, \dots , p_n)),$ we may disregard any $g_0 \in SL(3)$ that does not take a point of $C$ to $(1:0:0).$

   We are left to consider the $g_2 \in SL(3)$ such that $x^{d-1}z$ is in the maximal support of $g_2 \cdot C$. Then the maximal support of $g_2 \cdot C$ cannot contain $x^d$ nor $x^{d-1}y$. However $\Xi_{max}(g_2 \cdot C)$ must contain $x^{d-m}y^m$ for some $m \geq 2$; otherwise the line $\{z=0\}$ would be an irreducible factor of $C$ because $z$ would factor out of every monomial, however we know that $C$ is irreducible. If the maximal support of $g_2 \cdot C$ is $\{ x^{d-1}z, x^{d-m}y^m \}$ then the line $\{z=0\}$  intersects $g_2 \cdot C$ at the point $(1:0:0)$ with multiplicity $m$. So $\{z=0\}$ is tangent to $g_2 \cdot C$ at $(1:0:0)$ if $m=2$ and inflectional to $C$ at $(1:0:0)$ if $2< m \leq d$. 
   If $m>2$ then the minimal support of all the points is $\{z\}$ because none of the points can lie on the flex line $\{z=0\}$. If $m=2$ then the minimal support of the points $p_1, \dots , p_n$ is at most $\{x\}, \{z\}, \dots , \{z\}$ up to permutation, because no two points lie on the tangent line $\{z=0\}$. Therefore, the maximal values of $\Xi_{max}(g \cdot (C, p_1 , \dots , p_n))$ are
   \begin{itemize}
       \item $(\{x^{d-m}y^m, x^{d-1}z\}, \{z\}, \dots , \{z\})$, $2 < m \leq d$, 
       \item $(\{x^{d-2}y^2, x^{d-1}z\}, \{x\}, \{z\}, \dots , \{z\})$,
       \item and $(\{x^{d-1}y\}, \{x\}, \{y\}, \{z\}, \dots , \{z\})$
   \end{itemize}
   up to a permutation of the points.

\begin{enumerate}[label= (Case \arabic*):]
    \item Suppose the maximal support of $g_2 \cdot (C, p_1, \dots , p_n)$ is $(\{x^{d-m}y^m, x^{d-1}z\}, \{z\}, \dots , \{z\})$, for $2 < m \leq d$. Then using equation \ref{eq:mu},
    $$\mu^L (g_2 \cdot (C, p_1, \dots , p_n), \lambda_r) = (-1-r) (\sum_1^n w_i) - \gamma \max \begin{cases} (d-m) + mr \\
        d-2-r
    \end{cases}$$
    $$= - \sum_1^n w_i - \gamma d + \begin{cases}
       \gamma m + r(- \sum_1^n w_i - \gamma m)  \ \ \ \text{if} \ r \geq \frac{m-2}{m+1} \\
         2 \gamma  + r(- \sum_1^n w_i + \gamma)\ \ \ \text{if} \ r \leq \frac{m-2}{m+1}
    \end{cases}.$$
Because the coefficient of $r$ is negative when $r \geq \frac{m-2}{m+1},$ the above numerical function is maximized for $r \in [\frac{-1}{2}, 1]$ when $r$ equals $\frac{m-2}{m+1}$ or $\frac{-1}{2},$ giving maximal values \\
 $F_1 := \mu^L (g_2 \cdot (C, p_1, \dots , p_n), \lambda_{\frac{m-2}{m+1}} ) = \frac{-2m+1}{m+1} \sum_1^n w_i - \gamma d + \frac{ 3 \gamma m}{m+1}, $ and \\
 $F_2 := \mu^L (g_2 \cdot (C, p_1, \dots , p_n), \lambda_{\frac{-1}{2}} ) = \frac{-1}{2}\sum_1^n w_i - \gamma d + \frac{3}{2} \gamma .$

\item Suppose the maximal support of $g_2' \cdot (C, p_1, \dots , p_n)$ is $(\{x^{d-2}y^2, x^{d-1}z\}, \{z\}, \dots , \{x\}, \dots , \{z\})$, where $\{x\} = \Xi_{min}(p_i)$. Then 
$$\mu^L (g_2' \cdot (C, p_1, \dots , p_n), \lambda_r) = w_i + ((\sum_{j=1}^n w_j) - w_i)(-1-r) - \gamma \max \begin{cases}
    d-2 +2r \\
    d-2-r
\end{cases}$$


$$ = 2w_i - \sum_{j =1}^n w_j - \gamma d + 2 \gamma + \begin{cases}
    r(w_i - \sum_{j=1}^n w_j - 2 \gamma) \ \ \ \text{if} \ r \geq 0 \\
     r(w_i - \sum_{j=1}^n w_j + \gamma) \ \ \ \text{if} \ r \leq 0 
\end{cases}.$$
This is maximized when $r$ equals $\frac{-1}{2}$ or $0$, giving maximal values \\
$F_3 := \mu^L (g_2' \cdot (C, p_1, \dots , p_n), \lambda_{\frac{-1}{2}})= \frac{3}{2}w_i - \frac{1}{2}\sum_{j=1}^n w_j + \frac{3}{2} \gamma -  \gamma d$, and \\
$F_4 := \mu^L (g_2' \cdot (C, p_1, \dots , p_n), \lambda_0) = 2w_i - \sum_{j=1}^n w_j - \gamma d + 2 \gamma.$

\item  Suppose the maximal support of $g_1 \cdot (C, p_1, \dots , p_n)$ is $(\{x^{d-1}y\},  \{z\}, \dots , \{x\}, \dots , \{y\}, \dots ,\{z\})$, where $\{x\} = \Xi_{min}(p_i)$ and $\{y\} = \Xi_{min}(p_j)$. Then
$$\mu^L (g_1 \cdot (C, p_1, \dots , p_n), \lambda_r) = w_i + rw_j   + (-1-r)((\sum_{k=1}^n w_k)- w_i - w_j) - \gamma (d-1 + r)$$
$$ = 2w_i + w_j - \sum_1^n w_k  - \gamma d + \gamma       + r(w_i + 2w_j - \sum_1^n w_k - \gamma   ).$$

This function is linear in $r$ so it is maximized at one of the endpoints, $r=1$ or $r= \frac{-1}{2},$ giving maximal values \\
$F_5:=\mu^L ( g_1 \cdot (C, p_1, \dots , p_n), \lambda_1) = 3w_i + 3w_j - 2 \sum_1^n w_k - \gamma d,$ and \\
$F_6 :=\mu^L ( g_1 \cdot (C, p_1, \dots , p_n), \lambda_{\frac{-1}{2}}) = \frac{3}{2}w_i - \frac{1}{2}\sum_1^n w_k + \frac{3}{2} \gamma - \gamma d.$
    
\end{enumerate}

The functions $F_1 (\gamma, w_1, \dots , w_n) , \dots, F_6 (\gamma, w_1, \dots , w_n)$ are the maximal values of $\mu^L(g \cdot (C, p_1, \dots , p_n), \lambda_r)$ for all $g \in SL(3)$ and $r \in [\frac{-1}{2},1]$. The marked curve $(C, p_1, \dots , p_n)$ is therefore stable where these functions are all negative. By our assumptions that the weights $\gamma, w_1, \dots , w_n$ are all positive, $2 < m \leq d,$ and $d \geq 3$, the functions $F_1$ and $F_2$ are always negative. So the locus of line bundles in $NS^{SL(3)}(\mc{C}_{n,d})_{\Q}$ for which $(C, p_1, \dots , p_n)$ is stable is given by the three families of inequalities, for each $i,j \in \{1, \dots n\}, i \neq j$, $F_3 = F_6 < 0; F_4 < 0;$ and $F_5 < 0  $. Clearing denominators gives $3w_i - \sum_1^k w_k + 3 \gamma -2 \gamma d < 0;$ $2w_i - \sum_{k=1}^n w_k - \gamma d + 2 \gamma < 0;$ and $ 3w_i + 3w_j - 2 \sum_1^n w_k - \gamma d < 0$, for each distinct pair $i,j \in \{1, \dots , n\}$.
 \end{proof}

\subsection{Preliminary results on the geometry of plane cubics} \label{sec:cubicgeom}

\begin{lemma}\label{lemma:classofcubics}
  Let $C$ be a plane cubic. If $C$ is nonsingular, it is projectively equivalent to $\{z^2x + y^3 + ax^2y + bx^3=0 \}$ for some $a,b \in \C$ with $4a^3 + 27b^2 \neq 0$. If $C$ is singular then it is projectively equivalent to one of the following 
\begin{align*}
\{xz^2 + y^3 + xy^2 =0 \} && \{xz^2 + y^3= 0\}  && \{xyz+y^3 = 0\} && \{x^2z - xy^2=0\}
\end{align*}
  \begin{align*}
  \{ xyz = 0\} && \{ x^2y + xy^2 =0\} && \{x^3 + x^2y=0  \} && \{x^3 = 0\}.    
  \end{align*}
  These curves are depicted with their singularities labeled in Figure \ref{fig:cubics}.
\end{lemma}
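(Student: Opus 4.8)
The plan is to classify the homogeneous cubic form $F$ defining $C$ by its factorization over $\C$ and to bring each type into the listed normal form using the action of $\SL(3)$ (equivalently $\mathrm{PGL}(3)$) on $\Pj^2$. At the top level I split into two cases according to whether $F$ is irreducible. Since $\deg C = 3$, a reducible $F$ is either a product of three linear forms or the product of a linear form with an irreducible conic; and because repeated factors are allowed, the reducible possibilities are exactly: three distinct lines (concurrent or not), a line times a smooth conic, $\ell^2 m$ with $\ell \neq m$, and $\ell^3$. Here I use that an irreducible conic is automatically smooth, since a singular conic splits off a line and is therefore reducible or nonreduced.

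For the reducible cases I would normalize each configuration by incidence geometry, using that $\mathrm{PGL}(3)$ acts transitively on projective frames and that the stabilizer of a smooth conic acts on the conic's points as $\mathrm{PGL}_2$ on $\Pj^1$. Three distinct non-concurrent lines form a coordinate triangle after a change of frame, giving $\{xyz=0\}$; three distinct concurrent lines can be moved so their common point is $[0:0:1]$, and then, acting by $\mathrm{PGL}_2$ on the three corresponding points of the pencil, to $\{xy(x+y)=0\}=\{x^2y+xy^2=0\}$. For a line and a smooth conic, the line meets the conic either in two distinct points (secant) or in one point of multiplicity two (tangent); since the conic's stabilizer is $3$-transitive on its points, all secants are equivalent and all tangents are equivalent, yielding $\{y(xz+y^2)=0\}=\{xyz+y^3=0\}$ and $\{x(xz-y^2)=0\}=\{x^2z-xy^2=0\}$ respectively. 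Finally $\ell^2 m \cong \{x^2(x+y)=0\}=\{x^3+x^2y=0\}$, since any two distinct lines form a standard pair under $\mathrm{PGL}(3)$, and $\ell^3 \cong \{x^3=0\}$.

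For the irreducible case I would first pin down the singular locus by B\'ezout. A point of multiplicity $3$ would force $C$ to be three lines through it, contradicting irreducibility, so any singular point has multiplicity $2$; and if there were two such points, the line joining them would meet $C$ with multiplicity $\geq 4$, again forcing a line component. Hence an irreducible cubic is smooth or has a unique double point, whose tangent cone is two distinct lines (a node) or a double line (a cusp). Moving the singular point to $[1:0:0]$, the form becomes $F = x\,q(y,z) + c(y,z)$ with $q$ the quadratic tangent cone and $c$ a binary cubic; normalizing $q$ to $y^2+z^2$ (node) or $z^2$ (cusp) and using irreducibility to reduce $c$ to $y^3$ gives $\{xz^2+y^3+xy^2=0\}$ and $\{xz^2+y^3=0\}$. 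For smooth $C$ I would use that the Hessian of $F$ is a nonzero cubic (nonvanishing because $C$ is smooth) meeting $C$ in nine flex points, move a flex to $[0:0:1]$ with flex tangent $\{x=0\}$ so that $F|_{x=0}$ is the perfect cube $y^3$, rescale the coefficient of $xz^2$ (nonzero by smoothness at the flex) to $1$, and then complete the square in $z$ and depress in $y$ to reach $\{xz^2+y^3+ax^2y+bx^3=0\}$. In the chart $x=1$ this is $z^2=-(y^3+ay+b)$, which is smooth exactly when $y^3+ay+b$ has distinct roots, i.e.\ when $4a^3+27b^2\neq 0$.

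I expect the main obstacle to be the smooth case: establishing existence of a flex through the Hessian and carrying out the completing-the-square reductions to the precise Weierstrass shape, together with the identification of the discriminant condition $4a^3+27b^2\neq 0$ with smoothness. By contrast, the reducible cases and the node/cusp normalizations reduce to routine transitivity arguments; the only care needed there is \emph{completeness}, namely verifying that degenerate sub-configurations (a ``conic'' that is really two lines, or the tangent-versus-secant dichotomy) are each routed to the correct representative and that no factorization type is omitted.
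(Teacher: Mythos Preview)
Your argument is correct and follows the standard route (case split by factorization type, then normalization via the $\mathrm{PGL}(3)$-action and, in the smooth case, a flex point from the Hessian followed by Weierstrass reduction). The paper itself does not prove this lemma at all: it simply cites the classification in \cite{Dol03}, Chapter~10.3. So your proof is not a ``different approach'' so much as an actual proof where the paper gives only a reference; what you have sketched is essentially the content of that reference, with the advantage of being self-contained. The only places to tighten are the reductions of the binary cubic $c(y,z)$ in the nodal and cuspidal normalizations---you assert ``reduce $c$ to $y^3$'' a bit quickly, and in the nodal case one really needs a further diagonalizing substitution (e.g.\ passing to eigencoordinates of $y^2+z^2$) together with an $x$-shift to kill the cross terms---but these are routine computations, not gaps.
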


\begin{proof}
Plane cubics are classified in detail in Chapter 10.3 of \cite{Dol03} .
\end{proof}

As a result of Lemma \ref{lemma:stab}, the GIT inner-walls correspond to marked plane curves $\mathbf{x}$ with positive dimensional $SL(3)$ stabilizer $SL(3)_\mathbf{x}$. Suppose $\mathbf{x} = (C, p_1, \dots , p_n)$ has a positive dimensional stabilizer. Then the curve $C$ in particular must have positive dimensional stabilizer. In the case of curves of degree $d=3$ there are only five reduced curves with positive dimensional stabilizer, whereas non-reduced marked cubics are unstable for all $L$ and therefore do not give rise to GIT walls.

\begin{lemma}\label{lemma:CubicPositStbz}
 Let $C$ be a reduced plane cubic with positive dimensional stabilizer. Then $C$ is projectively equivalent to one of the following:  
 \begin{align*}
   \{xz^2 + y^3 =0\} &&  \{xyz+y^3 = 0\} && \{x^2z - xy^2 =0\} && \{xyz =0\} && \{x^2y+xy^2 = 0\}.
 \end{align*}
\end{lemma}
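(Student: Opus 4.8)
The plan is to reduce to the finite list of projective normal forms supplied by Lemma \ref{lemma:classofcubics} and then, for each normal form $C = V(f)$, decide whether the stabilizer $SL(3)_C = \{g \in SL(3) : g \cdot C = C\}$ is positive-dimensional. Working over $\C$, this stabilizer is a reduced algebraic subgroup, so $\dim SL(3)_C \geq 1$ exactly when its Lie algebra $\mf{g}_f := \{A \in \mf{sl}_3 : A \bullet f \in \C \cdot f\}$ is nonzero, where $A \bullet f$ is the induced infinitesimal action on $\Gamma(\Pj^2, \mc{O}(3))$. In the spirit of the paper's torus computations it is enough, for the "positive" direction, to exhibit a nontrivial one-parameter subgroup $\lambda = \diag\{t^a, t^b, t^c\}$ with $a+b+c = 0$ that fixes the line $\C f$, i.e. scales $f$ by a single power of $t$; any such $\lambda$ preserves $C$ and witnesses positive-dimensionality.

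For the five curves in the statement this is an immediate check. The cusp $\{xz^2 + y^3 = 0\}$ is preserved by $\diag\{t^{-5}, t, t^4\}$ (it scales $f$ by $t^{-3}$); the triangle $\{xyz = 0\}$ is fixed by the entire maximal torus $\diag\{t^a, t^b, t^c\}$; the three concurrent lines $\{x^2 y + xy^2 = 0\}$ are preserved by $\diag\{t, t, t^{-2}\}$; the conic-plus-secant $\{xyz + y^3 = 0\} = y(xz+y^2)$ is preserved by $\diag\{t, 1, t^{-1}\}$ (which fixes $f$); and the conic-plus-tangent $\{x^2 z - xy^2 = 0\} = x(xz - y^2)$ is preserved by the same $\diag\{t, 1, t^{-1}\}$ (scaling $f$ by $t^{-1}$). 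This establishes that each of the five listed cubics has positive-dimensional stabilizer.

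It remains to prove the converse, that no other \emph{reduced} normal form has positive-dimensional stabilizer. The reduced normal forms not on the list are the smooth family and the irreducible nodal cubic $\{xz^2 + y^3 + xy^2 = 0\}$, and for both I would argue finiteness through the automorphism theory of the curve rather than a torus computation (which alone would not rule out one-parameter subgroups outside the chosen torus). A projective automorphism of $C$ is an abstract automorphism $\phi$ preserving the hyperplane class $\mc{O}_C(1)$. For a smooth cubic, $\mc{O}_C(1) \cong \mc{O}_C(3O)$ with $O$ a flex; writing $\phi$ as a translation followed by an element of the finite group $\mathrm{Aut}(C, O)$, the condition $\phi^* \mc{O}_C(1) \cong \mc{O}_C(1)$ forces the translation to be by a $3$-torsion point, and since $C[3]$ is finite, $SL(3)_C$ is finite. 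The same argument applies to the nodal cubic, whose smooth locus is $\G_m$ with $3$-torsion $\mu_3$. (Equivalently, one verifies $\mf{g}_f = 0$ in these two cases by a short linear-algebra computation.)

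The one point genuinely requiring care is the scope of the statement, and I expect it to be the main obstacle rather than any calculation. The classification in Lemma \ref{lemma:classofcubics} also contains the non-reduced cubics $\{x^3 = 0\}$ and $\{x^3 + x^2 y = 0\}$, and these do carry positive-dimensional (indeed larger) stabilizers, so the lemma must be read for reduced $C$. This is exactly the relevant case: the lemma is applied to curves that occur on a GIT wall, such a curve underlies a strictly semistable marked configuration, and non-reduced plane cubics are unstable for every linearization in $\Lambda(\mc{C}_{n,3})$, hence never appear. I would therefore either restrict the statement to reduced $C$ or insert the remark that non-reduced cubics are unstable and so excluded; pinning down this instability is the only step beyond the routine eigenvector and Lie-algebra bookkeeping above.
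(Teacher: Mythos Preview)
The paper's own proof is a bare citation to \cite{Ple99}, so your direct argument is by necessity a different route, and a welcome self-contained one. The computations are correct: each one-parameter subgroup you exhibit scales the corresponding normal form by a single character, and your finiteness argument for the smooth and irreducible nodal cubics via the $\mc{O}_C(1)$-preserving condition (equivalently the Lie-algebra check $\mf g_f=0$) is sound.

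You are also right to flag the non-reduced cubics $\{x^3=0\}$ and $\{x^2(x+y)=0\}$ from Lemma~\ref{lemma:classofcubics}: both carry positive-dimensional stabilizer yet are absent from the list, so the lemma is only literally true for reduced $C$, and restricting the hypothesis is the clean fix. Your alternative resolution---that marked non-reduced cubics are unstable for every linearization and hence irrelevant to the wall analysis---works immediately for the triple line (place it at $\{z=0\}$ and use $\lambda_1$: every marked point has minimal weight $1$ and the curve contributes $+6\gamma$, giving $\mu=W+6\gamma>0$). For the double-line-plus-line it is more delicate than you suggest: the stabilizer of $V(y^2z)$ is four-dimensional, and marked configurations with points at torus-fixed positions do have positive-dimensional stabilizer while failing to be uniformly unstable by any single one-parameter subgroup. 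Whether such configurations are genuinely unstable everywhere, or merely strictly semistable on loci already covered by the walls of Theorem~\ref{thm:GIT_walls}, needs a separate check. That, however, is a question about the completeness of the paper's subsequent wall list, not about your proof of the lemma, which stands once the hypothesis is read as ``reduced plane cubic''.
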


\begin{proof}
    \cite[Section 2]{Ple99}.
\end{proof}

\begin{figure}\label{figure: curvedepict}
    \centering
    \includegraphics[scale=0.4]{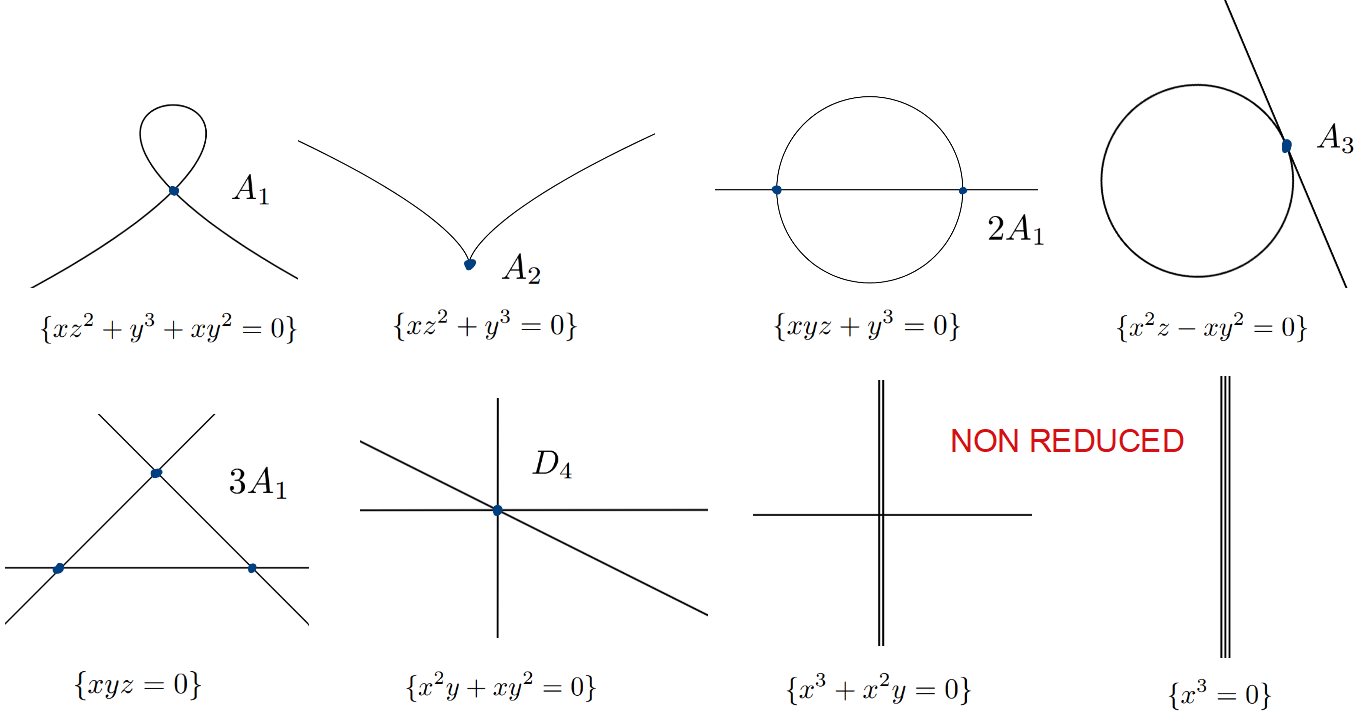}
    \caption{Singular cubics are determined up to projective equivalence by their singularities}
    \label{fig:cubics}
\end{figure}

We now add the marked points into consideration, finding the marked curves $(C,p_1, \dots p_n)$ with positive-dimensional $SL(3)$ stabilizer when $C$ is one of the five curves in Lemma \ref{lemma:CubicPositStbz}.

\begin{lemma}\label{lemma:CuspStbz}
    If the marked curve $(\{xz^2 + y^3 =0\}, p_1, \dots , p_n)$ has positive dimensional $SL(3)$ stabilizer then for each $i$, the marked point $p_i$ is either at the cuspidal singularity $(1:0:0)$ or at the smooth flex point $(0:1:0)$.
\end{lemma}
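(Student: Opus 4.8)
The plan is to determine the stabilizer of the cuspidal cubic $C = \{xz^2 + y^3 = 0\}$ inside $SL(3)$ explicitly, and then to impose the condition that this stabilizer also fix each marked point $p_i$. First I would compute the connected stabilizer subgroup $SL(3)_C$. Since $C$ is determined up to projective equivalence by its unique cusp at $(1:0:0)$ and its unique smooth flex at $(0:1:0)$, any $g \in SL(3)_C$ must permute these distinguished points; because they are geometrically distinct (one singular, one smooth), every element of the \emph{connected} stabilizer must fix each of them individually. I would then write down the most general matrix $g$ fixing the two points $(1:0:0)$ and $(0:1:0)$ and preserving the defining polynomial $xz^2 + y^3$ up to scalar, and solve the resulting polynomial identity. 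The expectation, consistent with Lemma~\ref{lemma:CubicPositStbz}, is that $SL(3)_C$ is a one-parameter group acting by the weighted scaling $ (x,y,z) \mapsto (t^a x, t^b y, t^c z)$ that rescales the three monomials $x z^2$ and $y^3$ by the same factor; imposing $a + 2c = 3b$ together with $a+b+c = 0$ (the $SL$ condition) pins down a $\G_m$ worth of symmetries.

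Next I would use this explicit description of $SL(3)_C$ to find its fixed locus on $\Pj^2$. A marked curve $(C, p_1, \dots, p_n)$ has positive-dimensional stabilizer precisely when the \emph{whole} one-parameter group $SL(3)_C$ fixes every $p_i$, so the heart of the argument is to compute $\mathrm{Fix}(SL(3)_C) \subset \Pj^2$. Since $SL(3)_C$ is a diagonal torus (in the chosen coordinates) acting with three distinct weights on $x, y, z$, its fixed points in $\Pj^2$ are exactly the coordinate points where a single coordinate is nonzero: $(1:0:0)$, $(0:1:0)$, and $(0:0:1)$. I would then check which of these lie on $C$: substituting shows $(1:0:0)$ (the cusp) and $(0:1:0)$ (the flex) satisfy $xz^2 + y^3 = 0$, while $(0:0:1)$ gives $z^2 \neq 0$ and so is off the curve. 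Because the $p_i$ are required to be points of $C$, the only admissible fixed points are the cusp and the flex, which is exactly the claim.

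The main obstacle I anticipate is the bookkeeping in the first step: verifying that the full stabilizer (not merely its identity component) is this $\G_m$, and in particular ruling out extra discrete or unipotent symmetries. The subtle point is that a priori $g$ could act on the tangent cone at the cusp or mix monomials in a way that is not diagonal; I would handle this by analyzing the local structure at the cusp — the tangent line $\{z=0\}$ and the cuspidal tangent direction are canonically determined, so any stabilizing $g$ must preserve the flag cusp $\subset$ cuspidal tangent line, which forces $g$ into upper-triangular form, and the flex condition then forces diagonality. Once $SL(3)_C$ is confirmed to be the diagonal torus, the fixed-locus computation is immediate, so the weight of the proof lies entirely in nailing down the stabilizer group precisely.
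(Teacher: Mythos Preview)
Your approach coincides with the paper's: determine the stabilizer $SL(3)_C$ (the paper simply cites \cite{Ple99} for $SL(3)_C$ being the image of the one-parameter subgroup $\diag\{5,-1,-4\}$, whereas you sketch how to derive this directly from the cusp/flex flag), observe that a diagonal $\G_m$ with three pairwise distinct weights fixes exactly the three coordinate points of $\Pj^2$, and then intersect with $C$.

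There is, however, a slip in your final check, and the statement of the lemma (and the paper's own proof) share it. At $(0:0:1)$ one has $xz^2 + y^3 = 0\cdot 1 + 0 = 0$, so this point \emph{does} lie on $C$; at $(0:1:0)$ one has $xz^2+y^3 = 0 + 1 = 1 \neq 0$, so this point does \emph{not}. The smooth flex is in fact $(0:0:1)$: in the chart $z=1$ the curve is $x=-y^3$, tangent to $\{x=0\}$ with contact order $3$. With this correction (replace $(0:1:0)$ by $(0:0:1)$ throughout) your argument goes through verbatim and is exactly the paper's.
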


\begin{proof}
    The stabilizer of $\{xz^2 + y^3=0\}$ is given in \cite[Section 2]{Ple99} as a $\C^*$ subgroup of $SL(3)$ equal to the image of the one-parameter subgroup $\diag\{5,-1,-4\}$. It is straightforward to check that the only points fixed by this subgroup are the points $(1:0:0)$, $(0:1:0)$, and $(0:0:1)$. Of these points, $(1:0:0)$ and $(0:1:0)$ lie on the cuspidal curve $\{xz^2 + y^3=0\}$.
\end{proof}

\begin{lemma}\label{lemma:ConicTransStbz}
    If the marked curve $(\{xyz +y^3 =0\}, p_1, \dots , p_n)$ has positive dimensional $SL(3)$ stabilizer then for each $i$, the marked point $p_i$ is at one of the two nodal singularities: $(1:0:0)$ or $(0:0:1)$.
\end{lemma}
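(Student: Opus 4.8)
The plan is to reduce everything to understanding the connected stabilizer of the curve $C = \{xyz + y^3 = 0\}$ and then intersecting its fixed locus with $C$. First I would factor $xyz + y^3 = y(xz + y^2)$, which exhibits $C$ as the union of the line $\{y = 0\}$ and the smooth conic $\{xz + y^2 = 0\}$. These two components meet at $(1:0:0)$ and $(0:0:1)$, and a quick check of tangent lines (the conic is tangent to $\{z=0\}$ at the former and to $\{x=0\}$ at the latter, while the linear component is $\{y=0\}$) shows both intersections are transverse, so these are exactly the two nodes of $C$.

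Next I would identify the identity component of $SL(3)_C$. Since $C$ appears in the list of Lemma \ref{lemma:CubicPositStbz}, its stabilizer is recorded in \cite{Ple99}; alternatively one computes directly that a diagonal matrix $\diag\{\alpha, \beta, \delta\} \in SL(3)$ fixes $C$ if and only if $\beta^2 = \alpha\delta$, which together with $\alpha\beta\delta = 1$ forces $\beta^3 = 1$. Thus the identity component is the torus $H := \{\diag\{\alpha, 1, \alpha^{-1}\} : \alpha \in \C^*\}$, the image of the normalized one-parameter subgroup $\diag\{1,0,-1\}$. That $H$ is the full connected stabilizer follows from the fact that the automorphisms of $\Pj^2$ fixing the smooth conic form a copy of $PGL_2$ acting on the conic as $\Pj^1$, and requiring the two nodes $\{(1:0:0),(0:0:1)\}$ to be preserved cuts this group down to a one-dimensional torus with no extra continuous freedom.

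With $H$ in hand the lemma is immediate. The stabilizer of the marked curve is $SL(3)_{\mathbf{x}} = SL(3)_C \cap \bigcap_i SL(3)_{p_i}$, so its identity component is a closed connected subgroup of $H \cong \C^*$. Because $\C^*$ has no positive-dimensional proper closed subgroup, positive-dimensionality of $SL(3)_{\mathbf{x}}$ forces its identity component to equal all of $H$; hence $H$ fixes every $p_i$. The torus $H$ acts with the distinct weights $1, 0, -1$ on $x, y, z$, so its fixed points in $\Pj^2$ are exactly the three coordinate points $(1:0:0)$, $(0:1:0)$, and $(0:0:1)$. Of these, $(0:1:0)$ fails to satisfy $xyz + y^3 = 0$, while $(1:0:0)$ and $(0:0:1)$ both lie on $C$ and are precisely its two nodes. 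Since each $p_i$ must lie on $C$, it is one of these two nodal points, as claimed.

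I would expect the only genuine subtlety to be the middle step: verifying that the connected stabilizer of $C$ is exactly the one-dimensional torus $H$ rather than something larger. The direct diagonal computation exhibits a torus inside $SL(3)_C$, and the dimension count via the conic's $PGL_2$ of automorphisms confirms there is no additional continuous (for instance unipotent) freedom, so the two observations together pin down $(SL(3)_C)^0 = H$ and close the argument.
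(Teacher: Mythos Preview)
Your proof is correct and follows essentially the same approach as the paper's: identify the connected stabilizer of $C$ as the one-dimensional torus $H = \{\diag\{\alpha,1,\alpha^{-1}\}\}$ (the image of $\diag\{1,0,-1\}$), then check that the only $H$-fixed points of $\Pj^2$ lying on $C$ are the two nodes. The paper simply cites \cite{Ple99} for the stabilizer, whereas you verify it directly via the diagonal computation together with the $PGL_2$ argument, and you make explicit the step that a positive-dimensional closed subgroup of $\C^*$ must be all of $\C^*$.
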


\begin{proof}
    The stabilizer of $\{xyz +y^3 =0\}$ is given in  \cite[Section 2]{Ple99} as a $\C^*$ subgroup of $SL(3)$ equal to the image of the one-parameter subgroup $\diag\{1,0,-1\}$. As in the above lemma, the only points fixed by this subgroup are the points $(1:0:0)$, $(0:1:0)$, and $(0:0:1)$. Of these points, $(1:0:0)$ and $(0:0:1)$ lie on the curve $\{xyz +y^3 =0\}$.
\end{proof}

\begin{lemma}\label{lemma:ConicTangStbz}
    If the marked curve $(\{x^2z - xy^2 =0\}, p_1, \dots , p_n)$ has positive dimensional $SL(3)$ stabilizer then there exists a line $l=\{z=-k^2x +2ky\}$ for some $k \in \C$, tangent to the conic, such that for each $i$, the marked point $p_i$ is either at the $A_3$ singularity $(0:0:1)$ or
    on the line $l$.
\end{lemma}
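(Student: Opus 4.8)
The plan is to follow the strategy of the two preceding lemmas---identify the connected stabilizer of the curve, then intersect with the stabilizers of the points---but with the essential new feature that here $\text{Stab}(C)$ is \emph{two}-dimensional rather than a single $\C^{*}$. First I would record that $C=\{x^2z-xy^2=0\}=\{x(xz-y^2)=0\}$ is the union of the line $\{x=0\}$ and the smooth conic $Q=\{xz-y^2=0\}$, tangent to each other at the point $N=(0:0:1)$ (the $A_3$ singularity). Since the two components have different degrees, any $g\in SL(3)$ preserving $C$ preserves each of them, hence preserves $Q$ and the tangent line $\{x=0\}$; because $\{x=0\}$ meets $Q$ only at $N$, this is equivalent to preserving $Q$ and fixing $N$. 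Using the parametrization $(s:t)\mapsto(s^2:st:t^2)$ of $Q$, the subgroup of $\Pj^2$-automorphisms preserving $Q$ is the image of $PGL(2)$ under the symmetric-square map
\[
\begin{pmatrix} a & b \\ c & d\end{pmatrix}\longmapsto
\begin{pmatrix} a^2 & 2ab & b^2 \\ ac & ad+bc & bd \\ c^2 & 2cd & d^2\end{pmatrix},
\]
and fixing $N$ (parameter $(s:t)=(0:1)$) forces $b=0$. Thus the connected stabilizer $B:=\text{Stab}(C)^{\circ}$ is the $2$-dimensional lower-triangular group of matrices $G(a,c,d)=\left(\begin{smallmatrix} a^2 & 0 & 0 \\ ac & ad & 0 \\ c^2 & 2cd & d^2\end{smallmatrix}\right)$, which acts on $Q\cong\Pj^1$ as the affine group $u\mapsto\alpha u+\beta$ in the coordinate $u=t/s$, with $N$ at $u=\infty$. (One may also quote \cite{Ple99} for this group.)

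Next I would reduce the statement to one-parameter subgroups. The marked curve has positive-dimensional stabilizer if and only if its identity component is positive-dimensional; that component is a connected subgroup of $B$ fixing every $p_i$, so it contains a connected $1$-dimensional subgroup $H\subseteq B$ with $p_i\in\mathrm{Fix}_{\Pj^2}(H)$ for all $i$. It therefore suffices to classify the connected $1$-dimensional subgroups of $B\cong\C^{*}\ltimes\C$ and compute their fixed loci in $\Pj^2$. These subgroups are exactly the translation (unipotent) subgroup $T$, and, for each $u_0\in\C$, a one-parameter torus $H_{u_0}$ fixing the point of $Q$ with parameter $u_0$ together with $N$; all such tori are conjugate in $B$ to the diagonal torus $c=0$.

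Then I would compute the fixed loci explicitly. For $T$, represented by the unipotent matrices $G(1,c,1)$, a direct kernel computation for $G(1,c,1)-I$ gives $\mathrm{Fix}_{\Pj^2}(T)=\{N\}$, so in this case all marked points sit at the $A_3$ singularity and the conclusion holds trivially. For a torus $H_{u_0}$, which is semisimple with distinct eigenvalues, the fixed locus is three points: $N$, the conic point $P_{u_0}=(1:u_0:u_0^2)$, and a third point $R$. The key geometric observation is that $R$ is the intersection of the tangent lines to $Q$ at the two fixed conic points $P_{u_0}$ and $N$: the tangent at $P_{u_0}$ is $l:=\{u_0^2x-2u_0y+z=0\}=\{z=-u_0^2x+2u_0y\}$ and the tangent at $N$ is $\{x=0\}$, so $R=(0:1:2u_0)\in l$. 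Since also $P_{u_0}\in l$, both non-node fixed points lie on the single tangent line $l$, and taking $k=u_0$ yields exactly the assertion that each $p_i$ is either at $N$ or on $l$.

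The main obstacle---and the reason this lemma is genuinely harder than Lemmas \ref{lemma:CuspStbz} and \ref{lemma:ConicTransStbz}---is precisely the $2$-dimensionality of $\text{Stab}(C)$: there is a $\Pj^1$-family of one-parameter subgroups, so one cannot simply read off a finite fixed-point set as in the $\C^{*}$ cases. The real work is organizing this family through its action as the affine group on $Q$, and then identifying the ``extra'' third fixed point $R$ of each torus $H_{u_0}$ with a point on the corresponding tangent line $l_{u_0}$, so that every admissible configuration of marked points is forced onto one tangent line together with the node.
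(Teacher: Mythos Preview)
Your argument is correct and follows the same overall strategy as the paper---compute $\mathrm{Stab}(C)$, then determine the fixed loci of its one-parameter subgroups---but the execution is organized differently. The paper works by a direct matrix ansatz: it writes down a generic lower-triangular matrix fixing the line and the point $(0:0:1)$, imposes the conic constraint to get the explicit $2$-parameter family, then solves the eigenvector equations $M\cdot p=\lambda p$ and observes that the resulting fixed points $(0:1:\tfrac{2b}{a-c})$ and $(1:\tfrac{b}{a-c}:(\tfrac{b}{a-c})^{2})$ depend only on the ratio $k=b/(a-c)$, which must be constant along any one-dimensional family. You instead realize $\mathrm{Stab}(C)^{\circ}$ as the image of the Borel of $PGL(2)$ under the symmetric-square map, interpret it as the affine group acting on $Q\setminus\{N\}\cong\mathbb{A}^{1}$, and thereby classify its one-dimensional subgroups a priori as either the unipotent radical or a torus $H_{u_{0}}$, before computing fixed loci. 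Your route is a bit more conceptual and cleanly separates the unipotent case (all points at $N$) from the torus case; the paper's brute-force eigenvector solve arrives at the same two extra fixed points without ever naming the affine-group structure. Both give the same tangent line $l$ with $k=u_{0}$.
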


\begin{proof}
    We compute the stabilizer of $C:=\{x^2z - xy^2 =0\}$ directly. Let $M \in SL(3)$ be a matrix fixing $\{x^2z - xy^2 =0\}$. Since the irreducible components of $\{x^2z - xy^2 =0\}$ have degrees $1$ and $2$, the linear automorphism $M$ must fix the conic $\{xz -y^2 =0\}$ and fix the line $\{x=0\}$, and therefore must fix their intersection point $(0:0:1)$. The matrices fixing $\{x=0\}$ and $(0:0:1)$ are those which are lower triangular. Since $M$ is in $SL(3)$ it has determinant equal to $1$ and must therefore be of the form
    $$M =\begin{bmatrix}
        a & 0 & 0 \\
        b & c & 0 \\
        d & e & \frac{1}{ac}
    \end{bmatrix}, \ a,c \neq 0.$$

The fact that $M$ fixes the conic $\{xz=y^2\}$ implies that $d= \frac{b^2}{a}, e= \frac{2bc}{a},$ and $c$ is a cube-root of unity. Indeed, the matrices
$$\bigg\{ \begin{bmatrix}
        a & 0 & 0 \\
        b & c & 0 \\
        \frac{b^2}{a} & \frac{2bc}{a} & \frac{1}{ac}
    \end{bmatrix} \ |  \ a,c \neq 0, \ c^3=1 \bigg\}$$
all fix the curve $C.$

If $M$ fixes a point $(x_0:y_0:z_0) \neq (0:0:1)$ then the linear equations $M \cdot \langle x_0, y_0, z_0 \rangle = \langle \lambda x_0, \lambda y_0, \lambda z_0 \rangle, \ \lambda \in \C^*$ and the cubic equation $x_0^2z_0-x_0y_0^2 =0$ can be solved to give $(x_0:y_0:z_0) = (0:1:\frac{2b}{a-c})$ or $(1:\frac{b}{a-c}:(\frac{b}{a-c})^2)$. If a positive-dimensional family of the stabilizer of $C$ fixes either of these points then $b$ must be a constant multiple of $a-c$. Then, for any $k \in \C$, taking $b := k(a-c)$, $c$ a cube-root of unity, and $a:=t$ a free parameter gives a one-dimensional subgroup of $SL(3)$ fixing the curve $C$ and the points $(0:0:1), (0:1:2k),$ and $(1:k:k^2),$
$$\bigg\{ \begin{bmatrix}
        t & 0 & 0 \\
        k(t-c) & c & 0 \\
        \frac{k^2(t-c)^2}{t} & \frac{2k(t-c)c}{t} & \frac{1}{tc}
    \end{bmatrix} \ | \  t \in \C^* \bigg\}.$$
Furthermore, we have shown that any one-dimensional family of matrices fixing $C$ and some point other than $(0:0:1)$ must be of this form. The fixed points $(0:1:2k)$ and $(1:k:k^2)$ lie on the line $l$ which is tangent to $C$ at  $(1:k:k^2)$.
\end{proof}

\begin{lemma}\label{lemma:TriangleStbz}
    If the marked curve $(\{xyz =0\}, p_1, \dots , p_n)$ has positive dimensional $SL(3)$ stabilizer then there is a permutation of coordinates $\{x_i, x_j, x_k\} := \{x,y,z\}$ such that for each $i$, the marked point $p_i$ is either at the node $\{x_i=x_j=0\}$ or on the opposite line $\{x_k=0\}$. 
\end{lemma}

\begin{proof}
    The stabilizer of $\{xyz=0\}$ is our diagonal torus $T$. A generic one-parameter subgroup $\diag\{a,b,-a-b\}$ fixes only the torus-invariant points $(1:0:0), (0:1:0),$ and $(0:0:1)$. However, if $a=b$ then $\diag\{a,a,-2a\}$ fixes all the points on the line $z=0$ in addition to the point $(0:0:1)$. Similar behavior occurs in the other two exceptional cases, when $a=-a-b$ and when $b=-a-b$.
\end{proof}

\begin{lemma}\label{lemma:D4Stbz}
    If the marked curve $(\{x^2y +xy^2 =0\}, p_1, \dots , p_n)$ has positive dimensional $SL(3)$ stabilizer then there exists a line $l$ such that for each $i$, the marked point $p_i$ is either at the singular point $(0:0:1)$ or on $l$.
\end{lemma}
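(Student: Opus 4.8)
The plan is to pin down the connected stabilizer of $C=\{x^2y+xy^2=0\}$ inside $SL(3)$ and then show that the fixed locus of any one of its one-parameter subgroups is a single line together with the singular point. First I would note that $C=\{xy(x+y)=0\}$ is the union of the three lines $\{x=0\},\{y=0\},\{x+y=0\}$, which are concurrent at the $D_4$ point $(0:0:1)$. As in Lemma \ref{lemma:ConicTangStbz}, I would compute the stabilizer directly: a matrix $M\in SL(3)$ fixing $C$ must fix the common point $(0:0:1)$ and permute the three lines, so its last column is $(0,0,\ast)^{T}$ and it is block lower triangular with upper-left $2\times2$ block $A$. Since the three linear forms $x,y,x+y$ do not involve $z$, the substituted polynomial $xy(x+y)\circ M^{-1}$ depends only on $A$, and preserving $C$ forces the image of $A$ in $PGL(2)$ to permute the three points $[1:0],[0:1],[1:1]\in\Pj^1$. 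Those symmetries form the finite group $S_3$, so on the identity component $A$ must be scalar; together with $\det M=1$ this gives
\[
\left\{\, M_{\lambda,e,f}:=\begin{bmatrix} \lambda & 0 & 0\\ 0 & \lambda & 0\\ e & f & \lambda^{-2}\end{bmatrix} \ \middle|\ \lambda\in\C^{\ast},\ e,f\in\C \,\right\},
\]
a three-dimensional solvable group isomorphic to $\C^{\ast}\ltimes\G_a^2$, with unipotent radical $U=\{M_{1,e,f}\}$.

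Next I would reduce to one-parameter subgroups. If $(C,p_1,\dots,p_n)$ has positive-dimensional stabilizer, then since that stabilizer lies in $SL(3)_C$ its identity component is a positive-dimensional connected subgroup of the group above fixing every $p_i$, and hence contains a one-dimensional connected subgroup $H$, which is either $\G_a$ or $\G_m$. In the $\G_a$ case $H$ is unipotent; since the eigenvalues of $M_{\lambda,e,f}$ are $\lambda,\lambda,\lambda^{-2}$, the unipotent elements are exactly $U$, so $H=\{M_{1,te_0,tf_0}\}$ for a fixed $(e_0,f_0)$, and a direct check shows its common fixed locus is the single line $l=\{e_0x+f_0y=0\}$, which passes through $(0:0:1)$. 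In the $\G_m$ case $H$ is a torus, and every torus of the connected solvable group $\{M_{\lambda,e,f}\}$ is conjugate to the standard maximal torus $\{\diag(\lambda,\lambda,\lambda^{-2})\}$; writing $H=g\{\diag(\lambda,\lambda,\lambda^{-2})\}g^{-1}$ with $g=M_{1,e,f}$ and noting that the fixed locus of the standard torus is $\{z=0\}\cup\{(0:0:1)\}$, the fixed locus of $H$ is $g\cdot\{z=0\}\cup\{(0:0:1)\}$, because $g$ fixes $(0:0:1)$. Setting $l:=g\cdot\{z=0\}$ again produces a line.

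In either case every $p_i$ lies in $l\cup\{(0:0:1)\}$, which is the claim. The step I expect to be the main obstacle is the structural analysis of the stabilizer: justifying that the $2\times2$ block $A$ is scalar on the identity component (via the finiteness of the symmetry group of three marked points of $\Pj^1$), and that each $\G_m\hookrightarrow\{M_{\lambda,e,f}\}$ is conjugate into the standard torus, so that its fixed locus is genuinely a line plus the singular point and nothing larger. Once the stabilizer is understood, the unipotent case and the remaining verifications are routine matrix computations.
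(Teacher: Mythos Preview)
Your proof is correct and takes a genuinely different, more structural route than the paper. The paper proceeds by direct case analysis: first it treats the case where the one-dimensional family fixes every point of a single linear component of $C$ (exhibiting explicit families of matrices), and then the case where fixed points lie on at least two components, in which case the family acts trivially on the set of components. There it writes the generic matrix as
\[
M=\begin{bmatrix} a & 0 & 0\\ 0 & a & 0\\ b & c & a^{-2}\end{bmatrix},
\]
solves $M\cdot p=p$ on each line, and checks by hand that the resulting smooth fixed points $(0{:}1{:}k')$, $(1{:}0{:}k)$, $(1{:}1{:}k+k')$ are collinear on $z=kx+k'y$, finally exhibiting the corresponding one-parameter family. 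You instead identify the identity component of $SL(3)_C$ as the solvable group $\C^\ast\ltimes\G_a^2$ via the finiteness of the symmetry group of three points in $\Pj^1$, reduce to a one-dimensional connected subgroup, and then invoke the $\G_a/\G_m$ dichotomy together with conjugacy of maximal tori in connected solvable groups to reduce the fixed-locus computation to two trivial cases. Your argument is cleaner and transports easily to analogous stabilizer questions; the paper's computation is more elementary (no appeal to conjugacy of tori or classification of one-dimensional groups) and makes the line $l=\{z=kx+k'y\}$ completely explicit, which is convenient for the later wall computations.
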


\begin{proof}
    The curve $C:=\{x^2y+xy^2=0\}$ contains three linear components, $\{x=0\}, \{y=0\}, $ and $ \{x+y=0\}$. Any element of its stabilizer must therefore permute these lines and fix their point of intersection $(0:0:1)$. If $g \in SL(3)$ fixes at least three points on one of these lines then it must fix the entire line because a linear automorphism of $\Pj^1$ is determined by the image of three points. Indeed, there are one-dimensional families of matrices that fix $C$ and all of the points on one of its linear components: Without loss of generality we may assume the fixed linear component is $\{x=0\}$. Then the families 
    $$\bigg\{ \begin{bmatrix}
        1 & 0 & 0 \\
        0 & 1 & 0 \\
        t & 0 & 1
    \end{bmatrix} \ | \ t \in \C^* \bigg\} \ \ \text{and} \ \ \bigg\{ \begin{bmatrix}
        \frac{1}{c^2} & 0 & 0 \\
        \frac{-1}{c^2} & c & 0 \\
        t & 0 & c
    \end{bmatrix} \ | \ t \in \C^*, \ c^3=-1\bigg \}$$ fix $C$ and the points on $\{x=0\}$. \\

    If, however, $g$ fixes points on two distinct linear components, other than the point $(0:0:1)$, then $g$ must permute the linear components trivially. The matrices that induce automorphisms of the lines $\{x=0\}$, $\{y=0\}$, and $\{z=0\}$ are of the form
    $$M = \begin{bmatrix}
        a & 0 & 0 \\
        0 & a & 0 \\
        b & c & \frac{1}{a^2}
    \end{bmatrix}.$$
Let $p \in C$ be a point fixed by $M$. Assume $p \neq (0:0:1)$. Then $M\cdot p = p$ implies that if $p \in \{x=0\}$ then $p= (0:1: \frac{a^2c}{a^3-1})$ , if $p \in \{y =0\}$ then $p = (1:0:\frac{a^2b}{a^3-1}),$ and if $p \in \{x+y = 0\}$ then $p = (1:1: \frac{a^2b + a^2c}{a^3-1})$. If there is a one-dimensional family of $SL(3)$  fixing two of these points then $a^2b$ must be a constant multiple $k$ of $a^3-1$ and $a^2c$ must be a constant multiple $k'$ of $a^3-1$ and, in fact, all three of these points will be fixed by such a family of matrices. The three fixed points $(0:1:k'), (1:0:k),$ and $(1:1:k+k')$ all lie on the line $z=kx+k'y$. We can take any $k,k' \in \C$ to construct a one-dimensional family in $SL(3)$ fixing the curve $C$, the point $(0:0:1),$ and the points $ (1:0:k), (0:1:k'),$ and $(1:1:k+k')$ on the line $z=kx+k'y$:
$$\bigg\{ \begin{bmatrix}
    t & 0 & 0 \\
    0 & t & 0 \\
    kt - \frac{k}{t^2} & k't - \frac{k'}{t^2} & \frac{1}{t^2}
\end{bmatrix} \ | \ t \in \C^* \bigg\}.$$

Therefore if a set of points is fixed by a one-dimensional family of $Stab_{SL(3)}(C)$ they must all either be at the origin $(0:0:1)$ or on some line $k''z= kx+k'y$. Note, the case where $k''=0$ corresponds to the first case of this proof in which a subgroup fixes a single linear component of $C$. 
    
\end{proof}


\section{ The SL(3)-ample cone} \label{subsec:outerwalls}

In order to prove Theorem \ref{thm:ample_cone}, we first establish a Lemma that $\Lambda(\mc{C}_{n,d})$ corresponds to the locus of $NS^{SL(3)}(\mc{C}_{n,d})_\Q$ at which a sufficiently general plane curve is semi-stable.

\begin{lemma}\label{lemma:semistable_generic}
    Let $(C, p_1 , \dots , p_n)$ be a marked plane curve of degree $d \geq 3$ that is generic in the sense of Theorem \ref{thm:generic}. Then $(C, p_1 , \dots , p_n)$ is semi-stable on the entirety of $\Lambda(\mc{C}_{n,d})$. Furthermore, if any curve $(D, q_1, \dots, q_n)$ is $L$ stable for some $L \in \Lambda(\mc{C}_{n,d})$ then $(C, p_1 , \dots , p_n)$ is as well.
\end{lemma}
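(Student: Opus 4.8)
The plan is to reduce both assertions to a single \emph{monotonicity principle}: for every linearization $L \in \Lambda(\mc{C}_{n,d})$ and every marked curve $(D, q_1, \dots, q_n)$, the generic curve $(C,p_1,\dots,p_n)$ satisfies
\[
\max_{g \in SL(3),\, r \in [-1/2,1]} \mu^L\big(g \cdot (C, p_1, \dots, p_n), \lambda_r\big)
\;\leq\;
\max_{g \in SL(3),\, r \in [-1/2,1]} \mu^L\big(g \cdot (D, q_1, \dots, q_n), \lambda_r\big).
\]
Once this is established, both claims follow from our Numerical Criterion (Theorem \ref{theorem:NC}). For semi-stability everywhere on $\Lambda(\mc{C}_{n,d})$: membership $L \in \Lambda(\mc{C}_{n,d})$ forces the semi-stable locus to be nonempty, so some $(D, q_1,\dots,q_n)$ has nonpositive right-hand side, whence the left-hand side is nonpositive and $(C, p_1,\dots,p_n)$ is semi-stable. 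For the second assertion: $L$-stability of $(D, q_1,\dots,q_n)$ makes the right-hand side strictly negative, forcing the same for $(C,p_1,\dots,p_n)$.

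To prove the principle I would first invoke Example \ref{ex:generic}, which identifies the left-hand maximum with $\max(F_1, \dots, F_6)$, the values arising from the three maximal support configurations of $g\cdot(C,p_1,\dots,p_n)$. A direct computation using $d \geq 3$, $2 < m \leq d$, and $\gamma, w_i \geq 0$ shows $F_1, F_2 \leq 0$ unconditionally, and strictly negative once $(\gamma, w_1,\dots,w_n) \neq 0$, which holds throughout the ample cone. Hence it suffices to dominate only $F_3, F_4, F_5$ by $\max_{g,r}\mu^L(g\cdot(D,q_1,\dots,q_n))$; in particular the delicate flex line configuration of Case $1$ never needs to be matched, which is what keeps the argument finite.

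The core step is therefore a support-domination argument built on Lemma \ref{lemma:maxmuxi} and the Mukai order. For the bound by $F_5$ (indexed by a pair $i \neq j$), choose $g' \in SL(3)$ carrying $q_i \mapsto (1:0:0)$ and $q_j \mapsto (0:1:0)$; since $(1:0:0)$ then lies on $g'\cdot D$, the monomial $x^d$ is absent from $\Xi(g'\cdot D)$, and because $x^{d-1}y$ is the unique Mukai-maximal degree-$d$ monomial other than $x^d$, the curve term is controlled by $\langle x^{d-1}y, \lambda_r\rangle$ while every remaining point support dominates $\{z\}$ in the Mukai order. For the bound by $F_3, F_4$, instead choose $g'$ carrying $q_i \mapsto (1:0:0)$ and the tangent line of $D$ at $q_i$ to $\{z=0\}$ (taking any completion if $D$ is singular at $q_i$), which additionally removes $x^{d-1}y$ from the support and leaves it dominated by $\{x^{d-2}y^2, x^{d-1}z\}$. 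In each case Lemma \ref{lemma:maxmuxi} yields $\mu^L(g' \cdot (D,q_1,\dots,q_n), \lambda_r) \geq \mu^L(g \cdot (C,p_1,\dots,p_n), \lambda_r)$ for all $r$, where $g$ realizes the corresponding maximal configuration of the generic curve; evaluating at the relevant critical value of $r$ then gives $\max_{g,r}\mu^L(g\cdot(D,q_1,\dots,q_n)) \geq F_5$, respectively $\geq F_3$ and $\geq F_4$.

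The main obstacle I anticipate is verifying that $\Xi_{max}(g' \cdot D)$ is genuinely dominated by the model support in the Mukai order, uniformly over all degenerations of $D$. This rests on the elementary but essential fact that every degree-$d$ monomial other than $x^d$ lies weakly below $x^{d-1}y$, and every such monomial of $x$-degree at most $d-2$ lies weakly below $x^{d-2}y^2$, for all $r \in [-1/2,1]$. It also requires a careful treatment of the boundary cases where $D$ is singular at $q_i$ (so the support shrinks further, which only helps) and where two marked points of $D$ coincide (so $g'$ and the resulting support must be chosen slightly differently while still producing a support no larger than the model). These checks are routine given the Mukai-order lemma, but they are where the argument must be made uniform.
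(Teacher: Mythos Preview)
Your proposal is correct and follows essentially the same route as the paper: both arguments reduce to the three maximal support configurations of Example \ref{ex:generic}, discard Case 1 (the flex configuration) as automatically negative when $d\ge 3$, and then for Cases 2 and 3 exhibit, for an arbitrary $(D,q_1,\dots,q_n)$, a $g'\in SL(3)$ sending $q_i$ to $(1:0:0)$ (with either the tangent line or the line through $q_j$ moved to $\{z=0\}$) so that Lemma \ref{lemma:maxmuxi} gives the needed support domination. Your explicit framing as a monotonicity principle and your remarks about singular/coincident degenerations of $(D,q)$ only helping the inequality are accurate refinements of the paper's more implicit treatment.
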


\begin{proof}
Let $L = (\gamma, w_1, \dots , w_n)$ be an $SL(3)-$linearized line bundles in $\Lambda(\mc{C}_{n,d})$. We show that $\mu^L(h \cdot (C, p_1, \dots , p_n), \lambda_r)$ is non-positive for all $h \in SL(3)$ and normalized one-parameter subgroups $\lambda_r$. In Theorem \ref{thm:generic}, we saw that as $h$ varies in $SL(3)$, the maximal values of $\Xi(h \cdot (p_1, \dots , p_n))$ are 
\begin{enumerate}[label=\Alph*]
    \item := $(\{x^{d-m}y^m, x^{d-1}z\}, \{z\}, \dots , \{z\})$ for $2 < m \leq d$, 
    \item := $(\{x^{d-2}y^2, x^{d-1}z\}, \{x\}, \{z\}, \dots , \{z\})$, and
    \item := $(\{x^{d-1}y\}, \{x\}, \{y\}, \{z\}, \dots , \{z\})$,
\end{enumerate}
up to a permutation of the order of the marked points. We show that in the above three cases, $\mu^L(h \cdot (C, p_1, \dots , p_n), \lambda_r)$ is negative for all $r \in [\frac{-1}{2}, 1]$. First, as calculated in Theorem \ref{thm:generic}, the assumptions that the weights $\gamma, w_1, \dots , w_n$ are all positive, $2 < m \leq d,$ and $d \geq 3$, imply that if the maximal support of $h \cdot (C, p_1, \dots , p_n)$ is A, then $\mu^L(h \cdot (C, p_1, \dots , p_n), \lambda_r)$ is negative.

Now assume that the maximal support of $h \cdot (C, p_1, \dots , p_n)$ is B or C. Since $L$ is in the $SL(3)-$ample cone, there exists some marked curve $(D, q_1, \dots , q_n)$ which is $L$ semi-stable. Then we can find a linear automorphism $h_1 \in SL(3)$ taking $q_1$ to $(1:0:0)$ such that $[z=0]$ is tangent to $h_1 \cdot D$ at $h_1 \cdot q_1$. This geometric property implies the maximal support of $h_1 \cdot (D, q_1, \dots , q_n)$ is at least $B$. Likewise, we can find a linear automorphism $h_2 \in SL(3)$ taking $q_1$ to $(1:0:0)$ and $q_2$ to a point on the line $\{z=0\}$ so that the maximal support of  $h_2 \cdot (D, q_1, \dots , q_n)$ is at least C. Therefore the maximal support of $h \cdot (C, p_1, \dots , p_n)$ is less than or equal to $max_{g \in SL(3)} (\Xi_{max}(g \cdot (D, q_1, \dots , q_n)))$. Since  $(D, q_1, \dots , q_n)$ is $L$ semi-stable, $\max_{g \in SL(3)} \mu^L (g \cdot (D, q_1, \dots , q_n), \lambda_r) \leq 0$. It then follows that $\mu^L(h \cdot (C, p_1, \dots , p_n), \lambda_r)$ is non-positive.
$$\max_{r\in [\frac{-1}{2},1]} \mu^L(h \cdot (C, p_1 , \dots , p_n), \lambda_r) \leq \max_{g \in SL(3), r \in [\frac{-1}{2},1]} \mu^L (g \cdot (D, q_1, \dots , q_n), \lambda_r) \leq 0.$$
We conclude that the maximum value of $\mu^L(h \cdot (C, p_1, \dots , p_n), \lambda_r)$ is non-positive for all $h \in SL(3)$. If $(D, q_1, \dots q_n)$ is $L$ stable then the above inequality on the right becomes strict, making the maximum value of $\mu^L(h \cdot (C, p_1, \dots , p_n), \lambda_r)$ negative.
\end{proof}

At this point we have done all the leg work for Theorem \ref{thm:ample_cone}.

\begin{thm}
\label{thm:ample_cone}
    The cone $\Lambda(\mc{C}_{n,d})$ is

\begin{align*}
 \bigg\{
(\gamma, w_1, \ldots, w_n)
\; \bigg| \;
   w_i \leq \frac{W+\gamma(2d-3)}{3} , \ w_i \leq \frac{W+ \gamma(d-2)}{2}, \ w_i+w_j \leq \frac{2W+ \gamma d}{3}, \ 
   0 < w_i, \ 0 < \gamma
 \bigg\},
\end{align*}
where $W = \sum_1^n w_i$ denotes the total weight of the points.
\end{thm}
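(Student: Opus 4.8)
The plan is to combine the explicit stability computation of Example \ref{ex:generic} with Lemma \ref{lemma:semistable_generic}, which singles out the generic marked curve as the one that is hardest to destabilize. The key observation is that $\Lambda(\mc{C}_{n,d})$, defined as the part of the $SL(3)$-ample cone lying in our $(n+1)$-dimensional subspace, coincides with the locus in that subspace at which a single fixed generic curve $(C, p_1, \ldots, p_n)$ (generic in the sense of Example \ref{ex:generic}) is semi-stable. First I would establish this identification by a two-sided inclusion. One direction is precisely the first assertion of Lemma \ref{lemma:semistable_generic}: for every $\mathbf{w} \in \Lambda(\mc{C}_{n,d})$ the generic curve is semi-stable, so $\Lambda(\mc{C}_{n,d})$ is contained in the semi-stable locus of that curve. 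For the reverse inclusion, if the generic curve is $L_{\mathbf{w}}$-semi-stable for some $\mathbf{w}$ in the subspace, then $\mc{C}_{n,d}^{ss}(L_{\mathbf{w}})$ is non-empty, so $\mathbf{w}$ lies in the $SL(3)$-ample cone and hence in $\Lambda(\mc{C}_{n,d})$.

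With this reduction, the second step is to read off the defining inequalities from Example \ref{ex:generic}. There the functions $F_1, \ldots, F_6$ were identified as the maximal values of $\mu^L(g \cdot (C, p_1, \ldots, p_n), \lambda_r)$ over all $g \in SL(3)$ and $r \in [-\tfrac{1}{2}, 1]$, so by Theorem \ref{theorem:NC} the generic curve is semi-stable exactly when $F_j(\mathbf{w}) \leq 0$ for every $j$. I would then note, as in the example, that under $\gamma \geq 0$, $w_i \geq 0$, $d \geq 3$, and $2 < m \leq d$ the functions $F_1$ and $F_2$ are automatically non-positive and so impose no condition. The remaining constraints $F_3 = F_6 \leq 0$, $F_4 \leq 0$, and $F_5 \leq 0$, imposed over all distinct $i, j$, rearrange after clearing denominators into the three displayed families $w_i \leq \tfrac{W + \gamma(2d-3)}{3}$, $w_i \leq \tfrac{W + \gamma(d-2)}{2}$, and $w_i + w_j \leq \tfrac{2W + \gamma d}{3}$. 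The positivity conditions $0 \leq \gamma$ and $0 \leq w_i$ are built into the subspace itself, whose cone is generated by restrictions of the nef line bundles $\pi_0^* \mc{O}(\gamma) \otimes \bigotimes_i \pi_i^* \mc{O}(w_i)$ on $\Pj^{\binom{d+2}{d}-1} \times (\Pj^2)^n$, forcing non-negative weights.

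Most of the genuine work is already done in Example \ref{ex:generic} and Lemma \ref{lemma:semistable_generic}, so I expect the theorem to reduce to assembling those pieces. The step that deserves the most care---and the one where the argument would fail if Lemma \ref{lemma:semistable_generic} were weaker---is the identification of $\Lambda(\mc{C}_{n,d})$ with the semi-stable locus of a single generic curve: it is exactly the maximality of the generic support proved in that lemma that prevents some other, more special, marked curve from being semi-stable at a line bundle where the generic one is not, and hence guarantees that the ample cone is no larger than the polytope we compute. A secondary point to track carefully is the passage from the strict inequalities cutting out the stable locus in Example \ref{ex:generic} to the non-strict inequalities cutting out the semi-stable locus, which is what produces the closed half-spaces ($\leq$) recorded in the statement.
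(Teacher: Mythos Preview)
Your proposal is correct and follows essentially the same approach as the paper: reduce $\Lambda(\mc{C}_{n,d})$ to the semi-stable locus of a single generic marked curve via Lemma \ref{lemma:semistable_generic}, then read off the inequalities from the computation in Example \ref{ex:generic}. Your write-up is in fact slightly more explicit than the paper's in spelling out the two-sided inclusion and the passage from strict to non-strict inequalities.
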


\begin{proof}[Proof of Theorem \ref{thm:ample_cone}]

Let $(C, p_1, \dots , p_n)$ be sufficiently general in the sense that the $p_i$ are distinct; no point lies on a flex line of $C$; no three points lie on a line; and no two points lie on a tangent line of $C$. By Lemma \ref{lemma:semistable_generic},  $\Lambda(\mc{C}_{n,d})$
 is equal to the locus of line bundles for which $(C, p_1, \dots , p_n)$ is semi-stable.  In Theorem \ref{thm:generic} we computed this to be given by three families inequalities: $3w_i - \sum_1^k w_k + 3 \gamma -2 \gamma d \leq 0;$ $2w_i - \sum_{k=1}^n w_k - \gamma d + 2 \gamma \leq  0;$ and $3w_i + 3w_j - 2 \sum_1^n w_k - \gamma d \leq 0$, for each pair $i,j \in \{1, \dots , n\}$. We solve for $w_i$ in the first two inequalities and for $w_i + w_j$ in the final inequality to get stability conditions for the weights of individual points.
   
\end{proof}

\begin{cor}
    Any general curve $(C, p_1 , \dots , p_n)$ as in Theorem \ref{thm:generic} is stable on the interior of $\Lambda(\mc{C}_{n,d})$.
\end{cor}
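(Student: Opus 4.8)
The plan is to deduce the statement directly from the explicit computation carried out in Example \ref{ex:generic}, together with the description of $\Lambda(\mc{C}_{n,d})$ furnished by Theorem \ref{thm:ample_cone}. Recall that in Example \ref{ex:generic} we produced the complete list $F_1, \dots, F_6$ of maximal values of $\mu^L(g \cdot (C, p_1, \dots, p_n), \lambda_r)$ taken over all $g \in SL(3)$ and all $r \in [\frac{-1}{2}, 1]$. Hence by our Numerical Criterion (Theorem \ref{theorem:NC}), the generic marked curve is $L$-stable precisely when $F_1, \dots, F_6$ are all strictly negative at $\mathbf{w} = [L]$, and $L$-semistable precisely when they are all non-positive. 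The whole corollary will therefore reduce to checking that each $F_k$ is strictly negative on the interior of $\Lambda$.

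First I would record that on the interior of $\Lambda(\mc{C}_{n,d})$ one has $\gamma > 0$ and $w_i > 0$ for every $i$. The inequalities $\gamma \geq 0$ and $w_i \geq 0$ are among the closed half-spaces whose intersection defines $\Lambda$, so no interior point can lie on the bounding hyperplanes $\{\gamma = 0\}$ or $\{w_i = 0\}$. With $\gamma, w_i > 0$ and the standing hypotheses $d \geq 3$, $2 < m \leq d$, the functions $F_1$ and $F_2$ are strictly negative, exactly as computed in Example \ref{ex:generic}.

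The heart of the argument is to show that the three remaining families of functions, namely $F_3 = F_6$, $F_4$, and $F_5$, are strictly negative on the interior of $\Lambda$. By Theorem \ref{thm:ample_cone} (equivalently, by the semistability computation recorded at the end of Example \ref{ex:generic}), the cone $\Lambda(\mc{C}_{n,d})$ is cut out by the closed conditions $F_3 \leq 0$, $F_4 \leq 0$, $F_5 \leq 0$ together with $\gamma \geq 0$ and $w_i \geq 0$; in particular $\Lambda$ is contained in each closed half-space $\{F_k \leq 0\}$ for $k \in \{3,4,5\}$. Since each $F_k$ is linear, any $\mathbf{w}$ with $F_k(\mathbf{w}) = 0$ lies on the bounding hyperplane of $\{F_k \leq 0\}$, so every neighborhood of $\mathbf{w}$ meets $\{F_k > 0\}$, which is disjoint from $\Lambda$; hence such a $\mathbf{w}$ cannot be interior to $\Lambda$. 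Applying this to each $k$ shows that $F_3, F_4, F_5 < 0$ at every interior point.

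Combining the two previous steps, all of $F_1, \dots, F_6$ are strictly negative on the interior of $\Lambda(\mc{C}_{n,d})$, so by Theorem \ref{theorem:NC} the generic marked curve is stable there. I do not anticipate a genuine obstacle: once the stable locus is identified with the strict-inequality region of Example \ref{ex:generic} and the semistable locus (which equals $\Lambda$) with its closure, the result reduces to the elementary fact that the interior of a convex set contained in a half-space avoids the bounding hyperplane. The only point requiring a little care is confirming that $F_1$ and $F_2$ stay strictly negative, which relies on the strict positivity of $\gamma$ and the $w_i$ on the interior; this is precisely why the first step isolates that positivity before invoking the bound from the example.
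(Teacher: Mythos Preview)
Your proposal is correct and follows essentially the same approach as the paper: both argue that the maximal values $F_1,\dots,F_6$ from Example \ref{ex:generic} are strictly negative on the interior of $\Lambda(\mc{C}_{n,d})$. The paper's proof is a single sentence to this effect, while you have carefully spelled out why interior points satisfy $\gamma>0$, $w_i>0$ (forcing $F_1,F_2<0$) and why the defining half-spaces $F_3,F_4,F_5\le 0$ become strict on the interior.
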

\begin{proof}
 This is because the maximal values of $\mu^L(g \cdot (C, p_1, \dots , p_n), \lambda_r)$ as computed in Theorem \ref{thm:generic} are negative on the interior of the $\Lambda(\mc{C}_{n,d})$.
\end{proof}



\section{Finding the inner walls} \label{sec:innerwalls}

We now compute the \textit{inner walls} of the cone $\Lambda(\mc{C}_{n,d})$. The following lemma associates to each inner wall a marked plane curve with positive-dimensional $SL(3)$ stabilizer.

\begin{lemma}\label{lemma:stab}
    If $(D, q_1, \dots , q_n)$ is strictly semi-stable with respect to $L$ then there exists a marked curve $(C, p_1, \dots , p_n)$ with positive dimensional stabilizer in the orbit closure $\overline{SL(3) \cdot (D, q_1, \dots , q_n)}$ which is strictly semi-stable with respect to $L$. 
\end{lemma}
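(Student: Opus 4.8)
The plan is to reduce to the structure theory of semistable orbits and then invoke the classification of plane cubics with positive-dimensional stabilizer established earlier in Lemmas~\ref{lemma:CubicPositStbz}--\ref{lemma:D4Stbz}. The statement is a special case of a general GIT fact: on the strictly semistable locus, every semistable orbit closure contains a unique closed orbit, and that closed orbit must have positive-dimensional stabilizer (otherwise it would be a stable point, and stable points have closed orbits that cannot be properly degenerated while remaining semistable). First I would recall that, by Theorem~\ref{theorem:GIT_quotients}, the categorical quotient $\pi\colon \mc{C}_{n,d}^{ss}(L) \to \mc{C}_{n,d}^{ss}(L)\sslash SL(3)$ identifies points whose orbit closures meet, and each fiber of $\pi$ contains a unique closed orbit in $\mc{C}_{n,d}^{ss}(L)$ (a standard consequence of the affine quotient construction, see \cite[Ch.~8]{Dol03}). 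Applying this to the class $\pi(D,q_1,\dots,q_n)$ produces a closed orbit $SL(3)\cdot(C,p_1,\dots,p_n)$ inside $\overline{SL(3)\cdot(D,q_1,\dots,q_n)} \cap \mc{C}_{n,d}^{ss}(L)$; by construction this marked curve is semistable, and it lies in the orbit closure as required.

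Next I would argue that this closed orbit $(C,p_1,\dots,p_n)$ is strictly semistable, i.e.\ not stable. Here I would use that $(D,q_1,\dots,q_n)$ is strictly semistable, so by Definition~\ref{def:stability} its orbit is \emph{not} closed in $\mc{C}_{n,d}^{ss}(L)$ or it has positive-dimensional stabilizer; either way $(D,q_1,\dots,q_n)$ is not stable, hence $\pi(D,q_1,\dots,q_n)$ lies outside the image of the stable locus $\mc{C}_{n,d}^{s}(L)$. Since the unique closed orbit in a fiber of $\pi$ over a non-stable point cannot itself be stable — a stable orbit is already closed in $\mc{C}_{n,d}^{ss}(L)$ and would be its own unique closed orbit, forcing $\pi(D,\dots)$ into the stable image — we conclude $(C,p_1,\dots,p_n)$ is strictly semistable.

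Finally I would extract the positive-dimensional stabilizer. A closed orbit in $\mc{C}_{n,d}^{ss}(L)$ that fails to be stable must, by Definition~\ref{def:stability}(ii), have infinite (hence positive-dimensional) stabilizer, precisely because closedness of the orbit is already granted and the only remaining obstruction to stability is finiteness of $SL(3)_{(C,p_1,\dots,p_n)}$. This gives the claimed positive-dimensional stabilizer and completes the proof. The main obstacle I anticipate is purely expository rather than mathematical: carefully invoking the uniqueness of the closed orbit in each fiber of the GIT quotient and the equivalence ``closed semistable orbit with finite stabilizer $\iff$ stable,'' both of which are standard but must be cited precisely (e.g.\ from \cite{Mum82} or \cite{Dol03}) so that the logical chain ``not stable $\Rightarrow$ degenerate to a closed non-stable orbit $\Rightarrow$ positive-dimensional stabilizer'' is airtight. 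A secondary subtlety is ensuring the closed orbit genuinely lies in the Zariski orbit closure $\overline{SL(3)\cdot(D,q_1,\dots,q_n)}$ and not merely in an abstract fiber of $\pi$; this follows because $\pi$ is constructed so that orbit closures of points in the same fiber intersect, and the intersection contains the unique closed orbit.
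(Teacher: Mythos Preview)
Your proof is correct and follows essentially the same route as the paper: pass to the unique closed orbit in the semistable orbit closure, observe that this representative is still strictly semistable, and conclude from Definition~\ref{def:stability}(ii) that a closed semistable but non-stable orbit must have positive-dimensional stabilizer. Your write-up is more careful than the paper's about citing the underlying GIT facts; the only extraneous element is the opening reference to Lemmas~\ref{lemma:CubicPositStbz}--\ref{lemma:D4Stbz}, which play no role in this lemma (they are used afterward to enumerate the possible closed orbits, not to prove their existence).
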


\begin{proof}
Suppose $(D, q_1, \dots , q_n)$ is strictly semi-stable with respect to $L$. Since $(D, q_1, \dots , q_n)$ is not stable, it either has an open orbit in $\mc{C}_{n,d}^{ss}(L)$ or it has positive dimensional stabilizer. There exists a unique closed orbit $O$ in $\overline{SL(3) \cdot (D, q_1, \dots , q_n)}$. Let $(C, p_1, \dots , p_n)$ be a representative of $O$. Then since $(C, p_1, \dots , p_n)$ has a closed orbit and is strictly semi-stable at $L$ it must have a positive dimensional stabilizer.
\end{proof}

Due to Lemma \ref{lemma:stab}, to find all the GIT walls it suffices to find the line bundles in $\Lambda(\mc{C}_{n,d})$ at which a marked curve with positive-dimensional stabilizer is strictly semi-stable. Such marked curves were classified in Section \ref{sec:cubicgeom}.

We begin with the case of three non-concurrent lines, that is, a curve projectively equivalent to $\{xyz=0\}$. Lemma \ref{lemma:TriangleStbz} shows that if $(C, p_1, \dots , p_n)$ has positive-dimensional stabilizer and $C$ is three non-concurrent lines then the marked points are either at a node $\eta$ of $C$ or on the opposite linear component $\Lambda \subset C$.

\begin{lemma}[Case of $C(3A_1)$]
\label{lemma:Wall_3distinctlines}
    Let $\mathbf{x}:=(C, p_1, \dots p_n)$ have positive dimensional stabilizer, where the curve $C$ is three lines in general position. Let $\eta$ and $\Lambda$ be as above. Define $I$ to be the subset of $\{1, \dots , n\}$ such that the points $p_i$ coincide at $\eta \in C$ for $i \in I$ and the remaining points lie on the opposite line $\Lambda \subset C$. Then $\mathbf{x}$ is $L$ strictly semi-stable if and only if $\sum_{i \in I}w_i = \frac{1}{2} \sum_{j \notin I} w_j$, the total weight at each of the other nodes is at most $\frac{1}{3} \sum_1^n w_i$, and the weight at any smooth point of $\Lambda$  is at most $\frac{1}{3}\sum_1^n w_i + \gamma $.

\end{lemma}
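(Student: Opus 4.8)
The plan is to apply the Numerical Criterion (Theorem \ref{theorem:NC}) directly, using the explicit stabilizer description from Lemma \ref{lemma:TriangleStbz}. First I would fix coordinates so that $C = \{xyz=0\}$, the node $\eta$ is $(1:0:0)$, and the opposite line $\Lambda$ is $\{x=0\}$; then the points $p_i$ for $i \in I$ sit at $(1:0:0)$ with $\Xi_{\min}(p_i)=\{x\}$, while for $j \notin I$ the points lie on $\{x=0\}$ with $\Xi_{\min}(p_j) \subseteq \{y,z\}$, generically $\{z\}$ or $\{y\}$. Since $\mathbf{x}$ is strictly semi-stable, by Theorem \ref{theorem:NC} we need $\mu^L(g\cdot \mathbf{x},\lambda_r)\le 0$ for all $g\in SL(3)$ and all normalized $\lambda_r$, with equality attained for at least one pair $(g,\lambda_r)$. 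The maximal support of $C=\{xyz=0\}$ is $\{xyz\}$, so $\max\{\langle m,\lambda_r\rangle : m\in\Xi(C)\}=\langle xyz,\lambda_r\rangle = -1$ is constant in $r$; this is what makes the cubic-specific computation tractable.

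The key step is to enumerate, as $g$ ranges over $SL(3)$, the finitely many maximal values of $\Xi(g\cdot\mathbf{x})$ in the sense of Definition \ref{defn:prodsupporder}, exactly as in Example \ref{ex:generic}. The relevant destabilizing configurations correspond to sending a distinguished flag of $C$ into the reference flag $\{(1:0:0)\}\subset\{z=0\}$: (a) the node $\eta$ carrying the weight $\sum_{i\in I}w_i$ and one of the lines through it; (b) each of the other two nodes of $C$; and (c) a smooth point of the opposite line $\Lambda$. For each such configuration I would write down $\mu^L$ as a piecewise-linear function of $r\in[-\tfrac12,1]$ using Equation \ref{eq:mu}, evaluate at the endpoints and the break value of $r$, and read off the resulting linear inequalities in $(\gamma,w_1,\dots,w_n)$. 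Configuration (a) is the one whose destabilizing one-parameter subgroup lies in the stabilizer of $\mathbf{x}$ (the $\diag\{a,a,-2a\}$ direction from Lemma \ref{lemma:TriangleStbz}), so it is precisely here that $\mu^L=0$ must hold identically in $r$ rather than merely $\le 0$; balancing the weight $\sum_{i\in I}w_i$ at the node against the weight $\sum_{j\notin I}w_j$ on $\Lambda$ yields the wall equation $\sum_{i\in I}w_i=\tfrac12\sum_{j\notin I}w_j$. The semi-stability inequalities from configurations (b) and (c) then give the boundary conditions: the total weight at each other node is at most $\tfrac13\sum_1^n w_i$, and the weight at a smooth point of $\Lambda$ is at most $\tfrac13\sum_1^n w_i+\gamma$.

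Finally I would argue the converse: if the wall equation and the two boundary inequalities hold, then every $\mu^L(g\cdot\mathbf{x},\lambda_r)\le 0$ with equality achieved along the stabilizer direction, so $\mathbf{x}$ is strictly semi-stable (semi-stable but not stable, since the vanishing of $\mu$ along a nontrivial $\lambda_r$ precludes stability by Theorem \ref{theorem:NC}). The main obstacle I anticipate is the bookkeeping in the enumeration step: verifying that the configurations (a)--(c) genuinely exhaust the maximal values of $\Xi(g\cdot\mathbf{x})$, i.e.\ that no other placement of the node, the lines, and the points into the reference flag produces a larger support and hence a stronger inequality. This requires the same kind of geometric case analysis as Example \ref{ex:generic}, now complicated by the reducibility of $C$ and the fact that points may be distributed among the node and the opposite line; carefully checking that a point at the node contributes $\langle x,\lambda_r\rangle$ while points on $\Lambda$ contribute $\min\{\langle y,\lambda_r\rangle,\langle z,\lambda_r\rangle\}$, and that these choices are simultaneously optimal, is where the argument must be most careful.
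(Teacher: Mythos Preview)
Your approach is essentially the paper's: enumerate the maximal values of $\Xi_{\max}(g\cdot\mathbf{x})$ as $g$ ranges over $SL(3)$, evaluate $\mu^L$ at the critical $r$-values, and extract the wall and boundary conditions. Your conceptual derivation of the wall equation from the stabilizer direction is valid and is exactly what underlies the paper's observation that two of the extremal values satisfy $F_8=-2F_1$, forcing both to vanish.

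Two small corrections. First, $\langle xyz,\lambda_r\rangle=1+r+(-1-r)=0$, not $-1$ (still constant in $r$, so your point stands). Second, the stabilizer is a \emph{single} one-parameter subgroup---in your coordinates $\lambda_{-1/2}$ together with its inverse---so $\mu^L=0$ holds for that specific $\lambda$, not ``identically in $r$''; the phrase as written is misleading.

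On the enumeration: the paper needs six configurations, not three. For each of your cases (a)--(c) there are two relevant choices of which line is sent to $\{z=0\}$, and these yield genuinely different maximal supports of $g\cdot C$---for instance $\{xyz\}$ versus $\{xy^2\}$ when $\eta\mapsto(1:0:0)$, depending on whether $\{z=0\}$ becomes a branch of the tangent cone or a secant through a smooth point of $\Lambda$. Your anticipated bookkeeping will have to track these sub-cases separately to recover all of the boundary inequalities.
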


\begin{proof}
We solve $\max \{\mu^L(g \cdot \mathbf{x}, \lambda_r) \ | \ g \in SL(3), r \in [\frac{-1}{2},1] \} = 0$ for $L$ to find the line bundles at which the marked curve is strictly semi-stable. As discussed in Section 2.3 our method is to identify the $g \in SL(3)$ which maximize $\Xi_{max}(g \cdot \mathbf{x})$ because such $g$ will also maximize $\mu^L (g \cdot \mathbf{x}, \lambda_r)$.

There are six cases. First we consider the $g \in SL(3)$ which take the node $\eta$ to $(1:0:0)$. Under this assumption there are two ways to maximize $\Xi_{max}(g \cdot \mathbf{x})$.  A nodal singularity of a plane cubic at the origin $(1:0:0)$ of the affine chart $U_{x=1}$ is characterized by the Jacobian Criterion and the fact that the tangent cone has two distinct rays. That is, if $f$ is a polynomial with $V(f) = g \cdot C$ then $f(1,y,z) = ay^2 + byz + cz^2 + h_3(y,z)$, where $ay^2 + byz + cz^2$ is not a perfect square and $h_3(y,z)$ is a homogeneous cubic. If $g_1$ takes the line through $\eta$ and one of the other nodes $\eta'$ to $\{z=0\}$ then  $\Xi_{max}(g_1 \cdot C)$ is $\{xyz\}$. This is because one of the tangent lines at $(1:0:0)$ will be $\{z=0\}$, so $g_1 \cdot C$ is not supported on $x^3, x^2y, $ or $x^2z$ by the Jacobian criterion and it must have $xyz$ but not $xy^2$ in its support to have $\{z=0\}$ and one other line in its tangent cone at the origin. In this case $\Xi_{min}(g_1 \cdot p_i) = \{x\}$ for $p_i = \eta$, $\Xi_{min}(g_1 \cdot p_j) = \{y\}$ for $p_j = \eta'$, and $\Xi_{min}(g_1 \cdot p_k) = \{z\}$ for the remaining points. The other maxima of $\Xi_{max}(g \cdot \mathbf{x})$ when $g \cdot \eta = (1:0:0)$ correspond to $g_2 \in SL(3)$ which take the line through $\eta$ and $p_j$ to $\{z=0\}$ where $p_j$ is a smooth point on $\Lambda$ at which the most weight is concentrated. Then $\Xi_{max}(g_2 \cdot C) = \{xy^2\}$ because the line $\{z=0\}$ is not in the tangent cone at $(1:0:0)$.

Next, assume that $g \in SL(3)$ does not take $\eta$ to $(1:0:0)$. Under this assumption there are four ways to maximize $\Xi_{max}(g \cdot \mathbf{x})$ as follows: In all cases, $\Xi_{max}(g \cdot \mathbf{x})$ is maximized when $g$ takes a node $\eta'$ or one of the points $p_j \in \Lambda$ to $(1:0:0)$. If $g_3$ takes $\eta'$ to $(1:0:0)$ and the line $\overline{\eta \eta'}$ to $\{z=0\}$ then $\Xi_{max}(g_3 \cdot C) = \{xyz\}$, similar to $g_1 \cdot C$ above. If $g_4$ takes $\eta'$ to $(1:0:0)$ and $\Lambda$ to $\{z=0\}$ then $\Xi_{max}(g_4 \cdot C)$ is also $\{xyz\}$ although the minimal supports of the points changes. Let $p_j$ be a smooth point on $\Lambda$ at which the most weight is concentrated. If $g_5$ takes $p_j$ to $(1:0:0)$ and the line $\overline{p_j \eta }$ to $\{z=0\}$ then, by the Jacobian Criterion and the fact that $\{z=0\}$ is not tangent to $C$ at $(1:0:0)$, $\Xi_{max}(g_5 \cdot C) $ is $\{x^2y\}$. Finally, if $g_6$ takes $p_j$ to $(1:0:0)$ and $\Lambda$ to $\{z=0\}$ then $\Xi_{max}(g_6 \cdot C)$ is $\{x^2z\}$.

We abbreviate $\sum_{p_i = \eta} w_i$ as $w_{\eta}$, $\sum_{p_i = \eta'} w_i$ as $w_{\eta'}$, $\sum_{p_i \in \Lambda} w_i$ as $w_{\Lambda}$, and $\sum_{p_i = p_j} w_i$, where $p_j$ is the smooth point on $\Lambda$ at which the most weight is concentrated, as $w_{J}$ . The maximal values of $\mu^L( g \cdot \mathbf{x}, \lambda_r)$, corresponding to the maximal values of $\Xi_{max} (g \cdot \mathbf{x})$ (Lemma \ref{lemma:maxmuxi}) found above, are:

\begin{itemize}
    \item $\mu^L(g_1 \cdot \mathbf{x}, \lambda_r) = w_\eta + r w_{\eta'} + (-1-r)(w_\Lambda - w_{\eta'})$, which is maximized in $r$ by  
    \begin{align*} F_1 := \mu^L (g_1 \cdot \mathbf{x}, \lambda_{\frac{-1}{2}}) = w_\eta - \frac{1}{2} w_\Lambda &&
    \text{and} && F_2:= \mu^L (g_1 \cdot \mathbf{x}, \lambda_1) = w_\eta+ 3 w_\eta' - w_\Lambda.\end{align*}

    \item $\mu^L(g_2 \cdot \mathbf{x}, \lambda_r) = w_\eta + r w_J + (-1-r)(w_\Lambda - w_J) - \gamma(1+2r), $ which is maximized in $r$ by
\begin{align*}
    F_3:= \mu^L(g_2 \cdot \mathbf{x}, \lambda_{\frac{-1}{2}}) = w_\eta - \frac{1}{2}w_\Lambda
    && \text{and} && F_4:=\mu^L(g_2 \cdot \mathbf{x}, \lambda_1) = w_\eta - 2w_\Lambda + 3w_J - 3 \gamma.
\end{align*}
    
    \item $\mu^L(g_3 \cdot \mathbf{x}, \lambda_r) = w_{\eta'}  + rw_\eta + (-1-r)(w_\Lambda - w_{\eta'}),$ which is maximized in $r$ by

    \begin{align*}
      F_5:=  \mu^L(g_3 \cdot \mathbf{x}, \lambda_{\frac{-1}{2}}) = \frac{3}{2}w_{\eta'} - \frac{1}{2}w_\eta - \frac{1}{2} w_\Lambda
      && \text{and} && F_6:= \mu^L(g_3 \cdot \mathbf{x}, \lambda_1) = 3 w_{\eta'} + w_\eta - 2 w_\Lambda .
    \end{align*}
    
    \item $\mu^L(g_4 \cdot \mathbf{x}, \lambda_r) =  w_{\eta'} + r(w_\Lambda - w_{\eta'}) + (-1-r)w_\eta,$
    which is maximized in $r$ by
    \begin{align*}
        F_7 := \mu^L(g_4 \cdot \mathbf{x}, \lambda_{\frac{-1}{2}}) = \frac{3}{2}w_{\eta'} - \frac{1}{2}w_\eta - \frac{1}{2}w_\Lambda
        && \text{and} && 
        F_8:= \mu^L(g_4 \cdot \mathbf{x}, \lambda_1) = w_\Lambda - 2 w_\eta.
    \end{align*}

    \item $\mu^L(g_5 \cdot \mathbf{x}, \lambda_r) = w_J + r w_\eta + (-1-r)(w_\Lambda - w_J) - \gamma (2+r)$
     which is maximized in $r$ by
     \begin{align*}
         F_9:= \mu^L(g_5 \cdot \mathbf{x}, \lambda_{\frac{-1}{2}}) = \frac{3}{2}w_J - \frac{1}{2}w_\Lambda - \frac{3}{2} \gamma - \frac{1}{2}w_\eta
         && \text{and} &&
         F_{10} := \mu^L(g_5 \cdot \mathbf{x}, \lambda_1) = 3 w_J - 2w_\Lambda -3\gamma + w_\eta.
     \end{align*}
    
    \item $\mu^L(g_6 \cdot \mathbf{x}, \lambda_r) = w_J + r(w_\Lambda - w_J) + (-1-r)w_\eta - \gamma(1-r) $
    which is maximized in $r$ by
    \begin{align*}
        F_{11} : = \mu^L(g_6 \cdot \mathbf{x}, \lambda_{\frac{-1}{2}}) = \frac{3}{2}w_J - \frac{1}{2}w_\eta - \frac{3}{2}\gamma - \frac{1}{2}w_{\Lambda}
        && \text{and} &&
           F_{12} : = \mu^L(g_6 \cdot \mathbf{x}, \lambda_1) = - 2w_\eta + w_\Lambda .
    \end{align*}
To solve for $\max \mu^L(g \cdot \mathbf{x}, \lambda_r) = \max F_i(\gamma, w_1 , \dots , w_n) = 0$ notice that $F_8 = -2 F_{1}$. Since these functions are both non-positive they must both be zero. This gives the wall $w_\eta = \frac{1}{2}w_\Lambda$. The other strict semi-stability conditions, $w_{\eta'} \leq \frac{1}{3} (w_{\eta} + w_\Lambda)$ and $w_J \leq \frac{1}{3} (w_\eta + w_\Lambda) + \gamma$ are attained by evaluating the remaining inequalities $F_i \leq 0$ at $w_\eta = \frac{1}{2}w_\Lambda$.
\end{itemize}

\end{proof}

\begin{lemma}[Case of $C(2A_1)$]\label{Lemma:wallC(2A_1)}
    Let $\mathbf{x} := (C, p_1, \dots , p_n)$ have positive dimensional stabilizer, where $C$ is the union of a smooth conic and a transverse line. Then $\mathbf{x}$ is unstable for all line bundles $L$.
\end{lemma}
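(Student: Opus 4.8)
The plan is to invoke the numerical criterion (Theorem~\ref{theorem:NC}) in contrapositive form: to prove $\mathbf{x}$ is unstable for a given $L$ it suffices to exhibit one $g\in SL(3)$ and one normalized one-parameter subgroup $\lambda_r$ with $\mu^L(g\cdot\mathbf{x},\lambda_r)>0$, since this certifies $\mathbf{x}\notin\mc{C}_{n,3}^{ss}(L)$. I would produce such a destabilizer explicitly and uniformly in $L$, so that essentially no case analysis is required.

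First I would reduce to the standard model. Stability is projectively invariant, so I may assume $C=\{xyz+y^3=0\}$, i.e. the conic $\{xz+y^2=0\}$ together with the transverse line $\{y=0\}$, whose two nodes are $\eta_1=(1:0:0)$ and $\eta_2=(0:0:1)$. By Lemma~\ref{lemma:ConicTransStbz}, the hypothesis that $\mathbf{x}$ has positive-dimensional stabilizer forces every marked point to lie at $\eta_1$ or $\eta_2$. Writing $A=\sum_{p_i=\eta_1}w_i$ and $B=\sum_{p_i=\eta_2}w_i$, the total point weight is $W=A+B$.

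Next I would evaluate $\mu$ on the single subgroup $\lambda_{-1/2}=\diag\{1,-\tfrac12,-\tfrac12\}$. The support of $C$ is $\Xi(C)=\{xyz,\,y^3\}$, with $\langle xyz,\lambda_{-1/2}\rangle=0$ and $\langle y^3,\lambda_{-1/2}\rangle=-\tfrac32$, so the curve term in Equation~\ref{eq:mu} is $-\gamma\max\{0,-\tfrac32\}=0$; meanwhile $\eta_1,\eta_2$ have supports $\{x\},\{z\}$ of weights $1,-\tfrac12$, so the point term is $A-\tfrac12 B$. Hence
\[
\mu^L(\mathbf{x},\lambda_{-1/2})=A-\tfrac12 B.
\]
The curve $C$ carries an $SL(3)$-symmetry interchanging its two nodes (for instance $x\leftrightarrow z,\ y\mapsto -y$, which preserves $\{xyz+y^3=0\}$); applying it and repeating the identical computation gives $\mu^L(g_{\mathrm{swap}}\cdot\mathbf{x},\lambda_{-1/2})=B-\tfrac12 A$. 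Since
\[
\max\!\left(A-\tfrac12 B,\ B-\tfrac12 A\right)\ \geq\ \tfrac12\!\left[(A-\tfrac12 B)+(B-\tfrac12 A)\right]=\tfrac{W}{4},
\]
at least one of the two subgroups destabilizes whenever $W>0$, which establishes instability for all such $L$.

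The computation itself is immediate; the point that must be handled correctly is that a single subgroup $\lambda_{-1/2}$, up to the node-swapping symmetry, destabilizes no matter how the weight is split between the two nodes, and this is exactly what the symmetric bound $\max(A-\tfrac12 B,\,B-\tfrac12 A)\geq W/4$ delivers. The one subtlety I would flag is the boundary face $W=0$: there $A=B=0$, the destabilizer vanishes, and indeed a bare conic-plus-transverse-line is only \emph{strictly} semistable as a plane cubic (it is $S$-equivalent to the triangle $\{xyz=0\}$). Thus the statement is to be read for line bundles of positive total point weight, the only regime relevant to the inner walls; no further analysis over $g\in SL(3)$ is needed, since producing a single positive value of $\mu^L$ already defeats semistability.
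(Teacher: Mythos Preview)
Your argument is correct and follows the same strategy as the paper---produce an explicit destabilizing one-parameter subgroup via Equation~\ref{eq:mu}---but your particular choice is slightly less economical. The paper instead chooses $g$ so that the transverse line $\overline{\eta_1\eta_2}$ becomes $\{z=0\}$ (equivalently, works with $\{xyz+z^3=0\}$), and then $\lambda_1$ gives $\mu^L(g\cdot\mathbf{x},\lambda_1)=A+B=W$ in one stroke, with no node-swap or averaging needed; your observation about the boundary case $W=0$ applies equally to both arguments.
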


\begin{proof}
By Lemma \ref{lemma:ConicTransStbz} we know the marked points $p_i$ are at one of the two nodes of $C$ where the conic intersects the line. Denote the two nodes $\eta$ and $\eta'$. There is a matrix $g \in SL(3)$ which takes $\eta$ to $(1:0:0)$ and the line $\overline{\eta \eta'}$ to $\{z=0\}$. Then the maximal support of $g \cdot C$ is $\{xyz\}$ because $g \cdot C$ has a nodal singularity at $(1:0:0)$ with the line $\{z=0\}$ in the tangent cone. This gives

$$\mu^L(g \cdot \mathbf{x}, \lambda_r) = \sum_{p_i = \eta} w_i + r (\sum_{p_j = \eta'} w_j).$$

This is positive for $r=1$, so $\mathbf{x}$ is unstable.

 \end{proof}

Next we consider the cuspidal curve. If $(C,p_1, \dots , p_n)$ is a marked plane cubic with positive dimensional stabilizer and $C$ has a cusp, or $A_2$ singularity, then Lemma \ref{lemma:CuspStbz} implies that the marked points $p_i$ must either be at the cuspidal singularity or at the unique smooth inflection point of $C$.

\begin{lemma}[Case of $C(A_2)$]\label{Lemma:wallC(A_2)}
    Let $\mathbf{x}:=(C, p_1, \dots , p_n)$ have positive dimensional stabilizer, where $C$ is a cuspidal plane cubic. Define $I$ to be the subset of $\{1, \dots , n\}$ such that $p_i$ coincides with the cuspidal singularity of $C$ for $i \in I$. The complement $I^c$ indexes points $p_j$ coinciding with the smooth flex point. Then $\mathbf{x}$ is $L$ strictly semi-stable if and only if $\gamma= \frac{4}{3} \sum_{j \notin I} w_j - \frac{5}{3} \sum_{i \in I} w_i$ and $\sum_{i \in I} w_i \leq \frac{1}{2}\sum_{j \notin I} w_j$.
\end{lemma}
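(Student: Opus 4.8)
The plan is to follow the method of Lemma~\ref{lemma:Wall_3distinctlines}: solve $\max\{\mu^L(g\cdot\mathbf{x},\lambda_r)\mid g\in SL(3),\ r\in[-\tfrac12,1]\}=0$ for $L$, using Lemma~\ref{lemma:maxmuxi} to reduce the problem to locating the $g\in SL(3)$ that maximize the product support $\Xi(g\cdot\mathbf{x})$. By Lemma~\ref{lemma:CuspStbz} I may place $C$ in the standard form $\{xz^2+y^3=0\}$, with cusp $\kappa=(1:0:0)$ and smooth flex $\phi=(0:0:1)$; write $w_I:=\sum_{i\in I}w_i$ for the weight clustered at $\kappa$ and $w_{I^c}:=\sum_{j\notin I}w_j$ for the weight at $\phi$. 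Because the only weighted points form these two clusters, $\Xi(g\cdot\mathbf{x})$ is controlled entirely by where $g$ carries $\kappa$, $\phi$, the cuspidal tangent, the flex tangent, and the line $\overline{\kappa\phi}$ relative to the flag $(1:0:0)\in\{z=0\}$.

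First I would list the placements maximizing $\Xi(g\cdot\mathbf{x})$. Sending a cluster to $(1:0:0)$ gives it the maximal point support $\{x\}$, so the extremal candidates are: (I.a) $\kappa\mapsto(1:0:0)$ with cuspidal tangent $\mapsto\{z=0\}$, which forces $\phi$ off $\{z=0\}$ and gives $\Xi_{max}(g\cdot C)=\{xz^2,y^3\}$; (I.c) $\kappa\mapsto(1:0:0)$ with $\overline{\kappa\phi}\mapsto\{z=0\}$, putting $\phi$ on $\{z=0\}$ and giving $\Xi_{max}(g\cdot C)=\{xy^2\}$; (II.a) $\phi\mapsto(1:0:0)$ with flex tangent $\mapsto\{z=0\}$, forcing $\kappa$ off $\{z=0\}$ and giving $\Xi_{max}(g\cdot C)=\{x^2z,y^3\}$; and (II.d) $\phi\mapsto(1:0:0)$ with $\overline{\kappa\phi}\mapsto\{z=0\}$, putting $\kappa$ on $\{z=0\}$ and giving $\Xi_{max}(g\cdot C)=\{x^2y\}$. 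I would then argue that every other $g$ produces a support dominated (in the order of Definition~\ref{defn:prodsupporder}) by one of these four, using that the cuspidal (resp.\ flex) tangent meets $C$ with multiplicity $3$ only at $\kappa$ (resp.\ $\phi$), that any line through $\kappa$ meets $C$ with multiplicity $\geq 2$ there, and the Jacobian criterion to decide which of $x^3,x^2y,x^2z$ can occur. This completeness/domination check is the step I expect to be the main obstacle, since it is where the geometry of the cuspidal cubic must be pinned down precisely.

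Next I would write each $\mu^L(g\cdot\mathbf{x},\lambda_r)$ as a piecewise-linear function of $r$ and evaluate it at the critical values $r\in\{-\tfrac12,-\tfrac15,\tfrac14,1\}$. This yields the linear functions $G_1:=w_I-\tfrac45 w_{I^c}+\tfrac35\gamma$ (config (I.a) at $\lambda_{-1/5}$, the normalization of the one-parameter subgroup $\diag\{5,-1,-4\}$ fixing $C$), $G_2:=w_I-\tfrac12 w_{I^c}$, $G_3:=W-3\gamma$, and $G_4:=w_{I^c}-\tfrac12 w_I-\tfrac32\gamma$; crucially, config (II.a) evaluated at its kink $r=\tfrac14$ returns exactly $-\tfrac54 G_1$.

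The conclusion is then a brief linear argument. Semistability requires all of these values to be $\leq 0$. Config (I.a) forces $G_1\leq 0$, while config (II.a) forces $-\tfrac54 G_1\leq 0$, i.e.\ $G_1\geq 0$; together these give $G_1=0$, which is precisely the hyperplane $\gamma=\tfrac43 w_{I^c}-\tfrac53 w_I$. Restricting to $\{G_1=0\}$, substitution of $3\gamma=4w_{I^c}-5w_I$ collapses each of $G_2\leq 0$, $G_3\leq 0$, $G_4\leq 0$ to the single inequality $w_I\leq\tfrac12 w_{I^c}$, the stated boundary condition. Finally, since $\mathbf{x}$ has positive-dimensional stabilizer it has infinite stabilizer and so can never be stable (Definition~\ref{def:stability}); hence on this locus semistability coincides with strict semistability, and off it the curve is unstable, giving the claimed equivalence.
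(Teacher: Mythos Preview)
Your proposal is correct and follows essentially the same route as the paper: you isolate the same four extremal placements of $g$ (cusp or flex to $(1{:}0{:}0)$, each with either the relevant tangent or the chord $\overline{\kappa\phi}$ sent to $\{z=0\}$), compute the same piecewise-linear $\mu$-values at the same critical $r\in\{-\tfrac12,-\tfrac15,\tfrac14,1\}$, and conclude via the same key relation (your $(II.a)|_{r=1/4}=-\tfrac54 G_1$, the paper's $F_1=-\tfrac45 F_3$) forcing $G_1=0$ and then collapsing the residual inequalities to $w_I\le\tfrac12 w_{I^c}$. Your explicit remark that a positive-dimensional stabilizer forces semistable $=$ strictly semistable is a welcome clarification the paper leaves implicit, and the ``completeness/domination'' step you flag as the main obstacle is treated at the same informal level in the paper.
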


\begin{proof}
 We follow the same method as in Lemma \ref{lemma:Wall_3distinctlines} by solving $\max \{ \mu^L(g \cdot \mathbf{x}, \lambda_r) \} = 0$ for $L$, though we proceed more concisely. We mention two relevant geometric facts. First, if $f(x,y,z)$ is a homogeneous cubic polynomial and $V(f)$ contains a cusp at $(1:0:0)$ then the tangent cone at $(1:0:0)$ is a line of the form $\{ ay+bz=0 \}$. This, together with the Jacobian Criterion implies that $f(x,y,z) = x(ay+bz)^2 + h_3(y,z)$ for some homogeneous cubic $h$. Conversely, if $f$ has this form then $V(f)$ has a non-nodal singularity at $(1:0:0)$. Secondly we note that a curve $f(x,y,z)$ has a flex point at $(1:0:0)$ with flex line $\{z=0\}$ if and only if $f(x,y,z) = ay^3 + zh_2(x,y,z)$ for some $a \neq 0$ and $h_2$ homogeneous of degree $2$.

 Assume $\mathbf{x}$ is $L$ semi-stable. Let $p_c$ be the cuspidal singular point of $C$ and $p_f$ be the smooth flex point. If $g \in SL(3)$ does not take either $p_c$ or $p_f$ to $(1:0:0)$ then $\mu^L(g \cdot \mathbf{x}, \lambda_r)$ is negative for all $L$ in $\Lambda(\mc{C}_{n,d})$ and $r \in [\frac{-1}{2},1]$. Therefore, if $\mu^L(g \cdot \mathbf{x}, \lambda_r)=0$ then $g$ must take either $p_c$ or $p_f$ to $(1:0:0)$. There are four subsets of $SL(3)$ which maximize $\Xi_{max}(g \cdot \mathbf{x}).$ If $g_1$ takes $p_c$ to $(1:0:0)$ and the tangent cone at $p_c$ to the line $\{z=0\}$ then $\Xi_{max}(g_1 \cdot C) = \{y^3, xz^2\}$. If $g_2$ takes $p_f$ to $(1:0:0)$ and the flex line to $\{z=0\}$ then $\Xi_{max}(g_2 \cdot C) = \{y^3, x^2,z\}$. If $g_3$ takes $p_c$ to $(1:0:0)$ and the line $\overline{p_c p_f}$ to $\{z=0\}$ then $\Xi_{max}(g_3 \cdot C) = \{xy^2\}$. And if $g_4$ takes $p_f$ to $(1:0:0)$ and the line $\overline{p_c p_f}$ to $\{z=0\}$ then $\Xi_{max}(g_4 \cdot C) = \{x^2y\}$. 

 We abbreviate $\sum_{p_i = p_c} w_i$ as $w_c$ and $\sum_{p_j = p_f} w_j$ as $w_f$. Then the maximal values of $\mu^L(g \cdot \mathbf{x}, \lambda_r)$ are

 \begin{itemize}
     \item $\mu^L(g_1 \cdot \mathbf{x}, \lambda_r) = w_c + (-1-r)w_f - \gamma \max \{ 3r,-1-2r\},$ which is maximized in $r$ by
     \begin{align*}
        F_1 := \mu^L(g_1 \cdot \mathbf{x}, \lambda_{\frac{-1}{5}}) = w_c - \frac{4}{5}w_f + \frac{3}{5} \gamma
        && \text{and} &&
        F_2 := \mu^L(g_1 \cdot \mathbf{x}, \lambda_{\frac{-1}{2}}) = w_c - \frac{1}{2} w_f
     \end{align*}

     \item $\mu^L(g_2 \cdot \mathbf{x}, \lambda_r) = w_f + (-1-r)w_c - \gamma \max \{3r, 1-r \}$
     which is maximized in $r$ by
     \begin{align*}
        F_3 := \mu^L(g_1 \cdot \mathbf{x}, \lambda_{\frac{1}{4}}) = w_f - \frac{5}{4}w_c - \frac{3}{4} \gamma 
        && \text{and} &&
        F_4 := \mu^L(g_1 \cdot \mathbf{x}, \lambda_{\frac{-1}{2}}) = w_f - \frac{1}{2} w_c - \frac{3}{2} \gamma.
     \end{align*}

     \item $\mu^L(g_3 \cdot \mathbf{x}, \lambda_r) = w_c + r w_f - \gamma (1+2r)$
     which is maximized in $r$ by
     \begin{align*}
        F_5 := \mu^L(g_1 \cdot \mathbf{x}, \lambda_{\frac{-1}{2}})= w_c - \frac{1}{2} w_f 
        && \text{and} &&
        F_6 := \mu^L(g_1 \cdot \mathbf{x}, \lambda_1) = w_c + w_f - 3 \gamma.
     \end{align*}

     \item $\mu^L(g_4 \cdot \mathbf{x}, \lambda_r) = w_f + r w_c - \gamma(2+r) $
     which is maximized in $r$ by
     \begin{align*}
        F_7 := \mu^L(g_1 \cdot \mathbf{x}, \lambda_{\frac{-1}{2}}) = w_f - \frac{1}{2}w_c - \frac{3}{2} \gamma 
        && \text{and} &&
        F_8 := \mu^L(g_1 \cdot \mathbf{x}, \lambda_1) = w_f + w_c - 3\gamma .
     \end{align*}
 \end{itemize}
 We solve $\max{F_i}= 0$ for $(\gamma, w_1, \dots, w_n).$ Since $F_1 = \frac{-4}{5}F_3$ they must both be zero. This gives the wall $\gamma = \frac{4}{3}w_f - \frac{5}{3}w_c$, and the remaining inequalities evaluated at this wall become $w_c \leq \frac{1}{2}w_f.$

\end{proof}

In the case of the union of a conic and a tangent line with positive dimensional stabilizer, Lemma \ref{lemma:ConicTangStbz} tells us that the marked points are either at the intersection of the conic and the tangent line, or on one other line tangent to the conic.

\begin{lemma}[Case of $C(A_3)$] \label{Lemma:wallC(A_3)}
    Let $\mathbf{x} := (C, p_1, \dots , p_n)$ have positive dimensional stabilizer, where $C$ is the union of a conic $A$ and a tangent line $B$. Let $I$ be the subset of $\{1, \dots ,n\}$ such that $p_i$ coincides with the tacnode for $i \in I$ and let $J$ be the subset such that the $p_j$ coincide at a non-singular point of $B$ for $j \in J$. The remaining marked points coincide with the point on the conic $A$ whose tangent line intersects the $p_j$s. Then $\mathbf{x}$ is $L$ strictly semi-stable if and only if $\sum_{k \notin I \cup J} w_k = \sum_{i \in I} w_i + \gamma$ and $\sum_{i \in I} w_i - \gamma \leq \sum_{j \in J} w_j \leq \sum_{i \in I} w_i + 2 \gamma$.
\end{lemma}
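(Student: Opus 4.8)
The plan is to follow the template of Lemma~\ref{lemma:Wall_3distinctlines} and Lemma~\ref{Lemma:wallC(A_2)}: solve $\max\{\mu^{L}(g\cdot\mathbf{x},\lambda_r)\mid g\in SL(3),\ r\in[-\tfrac{1}{2},1]\}=0$ for $L$. By Lemma~\ref{lemma:ConicTangStbz} the marked points occupy three loci, and I abbreviate their total weights as $w_I:=\sum_{i\in I}w_i$ at the tacnode $p_c=A\cap B$, $w_J:=\sum_{j\in J}w_j$ at the point $p_b$ where the auxiliary tangent line $l$ meets $B$, and $w_K:=\sum_{k\notin I\cup J}w_k$ at the point $p_a$ where $l$ is tangent to the conic $A$. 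As in the earlier lemmas, the $g$ maximizing $\Xi_{max}(g\cdot\mathbf{x})$ are those carrying one of $p_c,p_b,p_a$ to $(1:0:0)$ and one of the three distinguished lines through that point (namely $B$, $l$, or the chord $\overline{p_cp_a}$) to $\{z=0\}$; any $g$ sending no special point to $(1:0:0)$ gives $\mu^{L}<0$ and is discarded.

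The first and most delicate step is the geometric determination of $\Xi_{max}(g\cdot C)$ in each of the six resulting configurations. This is the main obstacle, because $C=A\cup B$ is reducible: the support of its defining cubic is that of the product $f_A f_B$, so one must account for the contributions of \emph{both} components rather than reading off the local singularity alone. The two decisive configurations are: (A1) $p_c\mapsto(1:0:0)$ with the common tangent $B\mapsto\{z=0\}$, where the tacnode has tangent cone the double line $\{z^2=0\}$ and one finds $\Xi_{max}(g_{A1}\cdot C)=\{xz^2,\,y^2z\}$; and (C1) $p_a\mapsto(1:0:0)$ with $l\mapsto\{z=0\}$, where the conic is tangent to $\{z=0\}$ at the origin while the line $B$ misses the origin, so that the product picks up $\Xi_{max}(g_{C1}\cdot C)=\{x^2z,\,xy^2\}$. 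The remaining four configurations — $p_c\mapsto(1:0:0)$ with $\overline{p_cp_a}\mapsto\{z=0\}$ (A2); $p_b\mapsto(1:0:0)$ with $B$, resp.\ $l$, sent to $\{z=0\}$ (D1, D2); and $p_a\mapsto(1:0:0)$ with $\overline{p_cp_a}\mapsto\{z=0\}$ (C2) — give, via the Jacobian criterion together with the tangent data, the single-monomial maximal supports $\{xy^2\}$, $\{x^2z\}$, $\{x^2y\}$, and $\{x^2y\}$ respectively.

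Next I would evaluate $\mu^{L}$ in each configuration using $\langle x^ay^bz^c,\lambda_r\rangle=a+rb+(-1-r)c$. The two two-monomial cases (A1) and (C1) are piecewise linear in $r$ with a single break at $r=0$, while the other four are linear, so every maximum is attained at $r\in\{-\tfrac{1}{2},0,1\}$. The key observation is that $\mu^{L}(g_{A1}\cdot\mathbf{x},\lambda_0)=w_I-w_K+\gamma$ and $\mu^{L}(g_{C1}\cdot\mathbf{x},\lambda_0)=w_K-w_I-\gamma$ are negatives of one another; since both must be non-positive at a semi-stable point they must both vanish, forcing the wall $\sum_{k\notin I\cup J}w_k=\sum_{i\in I}w_i+\gamma$. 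This is exactly the mechanism $F_8=-2F_1$ used in Lemma~\ref{lemma:Wall_3distinctlines}.

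Finally I would restrict the remaining inequalities to this wall. The evaluations of (A1) and (C1) at $r=1$, together with those of (D1) and (D2), all collapse to $\sum_{j\in J}w_j\le\sum_{i\in I}w_i+2\gamma$, while those of (A1) and (C1) at $r=-\tfrac{1}{2}$, together with (A2) and (C2), all collapse to $\sum_{j\in J}w_j\ge\sum_{i\in I}w_i-\gamma$; these are precisely the stated boundary conditions. For the converse one checks that, on the wall and within this range, all six functions are non-positive, and then by Lemma~\ref{lemma:maxmuxi} every non-maximal configuration is automatically non-positive as well; hence $\max_{g,r}\mu^{L}=0$, so $\mathbf{x}$ is semi-stable but not stable, i.e.\ strictly semi-stable. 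Conversely, violating the wall makes either $\mu^{L}(g_{A1}\cdot\mathbf{x},\lambda_0)$ or $\mu^{L}(g_{C1}\cdot\mathbf{x},\lambda_0)$ positive, and violating a boundary inequality makes one of the endpoint values positive, in either case destroying semi-stability.
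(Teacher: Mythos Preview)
Your proposal is correct and follows essentially the same route as the paper: enumerate the six maximal configurations $g\cdot\mathbf{x}$, read off $\Xi_{max}$, evaluate $\mu^L$ at the critical $r$-values, observe that the two $r=0$ values from the tacnode/$B$ and conic-point/$l$ configurations are negatives of one another (the paper's $F_2=-F_9$), and then reduce the remaining inequalities on the resulting wall. One small point worth noting: in your configuration (D1), sending $p_b\mapsto(1:0:0)$ and $B\mapsto\{z=0\}$, you compute $\Xi_{max}(g\cdot C)=\{x^2z\}$, whereas the paper records $\{x^2z,xy^2\}$; your version is the correct one, since $g\cdot C$ is cut out by $z\cdot f_A$ and hence no $z$-free monomial can occur. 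This does not affect the outcome, because on the wall both computations yield the same boundary inequality $w_J\le w_I+2\gamma$.
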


\begin{proof}
First we remark that if all of the marked points lie on the line $B$ then $\mathbf{x}$ is unstable for all line bundles $L$. This can be seen by letting $g \in SL(3)$ take the tacnode to $(1:0:0)$ and the line $B$ to $\{z=0\}$. Then $\mu^L(g \cdot \mathbf{x}, \lambda_1) = \sum_1^n w_i$, which is positive.

Now assume that the marked points lie either at the tacnode or on the intersection $C \cap \Lambda$ where $\Lambda$ is a line \textit{other than $B$} which is tangent to the conic. We proceed as in Lemma \ref{lemma:Wall_3distinctlines} by solving $\max \{ \mu^L(g \cdot \mathbf{x}, \lambda_r) \ | \ g \in SL(3), \ r \in [\frac{-1}{2}, 1] \} = 0$ for $L$. There are  six subsets of $SL(3)$ which maximize $\Xi_{max}$. 

If $g\in SL(3)$ maximizes $\Xi_{max}(g \cdot \mathbf{x})$ then it must take the tacnode or one of other the marked points to $(1:0:0)$. Let $p_t$ be the tacnode, $p_a$ the marked point on the conic, $p_b$ the marked point on the linear component of $C$, and $\Lambda$ the line through $p_a$ and $p_b$. If $g_1$ takes $p_t$ to $(1:0:0)$ and $B$ to $\{z=0\}$ then $\Xi_{max}( g \cdot C) = \{xz^2, y^2z\}$. If $g_2$ takes $p_b$ to $(1:0:0)$ and $B$ to $\{z=0\}$ then $\Xi_{max} ( g_2 \cdot \mathbf{x}) = \{x^2z,  xy^2 \}$. If $g_3$ takes $p_b$ to $(1:0:0)$ and $\Lambda$ to $\{z=0\}$ then $\Xi_{max} ( g_3 \cdot \mathbf{x}) = \{x^2y\}$. If $g_4$ takes  $p_a$ to $(1:0:0)$ and $\Lambda$ to $\{z=0\}$ then $\Xi_{max} ( g_4 \cdot \mathbf{x}) = \{ x^2z, xy^2\}$. If $g_5$ takes $p_t$ to $(1:0:0)$ and the line $\overline{p_a p_t}$ to $\{z=0\}$ then $\Xi_{max} ( g_5 \cdot \mathbf{x}) = \{ xy^2\}$. Finally, if $g_6$ takes $p_a$ to $(1:0:0)$ and the line $\overline{p_a p_t}$ to $\{z=0\}$ then $\Xi_{max} ( g_6 \cdot \mathbf{x}) = \{x^2y\}$.

We abbreviate $\sum_{p_i = p_t} w_i$ as $w_t$, $\sum_{p_j = p_b} w_j$ as $w_b$, and $\sum_{p_k = p_a} w_k$ as $w_a$. Then the maximal values of $\mu^L( g \cdot \mathbf{x}, \lambda_r)$ are

\begin{itemize}
    \item $\mu^L(g_1 \cdot \mathbf{x}, \lambda_r) = w_t + rw_b +(-1-r)w_a - \gamma \max\{ -2-r, -1+r \}$, which is maximized in $r$ by
     \begin{align*}
        F_1 := \mu^L(g_1 \cdot \mathbf{x}, \lambda_1) = p_t -2p_a +p_b
        && \text{and} &&
        F_2 := \mu^L(g_1 \cdot \mathbf{x}, \lambda_0) = p_t - p_a + \gamma \end{align*}
        \begin{align*} \text{and} \ \ F_3 := \mu^L(g_1 \cdot \mathbf{x}, \lambda_{\frac{-1}{2}}) = p_t - \frac{p_a}{2} - \frac{p_b}{2}.   \end{align*}

    \item $\mu^L(g_2 \cdot \mathbf{x}, \lambda_r) = w_b + rw_t +(-1-r) w_a - \gamma \max\{1-r, 1+2r \}$, which is maximized in $r$ by
      \begin{align*}
        F_4 := \mu^L(g_2 \cdot \mathbf{x},  \lambda_1) = p_b -2 p_a + p_t + 3 \gamma 
        && \text{and} &&
        F_5 := \mu^L(g_2 \cdot \mathbf{x}, \lambda_{\frac{-1}{2}}) = p_b - \frac{1}{2}p_a - \frac{1}{2}p_t
    \end{align*}
    
    \item $\mu^L(g_3 \cdot \mathbf{x}, \lambda_r) = w_b + rw_a + (-1-r)w_t - \gamma (2+r)$, which is maximized in $r$ by
     \begin{align*}
        F_6 := \mu^L(g_3 \cdot \mathbf{x}, \lambda_1) = p_b -2 p_a + p_t + 3 \gamma
        && \text{and} &&
        F_7 := \mu^L(g_3 \cdot \mathbf{x}, \lambda_{\frac{-1}{2}}) = p_b - \frac{p_a}{2} - \frac{p_t}{2}.
    \end{align*}
    
    \item $\mu^L(g_4 \cdot \mathbf{x}, \lambda_r) = w_a + r w_b + (-1-r) w_t - \gamma \max \{1-r, 1+2r\}$, which is maximized in $r$ by
    \begin{align*}
        F_8 := \mu^L(g_1 \cdot \mathbf{x}, \lambda_1) = p_a + p_b - 2 p_t - 3 \gamma
        && \text{and} &&
        F_9 := \mu^L(g_1 \cdot \mathbf{x},  \lambda_0) = p_a - p_t - \gamma
        \end{align*}
        \begin{align*} \text{and} \ \
        F_{10} := \mu^L(g_1 \cdot \mathbf{x}, \lambda_{\frac{-1}{2}}) = p_a - \frac{p_b}{2} - \frac{p_t}{2} - \frac{3\gamma}{2}.
    \end{align*}

    \item $\mu^L(g_5 \cdot \mathbf{x}, \lambda_r) = w_t + rw_a + (-1-r)w_b - \gamma(1+2r)$, which is maximized in $r$ by
    \begin{align*}
        F_{11} := \mu^L(g_5 \cdot \mathbf{x}, \lambda_1) = w_t + w_a - 2w_b -3\gamma
        && \text{and} &&
        F_{12} := \mu^L(g_5 \cdot \mathbf{x}, \lambda_{\frac{-1}{2}}) = w_t - \frac{-1}{2}w_a - \frac{1}{2}w_b
    \end{align*}
    
    \item $\mu^L(g_6 \cdot \mathbf{x}, \lambda_r) = w_a + rw_t + (-1-r)w_b - \gamma(2+r)$, which is maximized in $r$ by
    \begin{align*}
        F_{13} := \mu^L(g_6 \cdot \mathbf{x}, \lambda_1) = w_a -2w_b + w_t - 3\gamma
        && \text{and} &&
        F_{14} := \mu^L(g_6 \cdot \mathbf{x}, \lambda_{\frac{-1}{2}}) = w_a - \frac{1}{2}w_b - \frac{1}{2}w_t - \frac{3}{2}\gamma.
    \end{align*}
\end{itemize}
We solve $\max \{F_i\} =0$ for $(\gamma, w_1, \dots , w_n)$. Since $F_2 = -F_9,$ they must both be zero. This gives the wall $p_a = p_t + \gamma.$ Evaluating the remaining inequalities at this wall gives the condition $w_t - \gamma \leq w_b \leq w_t +2 \gamma.$

\end{proof}

\begin{lemma}[Case of $C(D_4)$]\label{Lemma:wallD_4}
    Let $\mathbf{x} := (C, p_1, \dots , p_n)$ have positive dimensional stabilizer, where $C$ is the cone over three points. Let $I$ be the subset of $\{1, \dots , n\}$ such that $p_i$ coincides with the singular point of $C$ for $i \in I$. Then $\mathbf{x}$ is $L$ strictly semi-stable if and only if $\sum_{j \notin I} w_j = 3 \gamma + 2\sum_{i \in I} w_i$ and $\sum_{w_j = p} w_j \leq 2 \gamma + \sum_{i \in I}w_i$ for each non-singular points $p \in C$.
\end{lemma}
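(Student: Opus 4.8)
The plan is to follow the template of Lemma \ref{lemma:Wall_3distinctlines}: I would solve $\max\{\mu^L(g\cdot\mathbf{x},\lambda_r)\mid g\in SL(3),\ r\in[-\tfrac12,1]\}=0$ for $L$ by first locating the finitely many $g$ that maximize $\Xi_{max}(g\cdot\mathbf{x})$. By Lemma \ref{lemma:D4Stbz} the marked points sit either at the triple point $s$ (the indices $I$, of total weight $w_I:=\sum_{i\in I}w_i$) or on a single transversal line $l$, where they are distributed among the three intersection points $b_1,b_2,b_3$ of $l$ with the three lines of $C$; write $w_{B_k}$ for the weight accumulated at $b_k$. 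As in Example \ref{ex:generic}, each candidate $g$ is recorded by which marked point it sends to $(1:0:0)$ and which line it sends to $\{z=0\}$, and the support of $g\cdot C$ is read off from the fact that $C$ is three concurrent lines: a triple point forced onto $(1:0:0)$ kills every monomial of positive $x$-degree, while sending one component to $\{z=0\}$ factors $z$ out of the cubic.

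The wall itself I expect to come from two opposite configurations. First, sending $s\to(1:0:0)$ and $l\to\{z=0\}$ gives $\Xi_{max}(g\cdot C)=\{y^3\}$ with the $b_k$ on $\{z=0\}$, so that $\mu^L(g\cdot\mathbf{x},\lambda_r)=w_I+r\big(\sum_k w_{B_k}-3\gamma\big)$, whose value at $r=-\tfrac12$ is $w_I-\tfrac12\sum_k w_{B_k}+\tfrac32\gamma$. Second, keeping $l\to\{z=0\}$ but pushing $s$ off $\{z=0\}$ (say $s\to(0:0:1)$) forces a monomial of $x$-degree $3$ into the support, so $\Xi_{max}(g\cdot C)=\{x^3\}$ and $\mu^L(g\cdot\mathbf{x},\lambda_r)=-(1+r)w_I+r\sum_k w_{B_k}-3\gamma$, whose value at $r=1$ is $-2w_I+\sum_k w_{B_k}-3\gamma$. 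These two functions are negative scalar multiples of one another (the second is $-2$ times the first), so both being non-positive forces both to vanish, yielding the wall $\sum_{j\notin I}w_j=2\sum_{i\in I}w_i+3\gamma$. For the boundary conditions I would use the configurations isolating a single $b_k$: sending $s\to(1:0:0)$ and the component $\ell_k=\overline{sb_k}$ to $\{z=0\}$ gives $\Xi_{max}(g\cdot C)=\{y^2z\}$ and, at $r=1$, the inequality $w_I+w_{B_k}-2\sum_{m\neq k}w_{B_m}\le 0$; restricting this to the wall collapses it to $3\big(w_{B_k}-w_I-2\gamma\big)\le 0$, i.e. $w_{B_k}\le \sum_{i\in I}w_i+2\gamma$ for each $k$, which are exactly the stated conditions.

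The hard part is the bookkeeping needed to be sure the configuration list is complete. Because the $D_4$ point is a triple point it admits strictly more degenerations than the node or cusp treated earlier, so there are more ways to place a marked point at $(1:0:0)$ and a line at $\{z=0\}$; I would enumerate all of them (including those sending some $b_k$ to $(1:0:0)$, which by the $s\leftrightarrow b_k$ symmetry on the line $\ell_k$ reproduce the same critical functions), compute the piecewise-linear $\mu^L(g\cdot\mathbf{x},\lambda_r)$ via Equation \ref{eq:mu}, and evaluate at the breakpoints in $r$ to obtain a finite family $\{F_i\}$. The remaining, and most delicate, step is to verify that every $F_i$ other than the four identified above is automatically non-positive once the wall equation and the three boundary inequalities hold, so that no spurious constraint cuts the segment further; Lemma \ref{lemma:maxmuxi} guarantees these $F_i$ are genuinely the maximal values of the numerical function, which is what legitimizes the reduction to finitely many cases.
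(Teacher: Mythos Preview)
There is a genuine gap. Your first configuration is geometrically impossible: since the triple point $s$ does not lie on the transversal line $l$, while $(1:0:0)\in\{z=0\}$, no $g\in SL(3)$ can send $s\to(1:0:0)$ and $l\to\{z=0\}$ simultaneously. Hence the quantity $w_I-\tfrac12\sum_k w_{B_k}+\tfrac32\gamma$ is not realized as $\mu^L(g\cdot\mathbf{x},\lambda_{-1/2})$ along that route, and your deduction that semi-stability forces it to be non-positive is unsupported. The fix is already latent in what you wrote: your third configuration ($s\to(1:0:0)$, one component $\ell_k\to\{z=0\}$) is legitimate, and evaluating \emph{that} at $r=-\tfrac12$ yields exactly the same expression, because $\langle y,\lambda_{-1/2}\rangle=\langle z,\lambda_{-1/2}\rangle$ and $\langle y^2z,\lambda_{-1/2}\rangle=\langle y^3,\lambda_{-1/2}\rangle$. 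This is the paper's $F_1$.

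Your second configuration is also not among the maxima of $\Xi_{max}(g\cdot\mathbf{x})$: keeping $l\to\{z=0\}$ but sending the heaviest smooth point $\alpha$ to $(1:0:0)$ forces the component $\overline{s\alpha}$ onto $\{y=0\}$ and lowers the curve's maximal support from $\{x^3\}$ to $\{x^2y\}$, which strictly dominates your choice in the order of Definition~\ref{defn:prodsupporder}. You recover the correct number at $r=1$ only through the accident $\langle x^3,\lambda_1\rangle=\langle x^2y,\lambda_1\rangle$ and $\langle x,\lambda_1\rangle=\langle y,\lambda_1\rangle$; at other $r$ your configuration undercounts, so the completeness check you flag as delicate would not go through with your list. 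The paper's actual maximal configurations, each sending either $\sigma$ or $\alpha$ to $(1:0:0)$, are: $\sigma\to(1:0:0)$ with $\overline{\sigma\alpha}\to\{z=0\}$; $\alpha\to(1:0:0)$ with $\overline{\sigma\alpha}\to\{z=0\}$; and $\alpha\to(1:0:0)$ with $l\to\{z=0\}$. From these one obtains $F_1,\ldots,F_6$ with $F_6=-2F_1$, and the remaining $F_i$ evaluated on the wall give $w_\alpha\le w_\sigma+2\gamma$.
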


 \begin{proof}
As in the above Lemmas, we solve $\max \{\mu^L(g \cdot \mathbf{x}, \lambda_r) \ | \ g \in SL(3), r \in [\frac{-1}{2},1] \} = 0$ for $L$. From Lemma \ref{lemma:D4Stbz} we know that the marked points coincide either at the $D_4$ singular point or on some line. However, they do not all lie on the same linear component of $C$ because then $\mathbf{x}$ would be unstable. Let $\alpha$ be the non-singular point of $C$ at which the most weight is concentrated and let $\sigma$ be the singular point. To find the maximal values of $\Xi_{max}(g \cdot \mathbf{x})$ we may assume $g$ takes $\alpha$ or $\sigma$ to $(1:0:0)$. Then there are three subsets of $SL(3)$ which maximize $\Xi_{max}(g \cdot \mathbf{x})$. If $g_1$ takes $\sigma$ to $(1:0:0)$ and the line $\overline{\sigma \alpha}$ to $\{z=0\}$ then the equation for $g_1 \cdot C$ factors as three linear terms, one of which is $z$, and none of which are supported on the monomial $x$, so $\Xi_{max}(g_1 \cdot C) = \{ y^2z \}$. If $g_2$ takes $\alpha$ to $(1:0:0)$ and the line $\overline{\sigma \alpha}$ to $\{z=0\}$ then smoothness and the tangent direction at $(1:0:0)$ imply $\Xi_{max}(g_2 \cdot C) = \{x^2z\}$. If $g_3$ takes $\alpha$ to $(1:0:0)$ and the line through the other non-singular marked points to $\{z=0\}$ then $\Xi_{max}(g_3 \cdot C) = \{ x^2y\}$.

We abbreviate $\sum_{p_i = \sigma} w_i$ as $w_\sigma$, $\sum_{p_j = \alpha} w_j$ as $w_\alpha$, and $\sum_{p_k \neq \sigma, \alpha} w_k$ as $w_\beta$. Then the maximal values of $\mu^L(g \cdot \mathbf{x}, \lambda_r)$ are

\begin{itemize}
    \item $\mu^L(g_1 \cdot \mathbf{x}, \lambda_r) = w_\sigma + rw_\alpha + (-1-r)(w_\beta) - \gamma(-1+r)$,
    which is maximized in $r$ by
    \begin{align*}
        F_1 = \mu^L(g_1 \cdot \mathbf{x}, \lambda_{\frac{-1}{2}}) = w_\sigma - \frac{1}{2} w_\beta - \frac{1}{2}w_\alpha + \frac{3}{2} \gamma 
        && \text{and} &&
        F_2 = \mu^L(g_1 \cdot \mathbf{x}, \lambda_1) = w_\sigma + w_\alpha - 2w_\beta.
    \end{align*}
    
    \item $\mu^L(g_2 \cdot \mathbf{x}, \lambda_r) = w_\alpha + rw_\sigma + (-1-r)w_\beta - \gamma(1-r), $
    which is maximized in $r$ by
    \begin{align*}
        F_3 = \mu^L(g_2 \cdot \mathbf{x}, \lambda_{\frac{-1}{2}}) = w_\alpha - \frac{1}{2}w_\beta - \frac{1}{2}w_\sigma - \frac{3}{2} \gamma
        && \text{and} &&
        F_4 = \mu^L(g_2 \cdot \mathbf{x}, \lambda_1) = w_\sigma + w_\alpha - 2 w_\beta.
    \end{align*}
    
    \item$\mu^L(g_3 \cdot \mathbf{x}, \lambda_r) = w_\alpha + r w_\beta + (-1-r)w_\sigma - \gamma(2+r), $
    which is maximized in $r$ by
    \begin{align*}
        F_5 = \mu^L(g_3 \cdot \mathbf{x}, \lambda_{\frac{-1}{2}}) = w_\alpha - \frac{1}{2}w_\beta - \frac{1}{2}w_\sigma - \frac{3}{2}\gamma
        && \text{and} &&
        F_6 = \mu^L(g_3 \cdot \mathbf{x}, \lambda_1) = w_\alpha + w_\beta -2w_\sigma -3\gamma.
    \end{align*}
\end{itemize}
We solve $\max \{F_i\} =0$ for $(\gamma, w_1, \dots , w_n)$. Notice that $F_6 = -2F_1,$ so they both must be zero. This gives the wall $w_\gamma = \frac{1}{2}(w_\alpha + w_\beta) - \frac{3}{2} \gamma.$ Evaluating the remaining inequalities at this wall gives the condition $w_\alpha \leq 2\gamma + w_\sigma.$

\end{proof}

\section{Proof of the main theorems}

We now prove Theorem \ref{thm:GIT_walls} by combining the Lemmas from Section \ref{sec:innerwalls}.

\begin{thm} \label{thm:GIT_walls}
    For degree $d=3$, there are four types of inner walls, described below. The walls are segments of hyperplanes defined in Table \ref{table:GITwalls}. If the hyperplane segment intersects the interior of $\Lambda(\mc{C}_{n,d})$ then it is a GIT wall and all inner GIT walls are of this form.
    \begin{itemize}
        \item[(i)] For each nonempty, proper subset $I \subset [n]$ there is a hyperplane segment $W(3A_1, I)$ associated to the union of three non-concurrent lines $C(3A_1)$ with points $p_i, i \in I, $ supported at a node.  
        \item[(ii)] For each proper subset $I \subset [n]$ there is a hyperplane segment $W(A_2,I)$ associated to the cuspidal curve $C(A_2)$ with points $p_i, i \in I, $ supported at the cusp.
        \item[(iii)]  For each ordered pair of disjoint subsets $I,J \subset [n]$ there is a hyperplane segment $W(A_3, I, J)$ associated to  the union of a conic with a tangent line $C(A_3)$ with points $p_i, i \in I, $ supported at the tacnode and $p_j, j \in J, $ supported at the unique linear component of the curve.
        \item[(iv)] For each subset $I \subset [n]$ with $|I| \leq n-3$ there is a hyperplane segment $W(D_4, I)$ associated to the cone over three points $C(D_4)$ with points $p_i, i \in I, $ supported at the singularity. 
    \end{itemize}
\end{thm}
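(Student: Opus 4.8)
The plan is to assemble the theorem from the case-by-case computations of Section \ref{sec:innerwalls}, tied together by the reduction in Lemma \ref{lemma:stab} and the classification in Lemma \ref{lemma:CubicPositStbz}. The organizing principle is that, by Lemma \ref{lemma:stab}, every inner GIT wall is the locus of line bundles at which \emph{some} marked curve $\mathbf{x} = (C,p_1,\dots,p_n)$ with positive-dimensional $SL(3)$ stabilizer is strictly semi-stable; conversely, every such strict semi-stability locus lies in the wall-and-chamber decomposition. Thus it suffices to range over all marked curves with positive-dimensional stabilizer, solve $\max\{\mu^L(g\cdot\mathbf{x},\lambda_r)\}=0$ for $L$ in each case, and collect the resulting codimension-one loci.

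First I would invoke Lemma \ref{lemma:CubicPositStbz} to reduce the underlying curve $C$ to one of the five projective types $\{xyz=0\}$, $\{xyz+y^3=0\}$, $\{xz^2+y^3=0\}$, $\{x^2z-xy^2=0\}$, and $\{x^2y+xy^2=0\}$, which are respectively $3A_1$, $2A_1$, $A_2$, $A_3$, and $D_4$. For each type, Lemmas \ref{lemma:TriangleStbz}, \ref{lemma:ConicTransStbz}, \ref{lemma:CuspStbz}, \ref{lemma:ConicTangStbz}, and \ref{lemma:D4Stbz} pin down the admissible positions of the marked points (at a node, a cusp, a tacnode, on a tangent line, on a linear component, at the singularity, etc.); this positional data is exactly the combinatorial input encoded by the subsets $I$, and in the $A_3$ case also $J$. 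The one type producing no wall is $2A_1$: Lemma \ref{Lemma:wallC(2A_1)} shows the conic-plus-transverse-line configuration is unstable for every $L$, so it contributes nothing to the decomposition. This is why the theorem lists only four wall types rather than five.

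The four remaining types are handled by Lemmas \ref{lemma:Wall_3distinctlines}, \ref{Lemma:wallC(A_2)}, \ref{Lemma:wallC(A_3)}, and \ref{Lemma:wallD_4}, each of which isolates the single binding linear relation (with the non-binding inequalities supplying the boundary constraints). I would then match these outputs to Table \ref{table:GITwalls}, writing $w_\eta = \sum_{i\in I}w_i$ and so on: the relation $w_\eta = \tfrac{1}{2}w_\Lambda$ rearranges to $\sum_{i\in I}w_i - \tfrac{1}{2}\sum_{j\notin I}w_j = 0$ for (i); $\gamma = \tfrac{4}{3}w_f - \tfrac{5}{3}w_c$ to $\sum_{i\in I}w_i - \tfrac{4}{5}\sum_{j\notin I}w_j + \tfrac{3}{5}\gamma = 0$ for (ii); $w_a = w_t + \gamma$ to $\sum_{i\in I}w_i - \sum_{k\notin I\cup J}w_k + \gamma = 0$ for (iii); and $w_\sigma = \tfrac{1}{2}(w_\alpha + w_\beta) - \tfrac{3}{2}\gamma$ to $\sum_{i\in I}w_i - \tfrac{1}{2}\sum_{j\notin I}w_j + \tfrac{3}{2}\gamma = 0$ for (iv). The boundary conditions arise by evaluating the remaining inequalities $F_i \le 0$ on each hyperplane, and the index constraints record exactly when a point configuration realizing the degeneration actually exists: $I$ must be nonempty and proper for $3A_1$ (otherwise the relation forces $W=0$), proper for $A_2$ (all points at the cusp forces $\gamma \le 0$), and $|I|\le n-3$ for $D_4$, which guarantees at least three non-singular points to occupy the three concurrent lines.

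The main obstacle I anticipate is the bookkeeping in the $D_4$ case. Lemma \ref{lemma:D4Stbz} shows that the non-singular stabilized points cluster at three locations, one on each concurrent line and all lying on a common line $l$, so the single-cluster bound $w_\alpha \le w_\sigma + 2\gamma$ from Lemma \ref{Lemma:wallD_4} must be promoted to the existence of a partition $B_1 \sqcup B_2 \sqcup B_3$ of $[n]\setminus I$ with $\sum_{j\in B_k}w_j \le \sum_{i\in I}w_i + 2\gamma$ for every $k$; checking that this partition formulation genuinely captures strict semi-stability, and that $|I|\le n-3$ is the correct nondegeneracy hypothesis, is the delicate point. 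Finally, I would confirm the codimension-one and completeness claims: each segment is cut out by a single linear equation, hence codimension one, and at a generic point of the segment lying in the interior of $\Lambda(\mc{C}_{n,3})$ the associated curve is strictly semi-stable while being (semi)stable on one side and unstable on the other, so the semi-stable locus genuinely changes there. Combined with Lemma \ref{lemma:stab}, which forces every wall to be witnessed by one of these positive-dimensional-stabilizer configurations, this shows the listed segments are precisely the inner GIT walls.
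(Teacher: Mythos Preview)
Your proposal is correct and follows essentially the same approach as the paper's proof: both reduce via Lemma \ref{lemma:stab} to marked curves with positive-dimensional stabilizer, invoke Lemma \ref{lemma:CubicPositStbz} for the five curve types, dispose of $2A_1$ via Lemma \ref{Lemma:wallC(2A_1)}, and then read off the four wall types from Lemmas \ref{lemma:Wall_3distinctlines}, \ref{Lemma:wallC(A_2)}, \ref{Lemma:wallC(A_3)}, and \ref{Lemma:wallD_4}. Your treatment is in fact slightly more explicit than the paper's in translating the lemma outputs into the Table \ref{table:GITwalls} notation and in flagging the $D_4$ partition bookkeeping.
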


\begin{proof}

Since the Hilbert-Mumford function is continuous, stability only changes at line bundles for which there exist strictly semi-stable marked curves. Due to Lemma \ref{lemma:stab}, the GIT walls are the locus of equivalence classes of line bundles $L$ in $NS^{SL(3)}(\mc{C}_{n,d})_\Q$ for which there exists a marked curve with positive-dimensional $SL(3)$ stabilizer which is $L$ strictly semi-stable. If $(C,p_1, \dots , p_n)$ has positive-dimensional stabilizer, then the curve $C$ must also have positive dimensional stabilizer. If $C$ is a reduced cubic plane curve, then by Lemma \ref{lemma:CubicPositStbz} there are only five such curves, up to projective equivalence. Suppose $\mathbf{x}:=(C, p_1, \dots , p_n)$ is a marked cubic with positive dimensional stabilizer and $L$ is an ample line bundle in $NS(\mc{C}_{n,3})$ at which $\mathbf{x}$ is strictly semi-stable.

\begin{enumerate}[label=(\roman*)]
    \item If $C$ is three non-concurrent lines then by Lemma \ref{lemma:TriangleStbz} the marked points lie either on some nodal singular point $\eta$ of $C$ or on the opposite line $\Lambda$. Let $I$ be a nonempty, proper subset of $\{1, \dots , n\}$. If the points $p_i$ coincide with $\eta$ for $i \in I$ and $p_j$ are distinct nonsingular points of $\Lambda$ for $j \notin I$ then Lemma \ref{lemma:Wall_3distinctlines} implies that $(C, p_1, \dots, p_n)$ is $L$ strictly semi-stable if and only if $L$ lies on the wall $\sum_{i \in I} w_i = \frac{1}{2} \sum_{j \notin I} w_j$ bounded by $ w_m \leq  \frac{1}{2} \sum_{j \notin I} w_j + \gamma$ for each $ m \notin I.$ We remark that if the cardinality of $I$ is empty or all of $\{1, \dots n\}$ then the stability conditions of Lemma \ref{lemma:Wall_3distinctlines} cannot be satisfied.

    \item If $C$ is a cuspidal cubic then by Lemma \ref{lemma:CuspStbz} the marked points coincide either with the cusp of $C$ or the unique smooth flex point. Let $I$ be a subset of $\{1, \dots , n\}$. If the points $p_i$ coincide with the cusp for $i \in I$ and the points $p_j$ coincide with the smooth flex point then Lemma \ref{Lemma:wallC(A_2)} implies $(C, p_1, \dots , p_n)$ is $L$ strictly semi-stable if and only if $L$ lies on the wall $\sum_{i \in I} w_i  = \frac{4}{5} \sum_{j \notin I} w_j  - \frac{3}{5}\gamma $ bounded by $ \sum_{i \in I} w_i \leq \frac{1}{2} \sum_{j \notin I} w_j.$ 

    \item If $C$ is the union of a conic and a tangent line then by Lemma \ref{lemma:ConicTangStbz} there exists a line $\Lambda$ tangent to the conic such that the marked points either coincide with the singular point of $C$, or lie on $\Lambda \cap C$. Let $I$ and $J$ be disjoint subsets of $\{1, \dots , n\}$. If the points $p_i$ coincide at the tacnode for $i \in I$ and $p_j$ coincide at a smooth point of the linear component of $C$ for $j \in J$ then Lemma \ref{Lemma:wallC(A_3)} implies that $(C, p_1, \dots , p_n)$ is $L$ strictly semi-stable if and only if $L$ lies on the wall $ \sum_{i \in I} w_i  = \frac{4}{5} \sum_{j \notin I} w_j  - \frac{3}{5}\gamma ,$ bounded by $ \sum_{i \in I} w_i \leq \frac{1}{2} \sum_{j \notin I} w_j$.

    \item If $C$ is the cone over three points then by Lemma \ref{lemma:D4Stbz} there exists a line $\Lambda$ such that the marked points are either at the singularity of $C$ or on the line $\Lambda$. Let $I$ be a subset of $\{1, \dots , n\}$ and let $A_1 \sqcup A_2 \sqcup A_3$ be a partition of the complement of $I$. If the points $p_i$ coincide at the $D_4$ singularity for $i \in I$, and the points $p_j$ lie on one of the first, second, or third linear components of $C$ for $j \in A_1, A_2,$ and $A_3,$ respectively, then Lemma \ref{Lemma:wallD_4} implies that  $(C, p_1, \dots, p_n)$ is $L$ strictly semi-stable if and only if $L$ lies on the wall $\sum_{i \in I} w_i = \frac{1}{2} \sum_{j \notin I} w_j - \frac{3}{2} \gamma$ bounded by $\sum_{j \in A_k} w_j  \leq \sum_{i \in I} w_i + 2 \gamma$. Note that this wall may stretch beyond this bound given by one particular partition $A_1, A_2, A_3$. As long as there exists some partition $A_1 \sqcup A_2 \sqcup A_3 = \{1, \dots , n\} \ \backslash \ I$,  such that $L$  satisfies $\sum_{i \in I} w_i = \frac{1}{2} \sum_{j \notin I} w_j - \frac{3}{2} \gamma$ and $\sum_{j \in A_k} w_j  \leq \sum_{i \in I} w_i + 2 \gamma$, then we can find a marked curve that is $L$ strictly semi-stable. 

    \item If $C$ is the union of a conic and a transverse line then Lemma \ref{lemma:ConicTransStbz} shows that the marked point must both be nodal points of $C$, but Lemma \ref{Lemma:wallC(2A_1)} shows that such a marked curve will be unstable for all $L$.
\end{enumerate}
    
\end{proof}

We now describe how the stability of marked cubic curves changes as these GIT walls are traversed.

\begin{thm}\label{thm:wallcrossing}
Let $S(T, I, -)$ and $S(T, I, +)$ be the generic marked curves that change stability when the wall $W(T,I)$ is crossed, with common degeneration $S(T,I,0)$ as described in Section \ref{sec:intro}. Then the wall crossing behavior is described as follow:

\begin{itemize}
\item[(Case i)] $S(3A_1,I,0)$ is a union of three non-concurrent lines. The marked points indexed by $I$ coincide at the nodal intersection of two lines and the remaining marked points lie on the third line.
\begin{itemize}
    \item[$\bullet$] $S(3A_1,I,-)$ is an irreducible nodal curve with the marked points indexed by $I$ coinciding at the  $A_1$ singularity and the remaining marked points in general position.
    \item[$\bullet$] $S(3A_1,I,+)$ is the union of a conic and a transverse line, with marked points $p_j$ lying in general position on the linear component for $j \notin I.$
\end{itemize}
\item[(Case ii)] $S(T,I,0)$ is a plane cubic with a cuspidal singularity. The marked points indexed by $I$ coincide at the $A_2$ singularity and the others coincide at the curves unique inflection point.
\begin{itemize}
    \item[$\bullet$] $S(A_2,I, -)$ is an irreducible cuspidal cubic curve with marked points indexed by $I$ coinciding at the $A_2$ singularity.
    \item[$\bullet$] $S(A_2,I,+)$ is a smooth cubic curve with marked points $p_j$ coinciding at an inflection point for $j \notin I$. 
\end{itemize}
\item[(Case iii)] $S(A_3,I,J,0)$ is the union of a conic and a line which intersect at a tacnode. The marked points indexed by $I$ coincide at the $A_3$ singularity, those indexed by $J$ lie on the linear component of the curve, and the rest lie on a single line in the plane which is also tangent to the conic.
    \begin{itemize}
      \item[$\bullet$] $S(A_3, I, J,-)$ is the union of a conic with a tangent line with marked points indexed by $I$ coinciding at the $A_3$ singularity and those indexed by $J$ lying on the tangent line.
     \item[$\bullet$] $S(A_3,I,J,+)$ is a smooth cubic curve $C$ with marked points $p_k$ coinciding for $k \notin I \cup J$ and the marked points indexed by $J$ coinciding at the transversal intersection of $C$ with the line tangent to $C$ at $p_k$.
    \end{itemize}
    \item[(Case iv)] $S(D_4,I,0)$ is a plane cubic with a $D_4$ singularity. The marked points indexed by $I$ coincide at the $D_4$ singularity. The marked points indexed by $B_1$, $B_2$, and $B_3$ coincide at $3$ points on the $3$ linear component of the curve, respectively, and these $3$ points are collinear.
   \begin{itemize}
    \item[$\bullet$] $S(D_4,I, -)$ is three concurrent lines with marked points indexed by $I$ coinciding at the $D_4$ singularity and the points indexed by $B_1$, $B_2,$ and $B_3$ lying on the three lines, respectively.
    \item[$\bullet$] $S(D_4,I, +)$ is a smooth cubic curve with the points indexed by $B_1$ coinciding at a point $q_1$, the points indexed by $B_2$ coinciding at a different point $q_2$, and the points indexed by $B_3$ coinciding at a third point on the line $\overline{q_1q_2}$.
    \end{itemize}
\end{itemize}
See Figure \ref{fig:WallCrossing} for an illustration of the walls.
\end{thm}

\begin{proof}
    Each wall in Theorem \ref{thm:GIT_walls} was found by computing the maximal values of $\mu^L(g \cdot \mathbf{x}, \lambda_r)$ for some marked curve $\mathbf{x}$ with positive-dimensional stabilizer.  In each case (Lemmas \ref{lemma:Wall_3distinctlines} - \ref{Lemma:wallD_4}) there are some linear automorphisms, $g^+$ and $g^-$, and normalized one-parameter subgroups, $\lambda^+$ and $\lambda^-$, such that $\mu^L(g^- \cdot \mathbf{x}, \lambda^-)$ is a negative multiple of $ \mu^L(g^+ \cdot \mathbf{x}, \lambda^+)$. Furthermore, there may be several $g_a$ and $\lambda_A$ for which $\mu^L(g_a \cdot \mathbf{x}, \lambda_A)$ is a positive multiple of $\mu^L(g^+ \cdot \mathbf{x}, \lambda^+)$ and several $g_b$ and $\lambda_B$ for which $\mu^L(g_b \cdot \mathbf{x}, \lambda_B)$ is a positive multiple of $\mu^L(g^- \cdot \mathbf{x}, \lambda^-)$. These are the linear automorphisms and one-parameter subgroups which make $\mu^L(g \cdot \mathbf{x}, \lambda_r)$ zero at the wall associated to $\mathbf{x}$. All of the other maximal values of $\mu^L( g \cdot \mathbf{x}, \lambda_r)$ are negative on the interior of the wall and therefore negative at the adjacent line bundles $L_{(\pm)}$. Any marked curve $\mathbf{y}$ for which the set $\Xi_{max}(\mathbf{y})$ is equal to $ \Xi_{max}( g_a \cdot \mathbf{x})$ or $\Xi_{max}( g_b \cdot \mathbf{x})$ will have $\mu^L(\mathbf{y}, \lambda_A)=0$, or $\mu^L(\mathbf{y}, \lambda_B)=0$ respectively, at the associated wall.

    For each $\mathbf{x}$ as above we find the generic marked curve having maximal support equal to $ \Xi_{max}( g_a \cdot \mathbf{x})$ for at least one of the $g_a$ and the generic marked curve having the maximal support $\Xi_{max}( g_b \cdot \mathbf{x})$ for at least one of the $g_b$. That is, we find a marked curve $\mathbf{y}$ such that
    \begin{enumerate} 
        \item there exists an $h \in SL(3)$ such that $\Xi_{max}(h \cdot \mathbf{y}) = \Xi_{max}(g_a \cdot \mathbf{x})$ for some $g_a$,
        \item there does not exist any $h \in SL(3)$ such that $\Xi_{max}(h \cdot \mathbf{y}) = \Xi_{max}(g_b \cdot \mathbf{x})$ for any $g_b$,
        \item and for all $h' \in SL(3)$ there exists a $g' \in SL(3)$ such that $\Xi_{max} (h' \cdot \mathbf{y}) \leq \Xi_{max}(g' \cdot \mathbf{x})$.
    \end{enumerate}
     This implies $\mathbf{y}$ is unstable on the side of the wall where $\mu^L(g_a \cdot \mathbf{x}, \lambda_A)$ is positive and stable on the side where it is negative. Likewise, we find a marked curve $\mathbf{y'}$ that has the same properties with respect to $\Xi_{max}(g_b \cdot \mathbf{x}).$

     In Lemma \ref{lemma:Wall_3distinctlines} we found the wall corresponding to the marked cubic $\mathbf{x} = (C, p_1, \dots , p_n)$ with three $A_1$ singularities and positive-dimensional stabilizer. In that Lemma we found four subsets of $SL(3)$, corresponding to matrices $g_1,g_2,g_4,$ and $g_6$ for which $\mu^L(g_1 \cdot \mathbf{x}, \lambda_{\frac{-1}{2}})$ is a positive multiple $\mu^L(g_2 \cdot \mathbf{x}, \lambda_{\frac{-1}{2}})$ and a negative multiple of $\mu^L(g_4 \cdot \mathbf{x}, \lambda_1)$ and $\mu^L(g_6 \cdot \mathbf{x}, \lambda_1)$. These are the numerical functions whose vanishing defines the wall $W(A_3, I)$. The maximal support $\Xi_{max}(g_1 \cdot \mathbf{x})$ is a tuple of sets with components $\Xi_{max}(g_1 \cdot C) = \{xyz\}$, $\Xi_{min}(g_1 \cdot p_i) = \{x\}$ for $i \in I$, $\Xi_{min}(g_1 \cdot p_j) = \{y\}$ for $p_j$ at the node $\eta'$, and $\Xi_{min}(g_1 \cdot p_k) = \{z\}$ for the remaining points. The maximal support $\Xi_{max}(g_2 \cdot \mathbf{x})$ contains components $\Xi_{max}(g_2 \cdot C) = \{xy^2\}$, $\Xi_{min}(g_2 \cdot p_i) = \{x\}$ for $i \in I$, $\Xi_{min}(g_2 \cdot p_j) = \{y\}$ for $p_j$ the smooth point of $\Lambda$ at which the most weight is concentrated, and $\Xi_{min}(g_2 \cdot p_k) = \{z\}$ for the remaining points. The maximal support $\Xi_{max}(g_4 \cdot \mathbf{x})$ contains components $\Xi_{max}(g_4 \cdot C) = \{xyz\}$, $\Xi_{min}(g_4 \cdot p_j) = \{x\}$ for $p_j$ at the node $\eta'$, $\Xi_{min}(g_4 \cdot p_k) = \{y\}$ for the remaining points on $\Lambda$, and $\Xi_{min}(g_4 \cdot p_i) = \{z\}$ for $i \in I$. Finally the maximal support $\Xi_{max}(g_6 \cdot \mathbf{x})$ contains components $\Xi_{max}(g_6 \cdot C) = \{x^2z\}$, $\Xi_{min}(g_6 \cdot p_j) = \{x\}$ for $p_j$ the smooth point of $\Lambda$ at which the most weight is concentrated, $\Xi_{min}(g_6 \cdot p_k) = \{y\}$ for the remaining points on $\Lambda$, and $\Xi_{min}(g_6 \cdot p_i) = \{z\}$ for $i \in I$. Any marked curve having maximal support equal to $\Xi_{max}(g_1 \cdot \mathbf{x})$ or $\Xi_{max}(g_2 \cdot \mathbf{x})$ will therefore have a nodal singularity coinciding with the points $p_i$ for $i \in I$. On the other hand, any marked curve having maximal support equal to $\Xi_{max}(g_4 \cdot \mathbf{x})$ or $\Xi_{max}(g_6 \cdot \mathbf{x})$ will contain a linear component which is incident with the points $p_j$ for $j \notin I$. Therefore the generic marked curves satisfying the conditions in the above paragraph are an irreducible nodal cubic with marked points $p_i$ at the node for $i \in I$ and the union of a smooth conic and a transverse line with smooth marked points and $p_j$ on the linear component for $j \notin I$.

     The remaining wall crossing behaviors of Theorem \ref{thm:wallcrossing} are found similarly. This shows that the marked curves $S(T,I, \pm)$ in the theorem change stability at the wall associated to $T$. To see that they are the general curves that do so we count dimensions. That is, we verify that at each wall the dimension of the set of curves $S(T,I,-)$ in $\overline{M}^{git}_{1,L^-}$ plus the dimension of $S(T,I,+)$ in $\overline{M}^{git}_{1,L^+}$ is equal to the codimension of $S(T,I,0) $ in  $\overline{M}^{git}_{1,L^0}$, minus two. Since the exceptional loci of the wall crossing morphism $\overline{M}^{git}_{1,L^-} \dashrightarrow \overline{M}^{git}_{1,L^+}$ are obtained as weighted projective space bundles over the positive and negative weight subspaces of the normal bundle of $S(T,I,0) $, this dimension count shows that the curves $S(T,I, \pm)$ are dense in the exceptional loci.

\end{proof}

\section{Applications to other moduli spaces} \label{sec:app}

\subsection{Cubics with  two points and cubic surfaces} \label{sec:2pts}

We now specialize to the case of curves of degree $d=3$ with $n=2$ marked points. In Corollary \ref{cor:FromM2toRadu} we find a GIT chamber containing a line bundle $L$ such that $\mc{C}_{2,3}^s \slash_{L} SL(3)$ maps into the moduli space of cubic surfaces.

In the case of $n=2$ points and arbitrary degree $d$, $\Lambda(\mc{C}_{n,d})$ is the cone over a pentagon (Corollary \ref{cor:pentagon}). Since GIT stability remains invariant under taking tensor powers of a line bundle, we assume $\gamma = 1$ to obtain a hyperplane section of the $\Lambda(\mc{C}_{n,d})$, called the \textit{linearization polytope} $\Delta(\mc{C}_{2,d})$. In this section we use $p_i$ and $p_j$ to refer to the points $p_1$ and $p_2$ with the understanding that $i \neq j$.

\begin{cor}\label{cor:pentagon}
    The linearization polytope $\Delta(\mc{C}_{2,d})$ is the pentagon in the hyperplane $\{\gamma =1 \} \subset NS^{SL(3)}(\mc{C}_{2,d})_\Q$ given by the inequalities
$$
\{
(1, w_1, w_2) \in 
NS^{SL(3)}(\mc{C}_{2,d})_\Q
\; \big| \;
w_i \leq w_j + d -2, \ w_1 + w_2 \leq d, \ w_i \geq 0   \}.
$$
\end{cor}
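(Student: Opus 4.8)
The plan is to deduce Corollary~\ref{cor:pentagon} directly from Theorem~\ref{thm:ample_cone} by specializing to $n=2$ and intersecting with the affine hyperplane $\{\gamma = 1\}$. Since GIT (semi-)stability is unchanged when $L$ is replaced by a positive tensor power, $\Lambda(\mc{C}_{2,d})$ is a rational polyhedral cone with apex at the origin, hence is determined by its slice at $\gamma = 1$; this slice is by definition the linearization polytope $\Delta(\mc{C}_{2,d})$. With $n=2$ one has $W = w_1 + w_2$, so I would substitute $\gamma = 1$ and $W = w_1 + w_2$ into each of the three families of inequalities from Theorem~\ref{thm:ample_cone} and simplify.

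First I would record what two of the three families become. The inequality $w_i + w_j \leq \tfrac{2W + \gamma d}{3}$ reads $w_1 + w_2 \leq \tfrac{2(w_1 + w_2) + d}{3}$, which collapses to $w_1 + w_2 \leq d$. The inequality $w_i \leq \tfrac{W + \gamma(d-2)}{2}$ becomes $2w_i \leq w_1 + w_2 + d - 2$, i.e. $w_i \leq w_j + d - 2$. Together with the positivity constraints $w_i \geq 0$, these are precisely the five inequalities cutting out the claimed pentagon.

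The remaining family, $w_i \leq \tfrac{W + \gamma(2d-3)}{3}$, becomes $2w_i - w_j \leq 2d - 3$ after clearing denominators, and the one genuine point of the argument is to show this constraint is redundant. I would exhibit it as a nonnegative combination of the two retained inequalities: writing
\[
2w_i - w_j = \tfrac{3}{2}\,(w_i - w_j) + \tfrac{1}{2}\,(w_i + w_j)
\]
and applying $w_i - w_j \leq d-2$ together with $w_i + w_j \leq d$ yields
\[
2w_i - w_j \;\leq\; \tfrac{3}{2}(d-2) + \tfrac{1}{2}d \;=\; 2d - 3 ,
\]
so the $2d-3$ constraint is automatically satisfied wherever the other two hold and may be discarded.

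After eliminating the redundant family, the system reduces to $w_i \leq w_j + d - 2$, $w_1 + w_2 \leq d$, and $w_i \geq 0$, which is exactly the polytope in the statement. To certify that it is a genuine pentagon (for $d \geq 3$) rather than a polytope with fewer facets, I would list its five vertices $(0,0)$, $(d-2,0)$, $(d-1,1)$, $(1,d-1)$, $(0,d-2)$ --- obtained by intersecting consecutive pairs of boundary lines --- and verify that each of the five inequalities is active along a distinct edge. I do not anticipate any real obstacle: once the redundancy identity above is in hand, everything else is routine linear bookkeeping, and that single positive-combination observation is the only step that requires a moment's thought.
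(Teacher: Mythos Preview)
Your proposal is correct and follows essentially the same route as the paper: specialize Theorem~\ref{thm:ample_cone} to $n=2$, $\gamma=1$, and show the $2d-3$ inequality is redundant as a nonnegative combination of the other two (the paper reaches the same $\tfrac{3}{2},\tfrac{1}{2}$ combination, presented as two successive additions). Your extra step of listing the five vertices to confirm the shape is a pentagon is a nice addition the paper omits.
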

\begin{proof}
We apply Theorem \ref{thm:ample_cone} to the case of $n=2$ points and normalize with the hyperplane $\gamma=1$. This gives the inequalities 
\begin{enumerate}[label=(\Alph*)]
    \item $w_i \leq \frac{1}{2}w_j + d - \frac{3}{2}$
    \item $ w_i \leq w_j +d-2$
    \item $w_i + w_j \leq d$
    \item and $w_i \geq 0.$
\end{enumerate}
However, the first inequality is redundant: Adding (B) and (C) gives $2w_i + w_j \leq w_j + 2d -2$. This simplifies to $w_i \leq d-1$. Adding (B) to this inequality gives $2w_i \leq w_j +2d -3$, which implies (A).

\end{proof}

 We apply Theorem \ref{thm:GIT_walls} to the case of curves of degree $3$ with $2$ points and $\gamma = 1$. We list all the walls that lie in the interior of $\Delta(\mc{C}_{2,3}).$

\begin{cor} \label{cor:2pts}
The wall crossing behavior for a marked cubic curve $(C,p_1, p_2)$ with respect to a line bundle with parameters $(\gamma, w_1, w_2)$ are listed in the following table. Recall that Table \ref{table:GITwalls} describes the GIT walls as hyperplane segments which intersect $\Lambda(\mc{C}_{n,d})$.
The first two columns in the table below lists these walls for the case of $n=2, \ d=3$ with notation as in Table \ref{table:GITwalls}. The third column describes the marked cubics that become unstable when the left-hand side of the equation in column 2 exceeds the right-hand side. The fourth column described the marked cubics that become unstable when the right-hand side is larger. \\

 \begin{tabular}{
 |p{2cm}| p{3cm}|p{5cm}|p{5cm}|
 }
\hline 
Inner Walls & Equation &  Unstable when LHS $>$ RHS & Unstable when LHS $<$ RHS \\
\hline 

$W(3A_1, \{i\})$ & $w_i = \frac{1}{2} w_j$ & $p_i$ lies at an $A_1$ singularity. & $p_j$ lies on a linear component of $C$. \\

\hline
$W(A_2, \emptyset)$ &  $0  = 4( w_1 + w_2)  - 3$  & $C$ contains an $A_2$ singularity.  & $p_1=p_2$ coincide at an inflection point of $C$.  \\

\hline
$W(A_2, \{i\})$ &  $ w_i  = \frac{4}{5}w_j  - \frac{3}{5}$  & $p_i$ lies at an $A_2$ singularity.  & $p_j$ lies at an inflection point.  \\

\hline
$W(A_3, \emptyset, \emptyset)$ &  $0 =  w_1+w_2 - 1$ & $C$ contains an $A_3$ singularity.  &  $p_1=p_2$. \\

\hline
$W(A_3, \emptyset, \{i\})$ &  $0 =  w_j - 1$ & $p_j$ lies on a linear component of $C$ which is tangent to a conic component.  &  $p_j$ lies on the line which is tangent to $C$ at $p_i$. \\

\hline

\end{tabular}
    
\end{cor}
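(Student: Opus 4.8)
The plan is to read off the table as a specialization of Theorems~\ref{thm:GIT_walls} and~\ref{thm:wallcrossing} to $n=2$, $d=3$, using the explicit linearization polytope from Corollary~\ref{cor:pentagon}. For $d=3$ that corollary gives the pentagon $\Delta(\mc{C}_{2,3}) \subset \{\gamma = 1\}$ cut out by $|w_1 - w_2| \leq 1$, $w_1 + w_2 \leq 3$, and $w_i \geq 0$, with vertices $(0,0),(1,0),(2,1),(1,2),(0,1)$. The first step is purely combinatorial: since $[n]=\{1,2\}$ admits only a handful of subsets, I would list every subset $I$ (and every ordered pair of disjoint subsets $I,J$) permitted in Theorem~\ref{thm:GIT_walls}, write down its hyperplane and boundary conditions from Table~\ref{table:GITwalls} normalized by $\gamma = 1$, and record the resulting segment.

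The second and main step is to decide which of these segments meet the interior of $\Delta(\mc{C}_{2,3})$; only those are inner GIT walls. I would do this case by case. For $3A_1$ the only admissible set is $I = \{i\}$, giving $w_i = \tfrac12 w_j$, a segment from the origin to $(2,1)$ or $(1,2)$ through the interior. For $A_2$ the proper subsets $\emptyset$ and $\{i\}$ give $4(w_1+w_2)=3$ and $w_i = \tfrac45 w_j - \tfrac35$, each crossing the interior. For $A_3$ the disjoint pairs must be checked individually: $(\emptyset,\emptyset)$ gives $w_1+w_2 = 1$ and $(\emptyset,\{i\})$ gives $w_j = 1$ (the complementary weight), both interior; while $(\{i\},\emptyset)$ gives $w_j - w_i = 1$, which is exactly the outer edge $|w_1 - w_2| = 1$ and hence not an inner wall, and the remaining pairs force $w_i = -1$, $w_1 + w_2 = -1$, or $\gamma = 0$ and so lie outside $\Delta(\mc{C}_{2,3})$. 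Finally $D_4$ walls need $|I| \leq n-3 = -1$ and do not occur. This yields exactly the five rows of the table, with $\{i\}$ abbreviating the symmetric pair obtained by exchanging $p_1$ and $p_2$.

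For the last two columns I would invoke Theorem~\ref{thm:wallcrossing}, substituting the relevant subsets to obtain $S(T,I,\pm)$ for each surviving wall, and then translate the $\pm$ convention into the ``LHS versus RHS'' language. The mechanism is the sign of the linear form $f$ defining the wall: by the definitions in Section~\ref{sec:intro}, near the wall $S(T,I,+)$ is stable exactly where $f > 0$ and $S(T,I,-)$ exactly where $f < 0$, so the configuration becoming unstable on a given side is the one whose $\pm$ label is opposite to the sign of $f$ there. In each of the five rows the tabulated equation has been arranged so that ``LHS $>$ RHS'' is precisely $f > 0$, which a one-line check confirms; for example on $W(A_2,\emptyset)$ one has $f = \tfrac15\bigl(3 - 4(w_1+w_2)\bigr)$, so LHS $>$ RHS means $f > 0$, where $S(A_2,\emptyset,-)$ --- the irreducible cuspidal cubic --- is unstable, while on the other side $S(A_2,\emptyset,+)$ --- the smooth cubic with $p_1 = p_2$ at a flex --- is unstable. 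The remaining rows are identical.

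The step I expect to be the main obstacle is the consistent matching of $p_1, p_2$ to the roles in Theorem~\ref{thm:wallcrossing}, rather than the enumeration or the sign tracking. For $W(A_3,\emptyset,\{i\})$ the wall is indexed by $J = \{i\}$ yet its defining equation $w_j = 1$ involves the complementary index, so one must take care to record which of $p_i, p_j$ lies on the tangent line and which is the conic/flex point, and to check that the two symmetric walls $W(A_3,\emptyset,\{1\})$ and $W(A_3,\emptyset,\{2\})$ together realize both orderings appearing in the table. Once this labeling is fixed, every entry follows by the direct substitution above.
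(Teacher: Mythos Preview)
Your proposal is correct and follows the same approach as the paper: specialize Theorems~\ref{thm:GIT_walls} and~\ref{thm:wallcrossing} to $n=2$, $d=3$, enumerate the admissible subsets, and discard those whose hyperplane segments miss the interior of $\Delta(\mc{C}_{2,3})$. You actually supply more detail than the paper's own proof, which is a single short paragraph. The only visible difference is in how $D_4$ is dismissed: you invoke the bound $|I|\le n-3=-1$ from the theorem statement, whereas the paper instead computes the hyperplane $W(D_4,\emptyset)=\{w_1+w_2=3\}$ and observes it is a facet of the pentagon; both arguments reach the same conclusion.
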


\begin{proof}
    We evaluate the wall crossings of Theorem \ref{thm:GIT_walls} in the case of $2$ points and $\gamma = 1$. We only list the walls, corresponding to subsets $I, J \subset \{1,2\}$, which lie in the interior of the linearization polytope. For instance, if $C$ has a $D_4$ singularity and $p_1$ and $p_2$ are nonsingular points of $C$ then $(C,p_1, p_2)$ is only semi-stable at the wall $W(D_4, \emptyset)$ given by $w_1 + w_2 = 3.$ But this is an outer wall, forming a facet of the pentagon $\Delta(\mc{C}_{2,3}).$
\end{proof}

 We recall Radu Laza's Thesis \cite{Laz06} in which he constructs a series of compactifications of the moduli space of \textit{degree $3$ pairs} consisting of a cubic curve and a line in $\Pj^2$  by taking the VGIT quotients of $\Pj(\Gamma(\Pj^2, \mc{O}(3))) \times \Pj(\Gamma(\Pj^2, \mc{O}(1))) $ by $SL(3)$, denoted $M^{1,3}_{pairs}(t)$ . He finds a VGIT chamber corresponding to the line bundle parameter $t= \frac{3}{2}- \epsilon$ such that $M^{1,3}_{pairs}(\frac{3}{2}- \epsilon)$ is the moduli space of pairs $(C,L)$ where $C$ has at worst isolated singularities of type $A_k$ and $L$ is a line intersecting $C$ transversely. Laza then considers the moduli spaces $M^{(1,3) lab}_{pairs}(t)$ of pairs of a plane cubic and a line, with labeled intersection. He uses a classical construction to prove that $M^{(1,3) lab}_{pairs}(\frac{3}{2}- \epsilon)$ is the coarse moduli space for cubic surfaces containing a marked Eckardt point and at worst $A_k$ singularities.
In our case, there is a chamber represented by $\mathbf{w}$ giving rise to a compact moduli space of plane cubics with two marked points $\overline{M}^{git}_{1, \mathbf{w}}$, which is isomorphic to $M^{(1,3) lab}_{pairs}(\frac{3}{2}- \epsilon)$ and hence isomorphic to $\Pj(1,2,2,3)$ \cite[Corollary 3.15]{Laz06}.

\begin{cor}\label{cor:FromM2toRadu}
    There exists an isomorphism
    \[
       \phi^{lab}:\mc{C}_{2,3}^s \slash_{L} SL(3) \rightarrow M^{(1,3)lab}_{pairs}\left(
       \frac{3}{2}- \epsilon
       \right) 
       \cong 
       \mathbb{P}(1,2,2,3)
    \]
    where $L$ is a line bundle corresponding to a vector $\mathbf{w}$ in the GIT chamber $\{ w_1 > 1, w_2 > 1, w_1 + w_2 < 3\} \subset \Delta(\mc{C}_{2,3})$.
\end{cor}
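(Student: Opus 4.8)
The plan is to exhibit the isomorphism as the descent to GIT quotients of an explicit $SL(3)$-equivariant birational isomorphism of parameter spaces, and then to invoke Laza's computation of the target. First I would make precise the map already sketched in the introduction, namely the $SL(3)$-equivariant rational map
\[
\Phi\colon \mc{C}_{2,3} \dashrightarrow \Pj(\Gamma(\Pj^2,\mc{O}(3))) \times \Pj(\Gamma(\Pj^2,\mc{O}(1))), \qquad (C,p_1,p_2) \mapsto (C,\overline{p_1p_2}),
\]
together with the labeling of $C\cap\overline{p_1p_2}=\{p_1,p_2,p_3\}$ by the order $(p_1,p_2,p_3)$, where $p_3$ is the residual intersection point produced by B\'ezout. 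Its candidate inverse $\Psi$ sends a labeled pair $(C,\ell,(q_1,q_2,q_3))$ to $(C,q_1,q_2)$; since $q_1\neq q_2$ lie on $\ell$ we recover $\overline{q_1q_2}=\ell$ and $p_3=q_3$, so $\Psi\circ\Phi$ and $\Phi\circ\Psi$ are the identity wherever defined. Both maps are manifestly equivariant, and since an ordering of two of three collinear points is the same datum as an ordering of all three, $\Phi$ matches the six-fold labeling cover on Laza's side with the ordering of $(p_1,p_2)$ on ours.

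Next I would pin down the relevant loci. Using Theorem \ref{thm:GIT_walls} (equivalently Corollary \ref{cor:2pts}) I would check that the region $\{w_1>1,\ w_2>1,\ w_1+w_2<3\}$ meets none of the walls $w_i=\tfrac12 w_j$, $4(w_1+w_2)=3$, $w_i=\tfrac45 w_j-\tfrac35$, $w_1+w_2=1$, $w_j=1$, so it is a single open VGIT chamber; hence $\mc{C}_{2,3}^{ss}(L)=\mc{C}_{2,3}^{s}(L)$ there and the quotient is a geometric (orbit-space) quotient \cite{DH98}, \cite{Tha96}. Reading off the wall-crossing directions in Corollary \ref{cor:2pts}, the stable marked curves in this chamber are exactly those with $p_1\neq p_2$, with neither point at a node (since $w_i>1>\tfrac12 w_j$) nor a cusp (since $w_i>1>\tfrac45 w_j-\tfrac35$), and with $\overline{p_1p_2}$ not tangent to $C$ at a marked point (the walls $w_1=1$ and $w_2=1$). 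A short geometric lemma then shows that for distinct smooth $p_1,p_2$ a line through them cannot pass through a singular point, nor be tangent at $p_3$, without forcing two of the three intersection points to collide; thus stability forces $\overline{p_1p_2}$ to meet $C$ transversely at three distinct smooth points, while $C$ itself may carry at worst $A_k$ singularities (the $D_4$ locus never occurs for $n=2$, as $W(D_4,I)$ requires $|I|\le n-3$). This is precisely Laza's description of the $t=\tfrac32-\epsilon$ stable locus of labeled pairs.

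With the stable loci identified, $\Phi$ and $\Psi$ restrict to mutually inverse $SL(3)$-equivariant isomorphisms between $\mc{C}_{2,3}^{s}(L)$ and Laza's stable locus of labeled pairs; the indeterminacy of $\Phi$ (where $p_1=p_2$) and of $\Psi$ (where the labeled points collide) lies entirely in the respective unstable loci, so after restriction both are genuine morphisms. Passing to geometric quotients, which coincide with the GIT quotients because $\mathrm{ss}=\mathrm{s}$ on both open chambers, the equivariant isomorphism descends to an isomorphism $\phi^{lab}\colon \mc{C}_{2,3}^{ss}\sslash_L SL(3)\xrightarrow{\ \sim\ } M^{(1,3)lab}_{pairs}(\tfrac32-\epsilon)$. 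Finally I would cite Laza's thesis \cite{Laz06} for the identification $M^{(1,3)lab}_{pairs}(\tfrac32-\epsilon)\cong\Pj(1,2,2,3)$, which completes the proof.

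The hard part will be the stability matching in the second step: our stability is governed by the Hilbert--Mumford computations underlying Theorem \ref{thm:GIT_walls}, whereas Laza's is phrased through singularity type and transversality, so the work is to translate between the two languages and to confirm that \emph{every} degenerate cubic (cuspidal $A_2$, tacnodal $A_3$ arising as a conic plus tangent line, reducible nodal configurations, and all non-reduced curves) is sorted to the same side on both constructions. The key technical input making the two conditions coincide is the elementary intersection-theoretic lemma that a line through two distinct smooth points of a plane cubic meets it transversely in three distinct smooth points away from the excluded tangencies.
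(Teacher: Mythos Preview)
Your proposal is correct and reaches the result by a route genuinely different from the paper's. The paper constructs only the forward map $\psi:(C,p_1,p_2)\mapsto(C,\overline{p_1p_2})$, checks that its image lands in Laza's semistable locus, descends via the universal property of the categorical quotient, lifts to the labeled version $\phi^{lab}$, and then finishes by observing that $\phi^{lab}$ is a birational morphism bijective on closed points onto the normal variety $\Pj(1,2,2,3)$, invoking Zariski's Main Theorem. You instead build the explicit inverse $\Psi$ at the level of parameter spaces, match the two stable loci exactly, use $ss=s$ in an open chamber to identify the GIT quotients with geometric quotients, and descend the equivariant isomorphism directly. Your route is more elementary in that it avoids Zariski's Main Theorem, but the price is the two-sided stability match: you must also verify that every Laza-stable labeled pair $(C,\ell,(q_1,q_2,q_3))$ yields an $L$-stable $(C,q_1,q_2)$, whereas the paper only needs the forward inclusion together with normality of the target.

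Two points to tighten in your stability match. First, your B\'ezout lemma tacitly assumes $\overline{p_1p_2}$ is not a component of $C$; when $C$ is reducible with linear component $\ell$ and both $p_1,p_2$ are smooth points of $\ell$, B\'ezout does not apply. A direct numerical-criterion check (send $\ell$ to $\{z=0\}$ and $p_1$ to $(1:0:0)$, so $\Xi_{max}(C)=\{x^2z\}$ and $\mu^L(\cdot,\lambda_1)=w_1+w_2>0$) disposes of this case. Second, your remark that ``the $D_4$ locus never occurs for $n=2$, as $W(D_4,I)$ requires $|I|\le n-3$'' is a statement about the absence of interior \emph{walls}, not immediately about instability; you still need a word on why three concurrent lines and the non-reduced cubics are unstable throughout the chamber (this follows because the only place such curves are semistable is the outer facet $w_1+w_2=3$, and you are strictly below it).
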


\begin{proof}[Proof of Corollary \ref{cor:FromM2toRadu}]
    First we construct a morphism $\psi: \mc{C}_{2,3}^{s}(L) \rightarrow \Pj(\Gamma(\Pj^2, \mc{O}(3))) \times \Pj(\Gamma(\Pj^2, \mc{O}(1)))$ by associating to a marked cubic $(C,p_1, p_2)$ the degree $3$ pair $(C, \overline{p_1p_2}).$ This can be done explicitly: if $p_1 = (x_1:y_1:z_1)$ and $p_2 = (x_2:y_2:z_2)$ then the line $\overline{p_1p_2}$ is given by the equation $\{ (x_2z_1 - x_1z_2)(z_1y-y_1z) = (y_2z_1 - y_1z_2)(z_1x - x_1z) \}$. Suppose $(C,p_1,p_2)$ is $L$ (semi-)stable. Since $w_1$ and $w_2$ are greater than $1$, the stability condition associated to the walls $W(A_3, \emptyset, \{i\})$ imply that neither point $p_i$ can lie on a line that is tangent to the curve $C$ at the other point $p_j$. We also know from the wall-crossing behavior associated to the walls $W(3A_1, \{i\})$ and $W(A_2, \{i\})$ that neither of the points $p_i$ may coincide with a singularity of $C$. Therefore, the $L$ semi-stability of $(C,p_1,p_2)$ implies that $C$ is a cubic curve only containing singularities of type $A_k$  with $k\in \{1,2, 3 \}$ and that the line $\overline{p_1p_2}$ always intersects $C$ transversely at three distinct, smooth points. Recalling Laza's GIT analysis \cite[Theorem 3.2]{LazaThesis}, this implies that the degree $3$ pair $(C, \overline{p_1p_2})$ is $\frac{3}{2} - \epsilon$ semi-stable.

    Since the image of $\psi$ is contained in $\big( \Pj(\Gamma(\Pj^2, \mc{O}(3))) \times \Pj(\Gamma(\Pj^2, \mc{O}(1))) \big)^{ss}(\frac{3}{2}- \epsilon), $ we can restrict to this codomain and then compose with the quotient map to get a morphism $\psi':  \mc{C}_{2,3}^{ss}(L) \rightarrow M^{1,3}_{pairs}(\frac{3}{2}- \epsilon).$ This map $\psi'$ is clearly $SL(3)$ invariant: if $(C, p_1,p_2) = g \cdot (D, q_1, q_2)$ then $(C,\overline{p_1p_2}) = g \cdot (D, \overline{p_1,p_2})$, so their image under $\psi'$ is the same orbit. Then, by the universal property of $\pi: \mc{C}_{2,3}^{s}(L) \rightarrow \mc{C}_{2,3}^s \slash_L SL(3)$ as a quotient, there is a morphism $\phi: \mc{C}_{2,3}^s \slash_L SL(3) \rightarrow  M^{1,3}_{pairs}(\frac{3}{2}- \epsilon)$ such that $\psi'$ factors as $ \phi \circ \pi.$

    The above map is 6:1 because for a generic degree $3$ pair $(C,L)$ there are $3 \cdot 2$ choices of ordered pairs of points in the set $C \cap L$ that map to $(C,L)$. However, we recall from Laza's construction  (Lemma 3.13 in \cite{LazaThesis}) that we can label the $3$ points of intersection in $C \cap L$ to obtain an isomorphism from the moduli space $M^{(1,3)lab}_{pairs}(\frac{3}{2}- \epsilon)\cong \mathbb{P}(1,2,2,3)$ of \textit{degree 3 pairs with labeled intersections} to the moduli space of cubic surfaces with a marked Eckardt point.
    
    We lift $\phi$ to $\phi^{lab}: \mc{C}_{2,3}^s \slash_L SL(3) \rightarrow M^{(1,3)lab}_{pairs}(\frac{3}{2}- \epsilon)$ by ordering the three intersections in $\phi((C,p_1, p_2))$ as $(p_1, p_2, q)$ where $q$ is the third intersection of $C$ with $\overline{p_1p_2}$. We observe that $\phi^{lab}$ is a birational morphism, bijective on closed points. Its target is the weighted projective space $ M^{(1,3)lab}_{pairs}(\frac{3}{2}- \epsilon) \cong \mathbb{P}(1,2,2,3)$, which is normal. A form of Zariski's Main Theorem \cite[Corollary 12.88]{Gortz10}
    then implies $\phi^{lab}$ is an isomorphism.
\end{proof}

\begin{rmk}
We do not expect the map $(C, p_1, p_2) \mapsto (C, \overline{p_1p_2})$ to extend when using other line bundles in the moduli of plane cubics and two points. Indeed, for every other pair of chambers $X$ and $Y$ in the linearization polytopes of $\mc{C}_{2,3}$ and degree $3$ pairs respectively, we can find a tuple $(C, p_1, p_2) $ which is stable with respect to $X$, but for which $(C, \overline{p_1p_2})$ is unstable with respect to $Y$.
\end{rmk}

\subsection{Plane Cubics and \texorpdfstring{$M_{1,n}$}{M1,n}} \label{sec:planecubicsM1n}
It is well known that a smooth cubic curve $ V (f) $ has nine 
inflection points, which can be obtained by intersecting the vanishing locus of $f$ and the vanishing locus of its Hessian determinant $He(f)$. To form the parameter space for cubic plane curves marked with $n$ 
points and an inflection point, we define a subvariety of the product of $\Pj(\Gamma(\mc{O}(3),\Pj^2))$ with $n+1$ copies of $\Pj^2$:
$$\mc{C}_{n,d}' :=\{
(f, p_1, \ldots, p_{n+1}) \in \mathbb{P}^9 \times (\mathbb{P}^2)^{n+1} \; | \;
f(p_i)=0, \; He(f)(p_{n+1)}) =0 \}.
$$

 $\mc{C}_{n,3}'$ is therefore a closed subscheme of $\mc{C}_{n+1,3}$ and by the Hilbert-Mumford numerical criterion, a marked curve $(C, p_1, \dots , p_n) \in \mc{C}_{n,3}'$ is $L$ (semi-)stable if and only if it is $L$ (semi-)stable in $\mc{C}_{n+1,3}$. We therefore have the following commuting square:
$$
\begin{tikzcd}
    \mc{C}_{n,3}^{' ss}(L) \arrow[r, hook] \arrow[d] & \mc{C}_{n+1,3}^{ss}(L) \arrow[d] \\
    \mc{C}_{n,3}' \sslash_L SL(3) \arrow[r, hook] & \mc{C}_{n+1,3} \sslash_L SL(3) .
\end{tikzcd}
$$

\begin{cor}\label{thm:M_1n}
    $M_{1,n+1}$ is isomorphic to an open subset of $\mc{C}_{n,3}^{'s} \slash_L SL(3)$ for some $SL(3)-$linearized line bundle $L$.
\end{cor}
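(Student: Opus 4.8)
The plan is to realize each point of $M_{1,n+1}$---a smooth genus one curve $C$ with $n+1$ distinct marked points $x_0, x_1, \dots, x_n$---as a marked plane cubic by using $x_0$ as the origin of the group law. Treating $(C,x_0)$ as an elliptic curve, the complete linear system $|3x_0|$ is very ample of degree $3$ and embeds $C$ into $\Pj^2$ as a smooth plane cubic for which the image of $x_0$ is an inflection point; this embedding is canonical up to a choice of basis of $H^0(C,\mc{O}_C(3x_0))$, that is, up to $PGL(3)$. Setting $p_{n+1}$ to be the image of $x_0$ and $p_i$ the image of $x_i$ for $1\le i\le n$, I obtain a well-defined $SL(3)$-orbit of a tuple $(C,p_1,\dots,p_n,p_{n+1})\in\mc{C}_{n,3}'$, since $SL(3)\twoheadrightarrow PGL(3)$ absorbs the ambiguity. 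On points this assignment is a bijection onto the orbits of tuples in $\mc{C}_{n,3}'$ whose underlying curve is smooth and whose $n+1$ marked points are distinct, with inverse sending such an orbit to the abstract pointed curve $(C,p_{n+1},p_1,\dots,p_n)$; here I use that $p_{n+1}$ is forced to be a flex by the Hessian condition defining $\mc{C}_{n,3}'$, so it is available as an origin.

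Second, I would fix a linearization making every such configuration stable. Take $\gamma=1$ and weights $w_1,\dots,w_n$ positive, pairwise distinct, and small; by Theorem \ref{thm:ample_cone} such an $L$ lies in the interior of $\Lambda(\mc{C}_{n,3})$, and choosing the $w_i$ generic keeps $L$ off every wall of Table \ref{table:GITwalls}, so semistability equals stability at $L$. The key estimate is that for smooth $C$ the tuple is stable once $W=\sum_i w_i$ is small enough, \emph{regardless of the position of the points}: since a smooth plane cubic is $SL(3)$-stable, the Numerical Criterion (Theorem \ref{theorem:NC}) yields a uniform $\delta>0$ with $\max\{\langle m,\lambda_r\rangle : m\in\Xi(g\cdot C)\}\ge\delta$ for all $g\in SL(3)$ and $r\in[-\tfrac12,1]$ (finitely many support sets, each giving a positive piecewise-linear function on the compact interval). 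As each point contributes $w_i\min\{\langle x_i,\lambda_r\rangle\}\le w_i$ to the function in equation \ref{eq:mu}, I get $\mu^{L}(g\cdot(C,p_1,\dots,p_{n+1}),\lambda_r)\le W-\delta<0$ whenever $W<\delta$. Hence the entire image of the assignment above lands in the \emph{stable} locus $\mc{C}_{n,3}^{'s}(L)$, where the quotient is geometric.

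Third, I would upgrade the bijection to a morphism of varieties using the universal family $\pi:\mc{E}\to M_{1,n+1}$ with its first section $\sigma_0$. The sheaf $\mc{V}:=\pi_*\mc{O}_{\mc{E}}(3\sigma_0)$ is locally free of rank $3$ by cohomology and base change, and $\mc{O}_{\mc{E}}(3\sigma_0)$ gives a closed embedding $\mc{E}\hookrightarrow\Pj(\mc{V})$ over the base realizing $\sigma_0$ fiberwise as an inflection point. Over an open set $U$ trivializing the $\Pj^2$-bundle $\Pj(\mc{V})$, the family together with its $n+1$ sections is a family of marked plane cubics, i.e.\ a morphism $U\to\mc{C}_{n,3}'$, which I compose with the quotient map. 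On overlaps the two local morphisms differ by the $GL(3)$-valued transition functions of $\mc{V}$, which act through $PGL(3)=SL(3)/\mu_3$; their composites with $\pi:\mc{C}_{n,3}^{'s}(L)\to\mc{C}_{n,3}^{'s}\sslash_L SL(3)$ therefore agree, so the local morphisms glue to a morphism $\Phi:M_{1,n+1}\to\mc{C}_{n,3}^{'s}\sslash_L SL(3)$ (descending from the moduli stack to the coarse space, since the target is a scheme and kills automorphisms).

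Finally, $\Phi$ is injective on points by the reconstruction in the first paragraph, and its image is exactly the locus of stable orbits with smooth underlying curve and distinct marked points, which is open (smoothness and distinctness are open conditions, and the quotient map is open on the stable locus). Either constructing the inverse directly---over the open image the universal marked cubic returns an $(n+1)$-pointed smooth genus one curve---or invoking Zariski's Main Theorem exactly as in the proof of Corollary \ref{cor:FromM2toRadu} (both spaces being normal and $\Phi$ bijective onto its open image) shows that $\Phi$ is an open immersion. I expect the main obstacle to be the third step: carrying the $|3\sigma_0|$-embedding out in families and descending the resulting $PGL(3)$-ambiguous morphism to the coarse space $M_{1,n+1}$, rather than the stability estimate, which is essentially forced by the classical stability of smooth cubics.
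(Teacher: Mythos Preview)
Your argument is correct but runs in the opposite direction from the paper's. The paper builds the map \emph{from} the GIT side: on the open locus $\mc{C}_{n,3}'^{\circ}$ of smooth cubics with $n$ distinct points and a marked flex, the trivial $\Pj^2$-bundle carries a tautological family of pointed genus one curves, giving a morphism $\mc{C}_{n,3}'^{\circ}\to M_{1,n+1}$ that is visibly $SL(3)$-invariant and hence descends to $\overline{\phi}:\mc{C}_{n,3}'^{\circ}\sslash_L SL(3)\to M_{1,n+1}$. Surjectivity is then checked via the $|3a_{n+1}|$-embedding exactly as in your first paragraph, and injectivity is obtained by an explicit automorphism count (the two automorphisms of a generic $(E,p)$ are both realized linearly in Weierstrass coordinates), after which Zariski's Main Theorem applies with the \emph{known} normality of $M_{1,n+1}$ as target. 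The advantage of this direction is that the morphism comes for free---no cohomology and base change, no $PGL(3)$-gluing, no descent from the stack---while the price is the slightly ad hoc injectivity argument. Your route trades these: your bijection on points is cleaner (the inverse is manifest), and your stability bound via the uniform positivity of $\max_{m\in\Xi(g\cdot C)}\langle m,\lambda_r\rangle$ for smooth $C$ is more self-contained than the paper's appeal to the wall-crossing theorem, but you then have to carry out the relative $|3\sigma_0|$-embedding and the stack-to-coarse-space descent you flag as the main obstacle. Two small slips to fix: the linearization lives on $\mc{C}_{n+1,3}$, so you need weights $w_1,\dots,w_{n+1}$ (not just $w_1,\dots,w_n$) and the cone is $\Lambda(\mc{C}_{n+1,3})$; and if you invoke Zariski's Main Theorem in your direction you should note that $\mc{C}_{n,3}'^{\circ}$ is smooth (hence its geometric quotient is normal), which follows since for a smooth cubic the nine flexes are reduced.
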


\begin{proof}[Proof of Theorem \ref{thm:M_1n}]
Let $\mc{C}_{n,3}'^{\circ} \subset \mc{C}_{n,d}$ be the open locus of smooth plane cubics marked with $n$ points and an additional inflection point which are all distinct. For $0 < \epsilon \ll 1$, the entirety of $\mc{C}_{n,3}^{' \circ}$ is $(1, \epsilon, \dots , \epsilon)$ stable by Theorem \ref{thm:wallcrossing}. Let $\Pj^2 \times \mc{C}_{n,3}^{' \circ} \rightarrow \mc{C}_{n,3}^{' \circ} $ be the trivial $\Pj^2$ bundle. Let $B \hookrightarrow \Pj^2 \times \mc{C}_{n,3}^{' \circ}$ be the family of curves over $\mc{C}_{n,3}^{' \circ} $ whose fiber over $(C, p_1, \dots, p_{n+1})$ is the curve $C$ in the plane $\Pj^2 \times \{(C, p_1, \dots, p_{n+1})\} $. The family of genus $1$ curves $B$ and the $n+1$ global sections given by the distinct marked points define a map $\phi: \mc{C}_{n,3}^{' \circ} \rightarrow M_{1,n+1} $. If $(C, p_1, \dots, p_{n+1})$ and $(D, q_1, \dots, q_{n+1})$ are projectively equivalent then their images under $\phi$ are isomorphic marked curves. Therefore $\phi$ factors through the categorical quotient $\mc{C}_{n,3}^{' \circ} \slash_L SL(3)$.

$$
\begin{tikzcd}
    B \arrow[r, hook] \arrow[dr] & \Pj^2 \times \mc{C}_{n,3}^{' \circ} \arrow[d] & \\
    & \mc{C}_{n,3}^{' \circ}  \arrow[r, "\phi"] \arrow[d] & M_{1,n+1} \\
    & \mc{C}_{n,3}^{' \circ} \slash_L SL(3) \arrow[ur, "\overline{\phi}"] & 
\end{tikzcd}
$$

 We show that $\overline{\phi}$ is an isomorphism by showing that it is a surjective birational morphism and invoking Zariski's Main Theorem. Let $(E,a_1, \dots, a_{n+1}) \in M_{1,n+1}$ be a marked elliptic curve. The linear system $|3a_{n+1}|$ induces an embedding $\psi_{|3a_{n+1}|}$ of $(E,a_1, \dots, a_{n+1})$ into the plane as a marked cubic curve such that the image of $a_{n+1}$ is an inflection point and $\overline{\phi}(\psi_{|3a_{n+1}|}(E),\psi_{|3a_{n+1}|}(a_1), \dots, \psi_{|3a_{n+1}|}(a_{n+1})) \cong (E,a_1, \dots, a_{n+1}).$ Therefore $\overline{\phi}$ is surjective. Now suppose $\overline{\phi}(C,p_1, \dots , p_n) = \overline{\phi} (D, q_1, \dots , q_n).$ Then as abstract marked curves $(C,p_1, \dots , p_{n+1})$ is isomorphic to $ (D, q_1, \dots , q_{n+1})$. If the $j$-invariant of $C \cong D$ is not $0$ or $1728$ then there are exactly two automorphisms of $D$ fixing $q_{n+1}$ \cite[Corollary IV.4.7]{Hart77}. It follows that there are exactly two isomorphisms $(C,p_{n+1}) \cong (D,q_{n+1}).$ We also know that any plane cubic has two linear automorphisms fixing any inflection point: Let $g\in SL(3)$ take $(D,q_{n+1})$ to its Weierstrass form $y^2z = x^3 + axz^2 + bz^3$ such that $g \cdot (q_{n+1}) = (0:1:0).$ Then reflection across the line $y=0$ is a linear automorphism of $g(D,q_{n+1})$  given by the matrix $\diag \{1, -1, 1\}$. This shows that the abstract isomorphism $(C,p_1, \dots , p_{n+1}) \cong (D, q_1, \dots , q_{n+1})$ is one of the two isomorphisms sending $(C,p_{n+1})$ to $(D,q_{n+1})$, both of which are projective equivalences. Therefore $(C,p_1, \dots , p_{n+1})$ is projectively equivalent to $ (D, q_1, \dots , q_{n+1})$, so they are the same point of $\mc{C}_{n,d}^{' \circ} \slash SL(3).$ Then $\overline{\phi}$ is injective on the locus of curves whose $j-$invariant is not $0$ or $1728$. Since $\overline{\phi}$ is a morphism to a normal complex variety which is bijective on an open subset, it is a birational morphism \cite{Starrstack}.

Then, because $M_{g,n}$ is normal and $\overline{\phi}$ is a birational surjection with finite fibers, it follows from Zariski's Main Theorem \cite[Corollary 12.88]{Gortz10} that $\overline{\phi}$ is in fact an isomorphism.
\end{proof}

\printbibliography

\end{document}